%
\documentclass[11pt]{amsart}
\usepackage[margin=1in,letterpaper,portrait]{geometry}
\usepackage{ytableau}
\ytableausetup{notabloids}
\ytableausetup{centertableaux}
\usepackage{latexsym,amsmath,amssymb,verbatim, tikz}
\usepackage{graphicx}
\usepackage{hyperref}

\theoremstyle{plain}
   
   \newtheorem{theorem}{Theorem}[section]
   \newtheorem{proposition}[theorem]{Proposition}
   
   \newtheorem{lemma}[theorem]{Lemma}
   \newtheorem{corollary}[theorem]{Corollary}
   
   \newtheorem{problem}[theorem]{Problem}
\theoremstyle{definition}
   \newtheorem{definition}[theorem]{Definition}
   \newtheorem{defn}[theorem]{Definition}
   \newtheorem{example}[theorem]{Example}

   \newtheorem{remark}[theorem]{Remark}

\numberwithin{equation}{section}

\newcommand\comp[2]{\alpha{(#2,#1)}}
\newcommand\ccomp[2]{\alpha^\cyc{(#2,#1)}}

\newcommand{\ZZ}{{\mathbb {Z}}}

\newcommand{\Cyl}{\operatorname{Cyl}}
\newcommand{\Tor}{\operatorname{Tor}}

\newcommand{\Des}{{\operatorname{Des}}}
\newcommand{\cDes}{{\operatorname{cDes}}}
\newcommand{\cyc}{{\operatorname{cyc}}}
\newcommand{\cdes}{{\operatorname{cdes}}}
\newcommand{\des}{{\operatorname{des}}}
\newcommand{\ch}{{\operatorname{ch}}}

\newcommand{\SYT}{{\operatorname{SYT}}}

\newcommand{\cc}{{\operatorname{cc}_n}}

\newcommand{\cs}{{\tilde{s}}}

\newcommand{\xx}{{\mathbf{x}}}
\newcommand{\ttt}{{\mathbf{t}}}
\newcommand{\symm}{{\mathfrak{S}}}

\newcommand{\OOO}{{\mathcal{O}}}
\newcommand{\TTT}{{\mathcal{T}}}

\newcommand{\SteinbergTorus}{{\widetilde{\Delta}}}

\newcommand{\wcDes}{{\cDes_*}}
\newcommand{\wcdes}{{\cdes_*}}

\newlength{\mysizetiny}
\setlength{\mysizetiny}{0.3em}
\newlength{\mysizesmall}
\setlength{\mysizesmall}{0.8em}
\newlength{\mysize}
\setlength{\mysize}{1.3em}
\newlength{\mysizelarge}
\setlength{\mysizelarge}{2em}

\begin{document}

\title[On cyclic descents for tableaux]
      {On cyclic descents for tableaux}

\author{Ron M.\ Adin}
\address{Department of Mathematics, Bar-Ilan University, Ramat-Gan 52900, Israel}
\email{radin@math.biu.ac.il}
\author{Victor Reiner}
\address{School of Mathematics, University of Minnesota,
Minneapolis, MN 55455, USA}
\email{reiner@math.umn.edu}
\author{Yuval Roichman}
\address{Department of Mathematics, Bar-Ilan University, Ramat-Gan 52900, Israel}
\email{yuvalr@math.biu.ac.il}

\date{November 22, 2017}

\thanks{First and third authors partially supported by a MISTI MIT-Israel grant.
Second author partially supported by NSF grant DMS-1601961.}

\begin{abstract}
The notion of descent set, for permutations as well as for standard Young tableaux (SYT), is classical.
Cellini introduced a natural notion of {\em cyclic descent set} for permutations, and Rhoades introduced such a notion for SYT --- but only for rectangular shapes.
In this work we define {\em cyclic extensions} of descent sets in a general context, 
and prove existence and essential uniqueness for SYT of almost all shapes. 
The proof applies nonnegativity properties of Postnikov's toric Schur polynomials, providing a new interpretation of certain Gromov-Witten invariants.
%
%
%
\end{abstract}

\keywords{Descent, cyclic descent, standard Young tableau, ribbon Schur function, Gromov-Witten invariant}

\maketitle
\tableofcontents

\section{Introduction}

For a permutation $\pi = [\pi_1, \ldots, \pi_n]$ 
in the symmetric group $\symm_n$ on $n$ letters, one defines its {\em descent set} as
\[
\Des(\pi) := \{1 \le i \le n-1 \,:\, \pi_i > \pi_{i+1} \}
\quad \subseteq [n-1],
\]
where $[m]:=\{1,2,\ldots,m\}$.  For example, $\Des([2,1,4,5,3])=\{1,4\}$. On the other hand, its
{\em cyclic descent set} was defined by Cellini~\cite{Cellini} as
\begin{equation}
\label{e.cellini}
\cDes(\pi) := \{1 \leq i \leq n \,:\, \pi_i > \pi_{i+1} \}
\quad \subseteq [n],
\end{equation}
with the convention $\pi_{n+1}:=\pi_1$.  For example, $\cDes([2,1,4,5,3])=\{1,4,5\}$.
This cyclic descent set was further studied by Dilks, Petersen, Stembridge~\cite{DPS} and others.
It has the following important properties.
Consider the two $\ZZ$-actions, on $\symm_n$ and on the power set of $[n]$, in which the generator $p$ of $\ZZ$ acts by 
\[
\begin{array}{rcl}
[\pi_1,\pi_2,\ldots,\pi_{n-1},\pi_n]&\overset{p}{\longmapsto}&
[\pi_n,\pi_1,\pi_2,\ldots,\pi_{n-1}], \\
\{i_1,\ldots,i_k\}&\overset{p}{\longmapsto}&\{i_1+1,\ldots,i_k+1\} \bmod{n}.
\end{array}
\]
Then for every permutation $\pi$, one has these three properties:
\begin{eqnarray}
\label{restriction-property}
&\cDes(\pi) \cap [n-1] = \Des(\pi)&\quad \text{(extension)} \\
\label{equivariance-property}
&\cDes(p(\pi))  = p(\cDes(\pi)) &\quad \text{(equivariance)} \\
\label{non-Escher-property}
&\varnothing \subsetneq \cDes(\pi) \subsetneq [n] &\quad \text{(non-Escher)} 
\end{eqnarray}
The term {\em non-Escher} refers to M.\ C.\ Escher's drawing ``Ascending and Descending'', which paradoxically depicts the impossible cases 
$\cDes(\pi)=\varnothing$ and $\cDes(\pi)=[n]$.

\medskip
There is also an established notion of descent set for 
{\em standard (Young) tableau} $T$ of a skew shape $\lambda/\mu$:
\[
\Des(T) := \{1 \le i \le n-1 \,:\,
            i+1 \text{ appears in a lower row of }T\text{ than }i\}
\quad \subseteq [n-1].
\]
For example, this standard Young tableau $T$ of shape $\lambda/\mu=(4,3,2)/(1,1)$ 
has $\Des(T)=\{2,3,5\}$:
\[
\ytableausetup{boxsize=\mysize}
\begin{ytableau}
\none& 1 & 2 & 7 \\
\none& 3& 5 \\
   4 &6 \\
\end{ytableau}
\]
For the special case of standard tableaux $T$ of {\em rectangular} shapes,
Rhoades \cite[Lemma 3.3]{Rhoades} introduced a notion of 
{\em cyclic descent set} $\cDes(T)$, having the same properties
\eqref{restriction-property}, \eqref{equivariance-property} and \eqref{non-Escher-property}
with respect to the $\ZZ$-action in which the generator $p$ acts on tableaux via 
Sch\"utzenberger's {\em jeu-de-taquin promotion} operator. 
A similar concept of $\cDes(T)$ and accompanying action $p$ was 
introduced for two-row partition shapes and certain other skew shapes 
(see Subsection~\ref{known-examples-subsection} for the list) 
in~\cite{AER, ER1}, 
and used there to answer Schur positivity questions. 

\medskip

Our first main result is a necessary and sufficient condition for the existence of a cyclic extension $\cDes$ of the descent map $\Des$ on the set $\SYT(\lambda/\mu)$ of standard Young tableaux of shape $\lambda/\mu$,
with an accompanying $\ZZ$-action on $\SYT(\lambda/\mu)$ via an operator $p$,
satisfying properties 
\eqref{restriction-property}, \eqref{equivariance-property} and \eqref{non-Escher-property}.
In this story, a special role is played by the skew shapes known as
{\em ribbons} (connected skew shapes containing no $2 \times 2$ rectangle),
and in particular {\em hooks} (straight ribbon shapes, namely $\lambda=(n-k,1^k)$ for $k=0,1,\ldots,n-1$).
Early versions of~\cite{AER} and~\cite{ER1} conjectured the following result. 

\begin{theorem}\label{conj1}
Let $\lambda/\mu$ be a skew shape.
The descent map $\Des$ on $\SYT(\lambda/\mu)$ has a cyclic extension $(\cDes,p)$ if and only if $\lambda/\mu$ is not a connected ribbon.
Furthermore, for all $J \subseteq [n]$, all such cyclic extensions share the same cardinalities $\#\cDes^{-1}(J)$.
\end{theorem}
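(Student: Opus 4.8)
The plan is to recast Theorem~\ref{conj1} as a single positivity property of the skew Schur function $s_{\lambda/\mu}$. Write $n:=|\lambda/\mu|$ and, for $I\subseteq[n-1]$, put $d_I:=\#\{T\in\SYT(\lambda/\mu):\Des(T)=I\}$, so that $s_{\lambda/\mu}=\sum_{I\subseteq[n-1]}d_I\,F_I$ in Gessel's basis of fundamental quasisymmetric functions. Since $\cDes(T)\cap[n-1]=\Des(T)$ is forced, a candidate $\cDes$ is simply a choice, for each $T$, of whether $n\in\cDes(T)$; one checks that an accompanying operator $p$ with $\cDes(p(T))=p(\cDes(T))$ exists if and only if the integers $\#\cDes^{-1}(J)$ are invariant under the rotation $J\mapsto p(J)$ of subsets of $[n]$ (necessity is immediate; sufficiency follows by gluing bijections of the equinumerous fibers around each rotation-orbit). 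Conversely, any rotation-invariant family of nonnegative integers $(c_J)_{J\subseteq[n]}$ with $c_\varnothing=c_{[n]}=0$ and $c_I+c_{I\cup\{n\}}=d_I$ for all $I\subseteq[n-1]$ comes from such a pair, with $c_J=\#\cDes^{-1}(J)$. Encoding rotation-invariance into the functions $G_{[S]}:=\sum_{S'}F_{S'\cap[n-1]}$ (the sum over the distinct rotations $S'$ of $S$, with $[S]$ running over rotation-orbits of subsets of $[n]$), all of these constraints collapse into one identity: \emph{a cyclic extension $(\cDes,p)$ exists if and only if $s_{\lambda/\mu}=\sum_{[S]}c_{[S]}\,G_{[S]}$ for some nonnegative integers $c_{[S]}$ indexed by the orbits of nonempty proper subsets $\varnothing\subsetneq S\subsetneq[n]$, and then $\#\cDes^{-1}(J)=c_{[J]}$.}

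For the ``furthermore'' clause it thus suffices to prove that $\{G_{[S]}:\varnothing\subsetneq S\subsetneq[n]\}$ is linearly independent. The coefficient of $F_I$ in $G_{[S]}$ is the number of $S'\in\{I,\,I\cup\{n\}\}$ lying in the orbit $[S]$, so a relation $\sum a_{[S]}G_{[S]}=0$ amounts to $a_{[I]}+a_{[I\cup\{n\}]}=0$ for all $I\subseteq[n-1]$ (with $a_{[\varnothing]}=a_{[[n]]}=0$); since every nonempty subset of $[n]$ has a rotation containing $n$, one solves this by induction on $|S|$ and gets $a_{[S]}=0$ for all $S$. Hence the expansion of $s_{\lambda/\mu}$ in the $G_{[S]}$, if it exists, is unique, so all cyclic extensions share the same cardinalities; the same induction applied to the inhomogeneous system recovers the forced values $c_{[J]}=d_{J^\ast\setminus\{n\}}-c_{[J^\ast\setminus\{n\}]}$, where $J^\ast\in[J]$ contains $n$ and $c_{[\varnothing]}:=0$. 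A cyclic extension exists precisely when these forced integers are all $\ge 0$.

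\emph{Connected ribbons.} A single row or column gives a one-element $\SYT(\lambda/\mu)$, whose tableau $T$ has $\Des(T)=\varnothing$ or $[n-1]$, forcing the impossible $\cDes(T)\in\{\varnothing,[n]\}$. For a connected ribbon with at least two rows and two columns one has $d_\varnothing=d_{[n-1]}=0$ (the column and row conditions on an SYT preclude an empty or a full descent set), and the $F$-expansion $s_{\lambda/\mu}=\sum d_I F_I$ of the ribbon Schur function is classical. Inserting this data into the recursion above, I would check that it forces some $c_{[J]}<0$ --- equivalently, that no ribbon Schur function is a nonnegative combination of the $G_{[S]}$. I expect this to be an elementary but somewhat delicate computation, plausibly a short induction on the parts of the ribbon composition; it is the more technical part of the ``only if'' direction.

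\emph{Shapes that are not connected ribbons.} Here the task is to \emph{exhibit} a nonnegative expansion, and this is where Postnikov's toric Schur polynomials enter. Choose an embedding $\lambda\subseteq(m^r)$, set $N:=r+m$, and form the cylindric skew shape $\lambda/d/\mu$ for an appropriate degree $d\ge 1$. Its standard cylindric tableaux carry a genuine cyclic descent set and are permuted compatibly by the rotation automorphism of the cylinder (generalizing jeu-de-taquin promotion); grouping by orbit exhibits the associated toric Schur polynomial $s_{\lambda/d/\mu}$ as a nonnegative integer combination of functions $G_{[S]}$, while Postnikov's theorem identifies the coefficients that occur as (sums of) the $3$-point, genus-zero Gromov--Witten invariants $C^{\nu,d}_{\lambda\mu}$ of the Grassmannian $\mathrm{Gr}(r,N)$, which are nonnegative. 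Matching $s_{\lambda/d/\mu}$ --- after forgetting the cylindric structure --- with $s_{\lambda/\mu}$ then gives $s_{\lambda/\mu}=\sum c_{[S]}G_{[S]}$ with $c_{[S]}\ge 0$, hence a cyclic extension by the first paragraph, and simultaneously expresses each $\#\cDes^{-1}(J)$ as a sum of Gromov--Witten invariants (the promised new interpretation). The hypothesis that $\lambda/\mu$ is not a connected ribbon is exactly what allows one to choose $d$ and the embedding so that this matching is valid and the non-Escher condition is automatic; the disconnected case may instead be handled directly. The main obstacle I anticipate for the whole argument is precisely this identification of the toric Schur polynomial with the cyclic-descent generating function of $\SYT(\lambda/\mu)$, together with the attendant bookkeeping of the non-Escher constraint.
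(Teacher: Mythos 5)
Your opening reduction (choice of whether $n\in\cDes(T)$, equivariance $\Leftrightarrow$ rotation-invariant fiber sizes, uniqueness via linear independence of the $G_{[S]}$) is sound and is essentially Lemma~\ref{t.cyclic_extension} of the paper in quasisymmetric clothing, but with one caveat: existence is \emph{not} equivalent to ``all forced integers $\ge 0$''. The recursion with $c_{[\varnothing]}=0$ also determines $c_{[[n]]}$, and non-Escher requires this to vanish; for the single column $(1^n)$ every forced value is $0$ except $c_{[[n]]}=1$, so your stated criterion would wrongly predict existence there. Relatedly, in the ``only if'' direction you defer the substantive step (``I would check that it forces some $c_{[J]}<0$ \ldots a somewhat delicate computation''). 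That claim is true for connected ribbons other than a row or column (e.g.\ one can pair against $J=[h+1]$, where $h$ is the height, using that $\cs_{\ccomp{n}{[t]}}$ is an alternating sum of hook Schur functions and that a connected ribbon of height $h$ contains exactly one hook constituent $(n-h+1,1^{h-1})$, as in Lemma~\ref{lemma_prel2} and Proposition~\ref{t.Gromov-Witten-corollary}), but you neither carry it out nor supply the hook-multiplicity facts it needs; the paper instead avoids all of this with a short direct argument, producing a tableau $T_0$ with $\Des(T_0)=\{1,\ldots,h-1\}$ and showing that either choice of whether $n\in\cDes(T_0)$ contradicts equivariance.

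The serious gap is the ``if'' direction. You attach the cylindric structure to $\lambda/\mu$ itself: embed $\lambda$ in a rectangle, form $\lambda/d/\mu$ with $d\ge 1$, posit a cyclic descent statistic and a compatible rotation action on its standard cylindric tableaux, and then ``match'' $s_{\lambda/d/\mu}$ with $s_{\lambda/\mu}$. This cannot work as stated: for $d\ge 1$ the toric Schur polynomial $s_{\lambda/d/\mu}$ is homogeneous of degree $|\lambda|-|\mu|+dN>n$, so no coefficientwise matching with $s_{\lambda/\mu}$ is possible; and the rotation action on cylindric tableaux compatible with cyclic descents that you assume is exactly the kind of explicit construction that is not known in general (cf.\ Problem~\ref{natural-maps-problem}) --- assuming it begs the question the theorem is meant to settle non-constructively. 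The paper's use of Postnikov is dual to yours: the cylindric object is attached to the \emph{subset} $J$, not to $\lambda/\mu$. One proves $\cs_{\ccomp{n}{J}}=s_{\nu/1/\nu}-(-1)^{\#J}p_n$ for the cylindric \emph{ribbon} $\nu/1/\nu$ determined by $J$ inside a $t\times(n-t)$ rectangle ($t=\#J$, Proposition~\ref{toric-zigzag-conjecture}), deduces $\langle \cs_{\ccomp{n}{J}},s_\nu\rangle\ge 0$ for non-hook $\nu$ from Gromov--Witten positivity (Theorem~\ref{conj_a3}), and then must still treat the hook constituents of $s_{\lambda/\mu}$ --- which do occur when $\lambda/\mu$ is a disconnected generalized ribbon and for which the Grassmannian result says nothing, since hooks of size $n$ do not fit in the rectangle --- via Lemma~\ref{lemma_prel2}, Corollary~\ref{affine-ribbon-hook-pairing}, and alternating binomial identities, which also yield the required vanishing $\#\cDes^{-1}([n])=0$. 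Your proposal does not engage with this hook bookkeeping, yet that is precisely where the hypothesis ``not a connected ribbon'' is used.
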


Our strategy for proving Theorem~\ref{conj1} is inspired by a result of Gessel \cite[Theorem 7]{Gessel} that we recall here.
For a subset $J=\{j_1  < \ldots < j_t\} \subseteq [n-1]$, the composition (of $n$)
\begin{equation}\label{composition-of-set}
\comp{n}{J}:= (j_1,j_2-j_1,j_3-j_2,\ldots,j_t-j_{t-1},n-j_t)
\end{equation}
defines a {\em connected ribbon} having the entries of $\comp{n}{J}$ as row lengths, and thus an associated {\em (skew) ribbon Schur function} 
\begin{equation}\label{ribbon-skew-as-alternating-sum}
s_{\comp{n}{J}}
:= \sum_{\varnothing \subseteq I \subseteq J}
(-1)^{\#(J \setminus I)} h_{\comp{n}{I}}
\end{equation}
with the following property: 
for any skew shape $\lambda/\mu$, the
descent map $\Des : \SYT(\lambda/\mu) \longrightarrow 2^{[n-1]}$ 
has fiber sizes given by
\begin{equation}\label{Gessel-ribbon-descent-fact}
\# \Des^{-1}(J) =
\langle s_{\lambda/\mu}, s_{\comp{n}{J}} \rangle
\qquad (\forall J \subseteq [n-1]),
\end{equation}
where $\langle -,-\rangle$ is the usual inner product on symmetric functions. 

By analogy, for a subset $\varnothing \ne J = \{ j_1 < j_2 < \ldots < j_t \}
\subseteq [n]$
we define the corresponding {\em cyclic composition} of $n$  as
\begin{equation}\label{cyclic-composition-of-set}
\ccomp{n}{J} := (j_2-j_1, \ldots, j_t - j_{t-1}, j_1 + n - j_t),
\end{equation}
with $\ccomp{n}{J} := (n)$ when $J = \{j_1\}$;
note that $\ccomp{n}{\varnothing}$ is not defined.
The corresponding {\em affine (or cyclic) ribbon Schur function} is
then defined as
\begin{equation}\label{affine-ribbon-Schur-function}
\cs_{\ccomp{n}{J}} := \sum_{\varnothing \neq I \subseteq J}
(-1)^{\#(J \setminus I)} h_{\ccomp{n}{I}}.
\end{equation}
We then collect enough properties of 
this function
to show that there must exist 
a map $\cDes: \SYT(\lambda/\mu) \rightarrow 2^{[n]}$
and a $\ZZ$-action $p$ on $\SYT(\lambda/\mu)$,
as in Theorem~\ref{conj1}, 
such that fiber sizes are given by
\begin{equation}\label{cDes-fiber-sizes-as-inner-product}
\# \cDes^{-1}(J) =
\langle s_{\lambda/\mu}, \cs_{\ccomp{n}{J}} \rangle 
\qquad (\forall\ \varnothing \subsetneq J \subsetneq [n]).
\end{equation}
See Corollary~\ref{cor:fiber-sizes-as-inner-product} below.
The nonnegativity of this inner product when $\lambda/\mu$ is not a connected ribbon ultimately relies on relating
$\cs_{\ccomp{n}{J}}$ to a special case of Postnikov's {\it toric Schur polynomials},
with their interpretation in terms of {\em Gromov-Witten invariants}
for Grassmannians \cite{Postnikov}.

\medskip
We also compare the distribution of $\cDes$ on $\SYT(\lambda)$
to the distribution of $\cDes$ on $\symm_n$.
Recall \cite[Theorem 3.1.1 and \S 5.6 Ex.\ 22(a)]{Sagan} that the {\it Robinson-Schensted correspondence} is a bijection
between $\symm_n$ and the set of pairs of standard Young tableaux 
of the same shape $\lambda$ (and size $n$), 
having the property that if $w \mapsto (P,Q)$ then $\Des(w)=\Des(Q)$.
Consequently
\[
\sum_{w \in \symm_n} \ttt^{\Des(w)}
= \sum_{ \lambda \vdash n } f^\lambda \sum_{T \in \SYT(\lambda)} \ttt^{\Des(T)}.
\]
Here $\ttt^S:=\prod_{i \in S}t_i$ for   $S \subseteq \{1,2,\ldots\}$, while
$\lambda \vdash n$ means $\lambda$ is a partition of $n$,
and $f^\lambda:=\#\SYT(\lambda)$.
Note that Theorem~\ref{conj1} implies that any {\em non-hook} shape $\lambda$, as well as any disconnected skew shape $\lambda/\mu$, 
will have
$\sum_{T \in \SYT(\lambda/\mu)} \ttt^{\cDes(T)}$ well-defined and
independent of the choice of cyclic extension $(\cDes,p)$ for $\Des$ on $\SYT(\lambda)$.
We then have the following second main result.

\begin{theorem}\label{conj2}
For any $n \ge 2$
\[
\sum_{w \in \symm_n} \ttt^{\cDes(w)}
= \sum_{\substack{\text{non-hook}\\ \lambda \vdash n}}
f^\lambda \sum_{T \in \SYT(\lambda)} \ttt^{\cDes(T)}
\quad + \quad \sum_{k=1}^{n-1} \binom{n-2}{k-1} 
 \sum_{T \in \SYT((1^k)\oplus (n-k))} \ttt^{\cDes(T)},
\]
where $\cDes$ is defined on $\symm_n$ by Cellini's formula \eqref{e.cellini} and on standard Young tableaux (of the relevant shapes) as in Theorem~\ref{conj1}.
\end{theorem}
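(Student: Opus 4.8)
The plan is to express both sides of the identity as inner products of symmetric functions and then reduce to a known combinatorial identity. Using the Robinson–Schensted correspondence together with the $\cDes$-compatibility we aim to establish (the tableau analogue of the $\Des$-compatibility recalled in the excerpt), the left-hand side $\sum_{w \in \symm_n} \ttt^{\cDes(w)}$ should decompose as $\sum_{\lambda \vdash n} f^\lambda \sum_{T \in \SYT(\lambda)} \ttt^{\cDes(T)}$, where for non-hook $\lambda$ the inner sum is well-defined by Theorem~\ref{conj1}. So the real content is to isolate the hook contributions and show
\[
\sum_{\substack{\text{hook}\\ \lambda \vdash n}} f^\lambda \sum_{T \in \SYT(\lambda)} \ttt^{\cDes(T)}
= \sum_{k=1}^{n-1} \binom{n-2}{k-1} \sum_{T \in \SYT((1^k)\oplus(n-k))} \ttt^{\cDes(T)}.
\]
Here the left side is a priori problematic, since hooks \emph{are} connected ribbons and Theorem~\ref{conj1} says no individual cyclic extension exists for them; so the first step is to argue that nevertheless the \emph{total} $\sum_{\text{hook }\lambda} f^\lambda \sum_{T} \ttt^{\cDes(T)}$, interpreted via Cellini on $\symm_n$ and pushed through RSK, \emph{is} well-defined, and to pin down its value.

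The key computational step is to use \eqref{cDes-fiber-sizes-as-inner-product} (Corollary~\ref{cor:fiber-sizes-as-inner-product}) to write each well-defined piece as an inner product with an affine ribbon Schur function $\cs_{\ccomp{n}{J}}$, and then compare coefficients of $\ttt^J$ for each $\varnothing \subsetneq J \subsetneq [n]$. On the $\symm_n$ side, the coefficient of $\ttt^J$ is $\#\{w : \cDes(w)=J\} = \langle \sum_\lambda f^\lambda s_\lambda, \cs_{\ccomp{n}{J}}\rangle = \langle h_1^n, \cs_{\ccomp{n}{J}}\rangle$, since $\sum_{\lambda \vdash n} f^\lambda s_\lambda = h_1^n$. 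On the right side, the non-hook contribution is $\sum_{\text{non-hook }\lambda} f^\lambda \langle s_\lambda, \cs_{\ccomp{n}{J}}\rangle$, while the disconnected shape $(1^k)\oplus(n-k)$ contributes $\langle s_{(1^k)\oplus(n-k)}, \cs_{\ccomp{n}{J}}\rangle = \langle s_{(1^k)} s_{(n-k)}, \cs_{\ccomp{n}{J}}\rangle$. So the identity to verify becomes the purely symmetric-function statement
\[
h_1^n = \sum_{\text{non-hook }\lambda \vdash n} f^\lambda\, s_\lambda \;+\; \sum_{k=1}^{n-1}\binom{n-2}{k-1}\, s_{(1^k)} s_{(n-k)},
\]
after pairing both sides against every $\cs_{\ccomp{n}{J}}$ — and in fact it suffices to prove this Schur-expansion identity outright, since $h_1^n = \sum_{\lambda \vdash n} f^\lambda s_\lambda$. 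Thus the claim reduces to showing that $\sum_{\text{hook }\lambda} f^\lambda s_\lambda = \sum_{k=1}^{n-1}\binom{n-2}{k-1} s_{(1^k)} s_{(n-k)}$ at the level of symmetric functions.

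To finish, I would expand the right-hand side via the Pieri rule: $s_{(1^k)} s_{(n-k)} = \sum s_\nu$ over $\nu$ obtained by adding a horizontal strip of size $n-k$ to the column $(1^k)$, i.e. $\nu$ ranges over the hooks and the ``fat hooks'' $(a+1,2^b,1^c)$-type shapes — so I must check that all non-hook terms telescope away under the alternating-looking binomial weights $\binom{n-2}{k-1}$, leaving exactly $f^{(n-j,1^j)} = \binom{n-1}{j}$ as the coefficient of each hook $s_{(n-j,1^j)}$. This is a finite, self-contained binomial bookkeeping: each near-hook shape $\mu$ with two columns of length $>1$ appears from exactly two consecutive values of $k$, and one shows the coefficients $\binom{n-2}{k-1}$ cancel in pairs (using $\binom{n-2}{k-1} = \binom{n-2}{k-1}$ shifted appropriately), while the genuine hooks survive with total weight $\sum \binom{n-2}{k-1} = \binom{n-1}{j}$ matching $f^{(n-j,1^j)}$. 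The main obstacle is the bookkeeping in this last step — correctly indexing which $\nu$ arise from which $k$ and verifying the cancellation — together with the conceptual point at the start that the hook contribution is well-defined \emph{only} in aggregate; once those are handled, everything else is a direct translation through \eqref{Gessel-ribbon-descent-fact}, \eqref{cDes-fiber-sizes-as-inner-product}, and RSK.
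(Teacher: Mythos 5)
Your overall strategy---compare coefficients of $\ttt^J$ by pairing against $\cs_{\ccomp{n}{J}}$, and reduce the theorem to the symmetric-function identity $\sum_{\text{hook }\lambda\vdash n} f^\lambda s_\lambda = \sum_{k=1}^{n-1}\binom{n-2}{k-1}\, s_{(1^k)}s_{(n-k)}$---is essentially the paper's: the paper proves the more general Theorem~\ref{prop25} (for any composition $\alpha$ with at least two parts; Theorem~\ref{conj2} is the case $\alpha=(1^n)$), with the hook rearrangement supplied by Lemma~\ref{lemma_prel2}, equation \eqref{e.sum_of_hooks} and Corollary~\ref{lemma_prel4}. However, your treatment of the left-hand side has a genuine gap. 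You propose to obtain $\#\{w:\cDes(w)=J\}=\langle h_1^n,\cs_{\ccomp{n}{J}}\rangle$ from RSK ``together with the $\cDes$-compatibility we aim to establish''; no such compatibility can be established, since hooks are connected ribbons and by Theorem~\ref{conj1} their SYT carry no cyclic extension at all---RSK transports only $\Des$, and Cellini's $\cDes(w)$ cannot be read off the recording tableau shape by shape. The fix (and what the paper does, via Subsection~\ref{known-examples-subsection} and Theorem~\ref{prop25}) is to bypass RSK entirely: identify $\symm_n$ with $\SYT((1)^{\oplus n})$, a disconnected strip, under which Cellini's $(\cDes,p)$ becomes the strip cyclic extension; since $(1)^{\oplus n}$ is not a connected ribbon, Corollary~\ref{cor:fiber-sizes-as-inner-product} (equivalently, the uniqueness of fiber sizes in Lemma~\ref{t.cyclic_extension}(iii)) gives $\#\cDes^{-1}(J)=\langle s_{(1)^{\oplus n}},\cs_{\ccomp{n}{J}}\rangle=\langle h_1^n,\cs_{\ccomp{n}{J}}\rangle$. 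Your remark that the hook contribution is well-defined ``only in aggregate'' gestures at this, but as written the step is unjustified.

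Your final bookkeeping is also misdescribed, though in a way that makes the task look harder than it is. By the Pieri rule, $s_{(1^k)}s_{(n-k)}$ expands into exactly two hooks, $s_{(n-k,1^k)}+s_{(n-k+1,1^{k-1})}$ (this is \eqref{e.sum_of_hooks}); no ``fat hook'' shapes of type $(a+1,2^b,1^c)$ occur, because two added boxes in the same column would violate the horizontal-strip condition. Hence there are no non-hook terms to cancel and no signs anywhere: the coefficient of $s_{(n-j,1^j)}$ on the right is $\binom{n-2}{j-1}+\binom{n-2}{j}=\binom{n-1}{j}=f^{(n-j,1^j)}$ by Pascal's rule, which is exactly what is needed. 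With the left-hand-side gap repaired as above and this (easy) verification stated correctly, your argument goes through and coincides with the paper's proof specialized to $\alpha=(1^n)$.
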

\noindent
The {\em direct sum} operation $\lambda \oplus \mu$ in the last summation
denotes a skew shape having the diagram of $\lambda$ strictly southwest
of the diagram for $\mu$, with no rows or columns in common.  For example, when
$(n,k)=(7,2)$, 
\[
(1^k)\oplus (n-k)
\quad = \quad (1^2) \oplus (5)
\quad = \quad
\ytableausetup{boxsize=\mysizesmall}
\ydiagram{1,1}
\,\,\, \oplus \,\,\,
\ydiagram{5}
\quad = \quad
\ydiagram{1+5,1,1}
\]





The rest of the paper is structured as follows.
In Section~\ref{sec:definitions} we define and study the abstract notion of cyclic extension of a descent map.
Section~\ref{sec:Postnikov} introduces affine ribbon Schur functions, and compares them to Postnikov's cylindric Schur functions and toric Schur polynomials.
Sections~\ref{sec:conj1-proof} and~\ref{sec:conj2-proof} contain proofs of Theorems \ref{conj1} and \ref{conj2}, respectively.
The multivariate distribution of $\cDes$ on $\symm_n$, as in Theorem~\ref{conj2}, is studied further in Section~\ref{sec:symmetric-group}.
Finally, Section \ref{sec:remarks-and-questions} 
contains various remarks and open questions.

\section*{Acknowledgments}
The authors thank Sergi Elizalde,  Ira Gessel,  Alex Postnikov and Richard Stanley for stimulating discussions, and Connor Ahlbach, Anders Bj\"orner, Alessandro Iraci,  Oliver Pechenik, Kyle Petersen, Jim Propp, Brendon Rhoades, Josh Swanson and Michelle Wachs 
for helpful comments.

\section{Cyclic descents:  definition, examples, and basic properties}
\label{sec:definitions}

\subsection{Definition}

Let us begin by formalizing the concept of a cyclic extension.
Recall the bijection $p: 2^{[n]} \longrightarrow 2^{[n]}$ induced by the cyclic shift $i \mapsto i + 1 \pmod n$, for all $i \in [n]$.

\begin{definition}
\label{def:cDes}
Let $\TTT$ be a finite set. A {\em descent map} is any map
$\Des: \TTT \longrightarrow 2^{[n-1]}$. 
A {\em cyclic extension} of $\Des$ is
a pair $(\cDes,p)$, where 
$\cDes: \TTT \longrightarrow 2^{[n]}$ is a map 
and $p: \TTT \longrightarrow \TTT$ is a bijection,
satisfying the following axioms:  for all $T$ in  $\TTT$,
\[
\begin{array}{rl}
\text{(extension)}   & \cDes(T) \cap [n-1] = \Des(T),\\
\text{(equivariance)}& \cDes(p(T))  = p(\cDes(T)),\\
\text{(non-Escher)}  & \varnothing \subsetneq \cDes(T) \subsetneq [n].\\
\end{array}
\]
\end{definition}

The non-Escher axiom will be important for the uniqueness of the cyclic extension; see Subsection~\ref{subsec:Escher} for a discussion of the consequences of omitting this assumption.

\subsection{Known examples}
\label{known-examples-subsection}

Cyclic extensions of descent maps have been given previously in several cases:

\begin{enumerate}
\item[$\bullet$]
For $\TTT = \symm_n$, the descent set $\Des(\pi)$ of a permutation $\pi$ was described in the Introduction, as was Cellini's original cyclic extension $(\cDes,p)$.
Note that $n \ge 2$ is required for the non-Escher property.

\item[$\bullet$]
More generally, given any (strict) {\em composition} $\alpha$ of $n$,
that is, an ordered sequence of positive integers $\alpha=(\alpha_1,\ldots,\alpha_t)$
with $\sum_i \alpha_i=n$, define the associated {\em horizontal strip} skew shape
\[
\alpha^\oplus :=(\alpha_1) \oplus (\alpha_2) \oplus \cdots \oplus (\alpha_t)
\]
whose rows, from southwest to northeast, have sizes $\alpha_1,\ldots,\alpha_t$. 
For $T$ in $\TTT=\SYT(\alpha^\oplus)$, we define
\[
\cDes(T):=\{1\le i\le n \,:\, i+1 \text{ is in a lower row than } i\},
\]
where $n+1$ is interpreted as $1$,
as well as a bijection $p: \SYT(\alpha^\oplus) \rightarrow \SYT(\alpha^\oplus)$
which first replaces each entry $j$ of $T$ by $j+1 \pmod n$ and then
re-orders each row to make it left-to-right increasing.
One can check that this $(\cDes,p)$ is a cyclic extension of $\Des$,
with $t \ge 2$ required for the non-Escher property.
For example, when $\alpha=(3,4,2)$
(and $n = 9$), one has the following standard tableaux $T$
of shape $\alpha^\oplus$:
\[
\begin{array}{rcl}
\ytableausetup{boxsize=\mysize}
T = \, 
\begin{ytableau}
 \none & \none & \none & \none & \none & 
 \none & \none & 3     & 9 \\
 \none & \none & \none & 1     & 5     & 
 7     & 8 \\
 2     & 4     & 6 \\
\end{ytableau}
& \overset{p}{\longmapsto} &
p(T) = \,
\begin{ytableau}
 \none & \none & \none & \none & \none & 
 \none & \none & 1     & 4 \\
 \none & \none & \none & 2     & 6     & 
 8     & 9 \\
 3     & 5     & 7 \\
\end{ytableau} 
\\
& & 
\\
\cDes(T) = \{1,3,5,9\}
& \overset{p}{\longmapsto} &
\cDes(p(T)) = \{1,2,4,6\}
\end{array}
\]
This generalizes the case of $(\cDes,p)$ on  $\symm_n$, since for $\alpha=(1^n)=(1,1,\ldots,1)$ one has a bijection $\symm_n \rightarrow \SYT(\alpha^\oplus)$ which sends a permutation $w$ to the tableau whose entries are $w^{-1}(1),\ldots,w^{-1}(n)$ read from southwest to northeast; 
e.g., for $n=5$,
\[
\ytableausetup{boxsize=\mysize}
w=[5,3,1,4,2] \quad \longmapsto \quad
\begin{ytableau}
\none& \none&  \none& \none&1\\
\none& \none&  \none& 4\\
\none& \none&  2\\
\none& 5\\
  3 
\end{ytableau}
\]
This bijection maps Cellini's cyclic extension $(\cDes,p)$ on $\symm_n$ to
the one on $\SYT(\alpha^\oplus)$ for $\alpha=(1,1,\ldots,1)$ defined above\footnote{This cyclic descent map can further be 
generalized  to {\em strips}, which are the disconnected shapes each of whose connected components consisting of either one row or one column; see  \cite{AER}.}.

\item[$\bullet$]
Let $\TTT=\SYT(\lambda)$ with $\lambda=(a^b)$ of {\em rectangular} shape, e.g. 
\[
\lambda = (5^3) = \,
\ytableausetup{boxsize=\mysizesmall}
\ydiagram{5,5,5}
\]
Consider the usual notion of descent set $\Des(T)$ on standard tableaux, as in the Introduction.
As mentioned earlier, Rhoades \cite[Lemma 3.3]{Rhoades} showed 
that Sch\"utzenberger's jeu-de-taquin promotion operation $p$ provides
a cyclic extension $(\cDes,p)$.
Again, we require $a, b \ge 2$ for the non-Escher property.

\item[$\bullet$]
Let $\TTT=\SYT(\lambda)$ with $\lambda$ of {\em hook plus internal corner} shape, namely $\lambda = (n-2-k,2,1^k)$ with $0 \le k \le n-4$, e.g. 
\[
\lambda = (8,2,1,1,1) = \,
\ytableausetup{boxsize=\mysizesmall}
\ydiagram{8,2,1,1,1,}
\]
There is a unique
cyclic descent map $\cDes$, defined as follows:
by the extension property, it suffices to specify when $n \in \cDes(T)$,
and one decrees this to hold if and only if the entry $T_{2,2}-1$ 
lies strictly west of $T_{2,2}$ (namely, in the first column of $T$);
see \cite{AER} for more details. 
Note that for most shapes in this family there are several possible shifting bijections $p$, so that the cyclic extension $(\cDes,p)$ is not unique.

\item[$\bullet$]
Let $\TTT=\SYT(\lambda)$ with $\lambda$ of {\em two-row} shape,
namely $\lambda=(n-k,k)$ with $2 \le k\le n/2$, 
e.g. 
\[
\lambda = (8,3) = \,
\ytableausetup{boxsize=\mysizesmall}
\ydiagram{8,3}
\]
There exists a cyclic extension of $\Des$ defined as follows; see \cite{AER}.
Decree that $n\in\cDes(T)$ if and only if both
\begin{itemize}
\item[$-$] the last two entries in the second row of $T$ are consecutive, and 
\item[$-$] for every $1<i<k$,  $T_{2,i-1} > T_{1,i}$.
\end{itemize}

\item[$\bullet$]
For any nonempty partition $\lambda\vdash n-1$, the partition $\lambda \oplus (1)$, e.g.
\[
\lambda=(4,3,1) \oplus (1) = \,
\ytableausetup{boxsize=\mysizesmall}
\ydiagram{4+1,4,3,1}
\]
has an explicit cyclic extension of $\Des$ described in \cite{ER1}.  This
case was, in fact, our original motivation, and the question of existence of a cyclic extension of $\Des$ on $\SYT(\lambda/\mu)$ appears there as \cite[Problem 5.5]{ER1}.
\end{enumerate}

\subsection{Existence and uniqueness of cyclic extensions}

\begin{lemma}\label{t.cyclic_extension}
Fix a set $\TTT$ and a map $\Des: \TTT \to 2^{[n-1]}$.

\begin{enumerate}
\item[$(i)$] 
Any cyclic extension $(\cDes,p)$ of $\Des$ has fiber sizes 
$m(J) := \#\cDes^{-1}(J)$
(for $J \subseteq [n]$) satisfying
\begin{itemize}
\item[$(a)$]
$m(J) \ge 0$ for all $J$, with $m(\varnothing)=m([n])=0$;
\item[$(b)$]
$m(J)=m(p(J))$ for all $J$; and
\item[$(c)$]
$m(J) + m(J \sqcup \{n\}) = \Des^{-1}(J)$ for all $J \subseteq [n-1]$.
\end{itemize}

\item[$(ii)$]
Conversely, if 
$(m(J))_{J \subseteq [n]}$ are integers
satisfying conditions (a),(b),(c) above,
then there exists at least one cyclic extension $(\cDes,p)$ of $\Des$ satisfying $\#\cDes^{-1}(J) = m(J)$ for all $J \subseteq [n]$.

\item[$(iii)$] 
Under the same hypotheses as in (ii), 
the fiber size $m(J) = \#\cDes^{-1}(J)$ is uniquely determined, for every subset
$\varnothing \ne J = \{j_1 < \ldots < j_t\} \subseteq [n]$, by the formula
\begin{equation}
\label{cDes-fiber-sizes-formula}
m(J) = \#\cDes^{-1}(J)
= \sum_{i=1}^{t} (-1)^{i-1} \#\Des^{-1}(\{j_{i+1} - j_i, \ldots, j_t - j_i\}),
\end{equation}
interpreted for $t = 1$ as
\[
m(\{j_1\}) = \#\cDes^{-1}(\{j_1\})
= \#\Des^{-1}(\varnothing).
\]
Of course, $m(\varnothing) = \#\cDes^{-1}(\varnothing) =
0$.
\end{enumerate}

\end{lemma}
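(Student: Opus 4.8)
The plan is to treat the three parts in order, since (ii) and (iii) build on the constraints isolated in (i). For part (i), I would simply read off each condition from the three axioms in Definition~\ref{def:cDes}: condition (a) combines nonnegativity of cardinalities with the non-Escher axiom $\varnothing \subsetneq \cDes(T) \subsetneq [n]$, which forbids $\varnothing$ and $[n]$ from ever occurring as a fiber; condition (b) follows because $p$ is a bijection on $\TTT$ intertwining $\cDes$ with the cyclic shift $p$ on $2^{[n]}$ via the equivariance axiom, so $\cDes^{-1}(J)$ and $\cDes^{-1}(p(J)) = p(\cDes^{-1}(J))$ have equal size; and condition (c) follows from the extension axiom, which says that the fibers $\cDes^{-1}(J)$ and $\cDes^{-1}(J \sqcup \{n\})$ partition $\Des^{-1}(J)$ for each $J \subseteq [n-1]$ (every $T$ with $\Des(T) = J$ has $\cDes(T)$ equal to either $J$ or $J \sqcup \{n\}$).

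For part (ii), the task is to manufacture $(\cDes, p)$ from an abstract solution $(m(J))$ of (a),(b),(c). The natural approach: first choose, for each $J \subseteq [n-1]$, an arbitrary partition of the fiber $\Des^{-1}(J)$ into a block $A_J$ of size $m(J)$ and a block $B_J$ of size $m(J \sqcup \{n\})$ — this is possible by (c) together with $m(J), m(J\sqcup\{n\}) \ge 0$ — and provisionally set $\cDes(T) := J$ for $T \in A_J$ and $\cDes(T) := J \sqcup \{n\}$ for $T \in B_J$. This already satisfies extension and non-Escher (by (a), the empty set and $[n]$ get no preimages). The remaining issue is to define $p$ so that equivariance holds. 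Here I would use condition (b): partition the power set $2^{[n]}$ (minus $\varnothing, [n]$) into orbits of the cyclic shift; within each orbit, condition (b) guarantees all the sets have fibers of the same cardinality, so one can choose bijections between consecutive fibers around the orbit whose composition around the full orbit is the identity (choosing $|O|-1$ of the bijections freely and forcing the last), and declare $p$ on $\TTT$ to act by these bijections. By construction $p$ is a bijection of $\TTT$ and $\cDes \circ p = p \circ \cDes$. I expect the only mild subtlety here to be the bookkeeping that the composition of the chosen fiber bijections around each cyclic orbit can indeed be arranged to be the identity — but since all fibers in an orbit have the same finite size, this is immediate.

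For part (iii), the goal is the closed formula \eqref{cDes-fiber-sizes-formula}, expressing $m(J)$ solely in terms of $\Des$-fiber sizes. The approach is pure inclusion–exclusion driven by (b) and (c). I would argue by induction on $t = \#J$ (or equivalently on how many elements of $J$ are "large"). Write $J = \{j_1 < \dots < j_t\}$. If $n \in J$, apply the cyclic shift $p$ repeatedly to reduce to a $J'$ with $n \notin J'$ having the same $m$-value by (b); so assume $n \notin J$, i.e. $j_t \le n-1$. Then by (c), $m(J) = \#\Des^{-1}(J) - m(J \sqcup \{n\})$. Now $J \sqcup \{n\}$ is a set with $n$ as its maximal element, and applying the cyclic shift $p$ carries it to a set of the same $m$-value but with strictly fewer "large" elements, reducing to a smaller case; unwinding this recursion produces the alternating sum over $i$, where the $i$-th term corresponds to shifting so that $j_i$ becomes the new "$n$" and the surviving elements are $j_{i+1} - j_i, \dots, j_t - j_i$ (which all lie in $[n-1]$), and the sign $(-1)^{i-1}$ records how many times (c) was invoked. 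The base case $t = 1$ gives $m(\{j_1\}) = \#\Des^{-1}(\varnothing)$ after shifting $\{j_1\}$ to $\{n\}$ and using (c) with $J = \varnothing$ together with $m(\varnothing) = 0$ from (a). The main obstacle in this part is getting the indices and the cyclic-shift reductions to line up exactly with the stated summand $\{j_{i+1} - j_i, \dots, j_t - j_i\}$; this is a careful but routine unwinding of the recursion $m(J) = \#\Des^{-1}(J \cap [n-1])\text{-type terms} - m(\text{shift})$. Finally $m(\varnothing) = 0$ is restated directly from (a).
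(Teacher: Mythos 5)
Your parts (i) and (ii) are correct and essentially identical to the paper's argument: read the three conditions off the three axioms, then build $\cDes$ by splitting each $\Des$-fiber $\Des^{-1}(J)$ into pieces of sizes $m(J)$ and $m(J\sqcup\{n\})$, and build $p$ orbit-by-orbit on $2^{[n]}$ using condition (b) (the requirement that the composition of the fiber bijections around an orbit be the identity is harmless but not even needed for equivariance).

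Part (iii), however, has a genuine gap: the recursion you describe does not unwind to \eqref{cDes-fiber-sizes-formula} and in fact does not terminate. Your first step, $m(J)=\#\Des^{-1}(J)-m(J\sqcup\{n\})$, already introduces the term $\#\Des^{-1}(J)$, a fiber over a $t$-element set, whereas every term of \eqref{cDes-fiber-sizes-formula} is a fiber over a set of size $t-1,t-2,\ldots,0$. Worse, if you then continue as described --- rotate $J\sqcup\{n\}$ so that some $j_i$ becomes the new $n$ and strip it off via (c) --- the sets alternate between sizes $t$ and $t+1$ and, after running through the cyclic rotations of the associated cyclic composition, you return to $J$ itself; all the $\#\Des^{-1}$ terms cancel and the recursion closes into the tautology $m(J)=m(J)$. (For $n=5$, $J=\{1,3\}$: $\{1,3\}\to\{1,3,5\}\to\{2,4\}\to\{2,4,5\}\to\{2,3\}\to\{2,3,5\}\to\{1,3\}$.) There is no quantity such as a ``number of large elements'' that strictly decreases along this reduction. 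A further (smaller) issue: your preliminary ``WLOG $n\notin J$'' step via (b) tacitly assumes that the right-hand side of \eqref{cDes-fiber-sizes-formula} is invariant under cyclic rotation of $J$, which is not obvious in advance. The fix is to peel off the \emph{smallest} element rather than adjoin $n$: by (b), shift the suffix $\{j_i,\ldots,j_t\}$ by $-j_i$ so that $j_i$ becomes $n$, and then (c) gives
$\#\Des^{-1}(\{j_{i+1}-j_i,\ldots,j_t-j_i\})
= m(\{j_{i+1},\ldots,j_t\})+m(\{j_i,\ldots,j_t\})$;
taking the alternating sum over $i=1,\ldots,t$ telescopes to $m(J)+(-1)^{t-1}m(\varnothing)=m(J)$, which is exactly the paper's proof (your base case $t=1$ is the instance $i=t$ of this identity), and it handles $n\in J$ and $n\notin J$ uniformly with no rotation-invariance claim needed.
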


\noindent
Note that Lemma~\ref{t.cyclic_extension} does {\em not} assert uniqueness of the bijection
$p: \TTT \rightarrow \TTT$, which can fail,
as in the following example.

\begin{example}
\label{non-unique-orbits-example}
When $\lambda=(3,2,1)$, $\#\SYT(\lambda) = 16$.
Lemma~\ref{t.cyclic_extension}(iii)
determines the following $\cDes$ distribution: each of the sets 
\[
\{1,2,4\}, \{1,2,5\}, \{1,3,4\}, 
\{1,3,6\}, \{1,4,5\}, \{1,4,6\},
\]
\[ 
\{2,3,5\}, \{2,3,6\}, \{2,4,5\}, 
\{2,5,6\}, \{3,4,6\}, \{3,5,6\}
\]
appears once (as a value of $\cDes$), and each of the sets
\[
\{1,3,5\}, \{2,4,6\}
\]
appears twice.
Thus there are two $p$-orbits of size $6$, 
namely this one (with entries of $\cDes(T)$ in bold)
\[
\ytableausetup{boxsize=\mysize}
\cdots \overset{p}{\longmapsto}
\begin{ytableau}
1 & 2 & \mathbf{3} \\
4 & \mathbf{5} &\none\\
\mathbf{6} & \none&\none
\end{ytableau}
\overset{p}{\longmapsto}
\begin{ytableau}
\mathbf{1} & 3 & \mathbf{4} \\
2 & \mathbf{6} &\none\\
5 & \none&\none
\end{ytableau}
\overset{p}{\longmapsto}
\begin{ytableau}
\mathbf{1} & 4 & \mathbf{5} \\
\mathbf{2} & 6 &\none\\
3 & \none&\none
\end{ytableau}
\overset{p}{\longmapsto}
\begin{ytableau}
1 & \mathbf{2} & \mathbf{6} \\
\mathbf{3} & 5 &\none\\
4 & \none&\none
\end{ytableau}
\overset{p}{\longmapsto}
\begin{ytableau}
\mathbf{1} & \mathbf{3} & 6 \\
2 & \mathbf{4} &\none\\
5 & \none&\none
\end{ytableau}
\overset{p}{\longmapsto}
\begin{ytableau}
1 & \mathbf{2} & \mathbf{4} \\
3 & \mathbf{5} &\none\\
6 & \none&\none
\end{ytableau}
\overset{p}{\longmapsto} \cdots
\]
and this one
\[
\cdots \overset{p}{\longmapsto}
\begin{ytableau}
1 & 2 & \mathbf{3} \\
\mathbf{4} & \mathbf{6} &\none\\
5 & \none&\none
\end{ytableau}
\overset{p}{\longmapsto}
\begin{ytableau}
\mathbf{1} & 3 & \mathbf{4} \\
2 & \mathbf{5} &\none\\
6 & \none&\none
\end{ytableau}
\overset{p}{\longmapsto}
\begin{ytableau}
1 & \mathbf{2} & \mathbf{5} \\
3 & 4 &\none\\
\mathbf{6} & \none&\none
\end{ytableau}
\overset{p}{\longmapsto}
\begin{ytableau}
\mathbf{1} & \mathbf{3} & \mathbf{6} \\
2 & 5 &\none\\
4 & \none&\none
\end{ytableau}
\overset{p}{\longmapsto}
\begin{ytableau}
\mathbf{1} & \mathbf{4} & 6 \\
\mathbf{2} & 5 &\none\\
3 & \none&\none
\end{ytableau}
\overset{p}{\longmapsto}
\begin{ytableau}
1 & \mathbf{2} & \mathbf{5} \\
\mathbf{3} & 6 &\none\\
4 & \none&\none
\end{ytableau}
\overset{p}{\longmapsto} \cdots
\]
The remaining four tableaux can either 
form one $p$-orbit of size four, or two $p$-orbits of size two (in two distinct orders each):
\[
\left\{
\begin{ytableau}
\mathbf{1} & \mathbf{3} & \mathbf{5} \\
2 & 4 &\none\\
6 & \none&\none
\end{ytableau}
\quad , \quad
\begin{ytableau}
1 & \mathbf{2} & \mathbf{4} \\
3 & \mathbf{6} &\none\\
5 & \none&\none
\end{ytableau}
\quad , \quad
\begin{ytableau}
\mathbf{1} & \mathbf{3} & \mathbf{5} \\
2 & 6 &\none\\
4 & \none&\none
\end{ytableau}
\quad , \quad
\begin{ytableau}
1 & \mathbf{2} & \mathbf{6} \\
3 & \mathbf{4} &\none\\
5 & \none&\none
\end{ytableau}
\right\}
\]
\end{example}

\begin{proof}[Proof of Lemma~\ref{t.cyclic_extension}(i).]
The nonnegativity property (a) is obvious,
with $m(\varnothing) = m([n]) = 0$ following from the non-Escher axiom,
while properties (b) and (c) are consequences of the equivariance and extension axioms in Definition~\ref{def:cDes}.
\end{proof}

\begin{proof}[Proof of Lemma~\ref{t.cyclic_extension}(ii).]

Let $(m(J))_{J \subseteq [n]}$ be integers satisfying conditions (a),(b),(c) above.
For each $J \subseteq [n-1]$ choose, arbitrarily, a subset $\TTT_J^0$ of $\TTT_J := \Des^{-1}(J)$ of size $\#\TTT_J^0 = m(J)$, and denote 
$\TTT_J^1 := \TTT_J \setminus \TTT_J^0$; by condition (c), $\#\TTT_J^1 = m(J \sqcup \{n\})$.
By construction,
\[
\TTT = \bigsqcup_{J \subseteq [n-1]} (\TTT_J^0 \sqcup \TTT_J^1).
\]
Define $\cDes: \TTT \longrightarrow 2^{[n]}$ by
\[
\cDes(T) := 
\begin{cases}
J, & \text{if } T \in \TTT_J^0; \\
J \sqcup \{n\}, & \text{if } T \in \TTT_J^1.
\end{cases}
\]
Note that this map satisfies $\#\cDes^{-1}(J) = m(J)$ for all $J \subseteq [n]$,
as well as the extension and non-Escher axioms of Definition~\ref{def:cDes}.
It remains to define a bijection $p: \TTT \longrightarrow \TTT$ so that the equivariance axiom is satisfied as well.

To this end, consider the natural bijection $p_n: 2^{[n]} \longrightarrow 2^{[n]}$ defined by 
\[
p_n(J) := \{ {j+1 \pmod n} \,:\, j \in J\} \qquad (\forall J \subseteq [n]);
\]
the notation $p_n$ being used here for clarity, distinguishing this standard map from our hypothetical $p: \TTT \longrightarrow \TTT$.
The set $2^{[n]}$ is a disjoint union of its $p_n$-orbits, and it suffices to define $p$ as a bijection on $\cDes^{-1}(\OOO)$, for each orbit $\OOO$ separately, so that $\cDes$ becomes equivariant.

Let $\OOO$ be a $p_n$-orbit.
The assumption $m(\varnothing) = m([n]) = 0$ implies that we don't need to consider the (singleton) orbits containing $\varnothing$ and $[n]$.
Since $p_n^n$ is the identity map on $2^{[n]}$, the orbit size $d := \#\OOO$ must be a divisor of $n$; and $d > 1$ since $\varnothing$ and $[n]$ are the only sets fixed by $p_n$. 
Choosing an arbitrary $J \in \OOO$, it follows that $\OOO = \{J, p_n(J), \ldots, p_n^{d-1}(J)\}$. The sizes of the sets $$
\cDes^{-1}(J), \,\, \cDes^{-1}(p_n(J)), \,\,  \cDes^{-1}(p^2_n(J)),\,\,  \ldots, \,\, \cDes^{-1}(p_n^{d-1}(J))
$$ 
are 
$m(J), m(p_n(J)), \ldots, m(p_n^{d-1}(J))$, respectively, and all these numbers are equal due to condition (b).
Define now the bijection 
$$
p: \cDes^{-1}(p_n^i(J)) \longrightarrow \cDes^{-1}(p_n^{i+1}(J))
$$ 
completely arbitrarily, for each $0 \le i \le d-2$. One then sees that this leads to a unique definition of $p: \cDes^{-1}(p_n^{d-1}(J)) \longrightarrow \cDes^{-1}(J)$ which makes $p^d$ the identity map on $\cDes^{-1}(\OOO)$.
The equivariance of $\cDes$ should now be clear.
\end{proof}

\begin{proof}[Proof of Lemma~\ref{t.cyclic_extension}(iii).]
Of course, $m(\varnothing) = \#\cDes^{-1}(\varnothing) =
0$ by property (a).
For each $\varnothing \ne J = \{j_1 < \ldots < j_t\} \subseteq [n]$ and $1 \le i \le t$, one has
$\{j_{i+1} - j_i, j_{i+2}-j_i, \ldots, j_t - j_i\} \subseteq [n-1]$ so that
\[
\begin{aligned}
&\#\Des^{-1}(\{j_{i+1} - j_i, j_{i+2}-j_i, \ldots, j_t - j_i\}) \\
&\quad \overset{(c)}{=} 
   m(\{j_{i+1} - j_i, j_{i+2} - j_i, \ldots, j_t - j_i\}) +
   m(\{j_{i+1} - j_i, j_{i+2}-j_i, \ldots, j_t - j_i, n\}) \\
&\quad \overset{(b)}{=}
   m(\{j_{i+1}, j_{i+2}, \ldots, j_t\}) + 
   m(\{j_i, j_{i+1}, j_{i+2}, \ldots, j_t\}),
\end{aligned}
\]
yielding
\[
\begin{aligned}
&\sum_{i=1}^t (-1)^{i-1} 
  \#\Des^{-1}(\{j_{i+1} - j_i, j_{i+2} - j_i, \ldots, j_t - j_i\}) \\
&= \sum_{i=1}^t (-1)^{i-1} 
  \left( \,\,  m(\{j_{i+1}, j_{i+2}, \ldots, j_t\}) + 
          m(\{j_i, j_{i+1}, j_{i+2}, \ldots, j_t\}) \,\, \right)\\
&= m(J) + (-1)^{t-1} m(\varnothing)
\overset{(a)}{=} m(J). 
\end{aligned}
\]
\end{proof}

\subsection{Univariate generating functions}

The definition of a cyclic extension $(\cDes,p)$ for $\Des$ on $\TTT$ leads immediately to a relation between the ordinary generating functions
\[
\TTT^\cdes(t):=\sum_{T \in \TTT} t^{\cdes(T)}
\]
and
\[
\TTT^\des(t):=\sum_{T \in \TTT} t^{\des(T)}.
\]

\begin{lemma}
\label{lem:cdes-des-distribution-relation}
For any cyclic extension $(\cDes,p)$ of $\Des$ on $\TTT$ 
one has
\[
\frac{n \TTT^{\des}(t)}{(1-t)^{n+1}}
= \frac{d}{dt} \left[ 
\frac{\TTT^{\cdes}(t)}{(1-t)^{n}}\right].
\]
\end{lemma}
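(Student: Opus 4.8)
The plan is to express both sides of the claimed identity in terms of the fiber sizes $\#\Des^{-1}(J)$ and $\#\cDes^{-1}(J)$, and then reduce the identity to the defining axioms of a cyclic extension (extension, equivariance, non-Escher), which have already been packaged as conditions (a), (b), (c) in Lemma~\ref{t.cyclic_extension}(i). First I would set $d_j := \#\{T \in \TTT : \des(T) = j\}$ and $c_j := \#\{T \in \TTT : \cdes(T) = j\}$, so that $\TTT^\des(t) = \sum_j d_j t^j$ and $\TTT^\cdes(t) = \sum_j c_j t^j$. The standard device is to use the negative-binomial expansion $\frac{1}{(1-t)^{m}} = \sum_{k \ge 0} \binom{k+m-1}{m-1} t^k$; multiplying the generating functions by $(1-t)^{-n}$ or $(1-t)^{-(n+1)}$ and extracting the coefficient of $t^N$ turns each side of the asserted identity into a polynomial identity in $N$, and it suffices to check that identity for all nonnegative integers $N$.

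The combinatorial heart is the relation between the $\des$- and $\cdes$-distributions. From condition (c) of Lemma~\ref{t.cyclic_extension}(i), for each $J \subseteq [n-1]$ one has $\#\Des^{-1}(J) = \#\cDes^{-1}(J) + \#\cDes^{-1}(J \sqcup \{n\})$; summing over all $J$ of a fixed size $j$ gives $d_j = (\text{number of }T\text{ with }\cdes(T)=j\text{ and }n \notin \cDes(T)) + (\text{number with }\cdes = j \text{ and }n \in \cDes(T))$. Writing $c_j = a_j + b_j$ with $a_j$ counting the tableaux with $n \notin \cDes(T)$ and $b_j$ those with $n \in \cDes(T)$, condition (c) says $d_j = a_j + b_{j+1}$, because deleting $n$ from a cyclic descent set containing it drops the cardinality by one. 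Thus $\TTT^\des(t) = A(t) + t^{-1}(B(t) - b_0)$ where $A(t) = \sum a_j t^j$, $B(t) = \sum b_j t^j$, and $b_0 = 0$ by the non-Escher axiom (a set containing $n$ is never empty); hence $\TTT^\des(t) = A(t) + t^{-1}B(t)$ and $\TTT^\cdes(t) = A(t) + B(t)$. Meanwhile equivariance (condition (b)) forces each $p$-orbit of a set $J$ with $n \in J$ to biject with the orbit of $J \setminus \{n\} \cup \{\text{its }p\text{-translate}\}$, which — after tracking how the statistic $\cdes$ varies along an orbit — yields the additional relation $\sum_j j\, a_j = \sum_j j\, b_j$ in a weighted form; more precisely one gets a clean relation of the shape $n\,B(t) = t\,\frac{d}{dt}[\,\TTT^\cdes(t)\,] - \text{(boundary terms)}$ when the bookkeeping over orbits is done carefully.

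With these relations in hand, I would substitute $\TTT^\des(t) = A(t) + t^{-1}B(t)$ into the left side and $\TTT^\cdes(t) = A(t)+B(t)$ into the right side, carry out the differentiation $\frac{d}{dt}[(A+B)(1-t)^{-n}] = (A'+B')(1-t)^{-n} + n(A+B)(1-t)^{-(n+1)}$, and verify term-by-term that the two sides agree using the orbit relation linking $B$ to $\frac{d}{dt}\TTT^\cdes$. Alternatively, and perhaps more cleanly, I would prove it directly at the level of each $p$-orbit: on a single orbit $\OOO = \{T_0, \dots, T_{d-1}\}$ of size $d \mid n$ with $\cdes$-values forming a multiset that "wraps around" consistently, both sides of the identity can be checked to be orbit-wise equal, and then one sums over orbits. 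The main obstacle I anticipate is precisely the orbit bookkeeping: making rigorous the claim that along a $p$-orbit the quantity $\sum_{T \in \OOO} \cdes(T)$ is exactly $\frac{d}{n}\binom{n}{1}\cdot(\text{something})$ — i.e., controlling how $\cdes$ changes as one applies $p$ and shifts a subset of $[n]$ cyclically. This is the one place where equivariance does real work beyond a formal coefficient comparison, and where an off-by-one in the treatment of the element $n$ (or of $\varnothing$ and $[n]$, excluded by non-Escher) could derail the computation; everything else is the routine manipulation of $\frac{1}{(1-t)^m} = \sum_k \binom{k+m-1}{m-1}t^k$ and the product rule.
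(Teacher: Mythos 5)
Your reduction is sound as far as it goes: with $a_j,b_j$ counting tableaux with $\cdes=j$ according to whether $n\notin\cDes(T)$ or $n\in\cDes(T)$, condition (c) does give $\TTT^\des(t)=A(t)+t^{-1}B(t)$, and after clearing denominators the lemma is equivalent to $n\,\TTT^\des(t)=n\,\TTT^\cdes(t)+(1-t)\tfrac{d}{dt}\TTT^\cdes(t)$, which would follow formally from the single extra identity $n\,B(t)=t\,\tfrac{d}{dt}\TTT^\cdes(t)$, i.e.\ $n\,b_c=c\,c_c$ for all $c$ (indeed $n\TTT^\des=n(A+B)+n\tfrac{1-t}{t}B=n\TTT^\cdes+(1-t)\tfrac{d}{dt}\TTT^\cdes$). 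But that identity is exactly where the content of the lemma lies, and you never establish it: you only assert a relation ``of the shape $nB(t)=t\tfrac{d}{dt}\TTT^\cdes(t)-(\text{boundary terms})$'' pending ``careful bookkeeping,'' and your intermediate parenthetical $\sum_j j\,a_j=\sum_j j\,b_j$ is false in general (e.g.\ for $\TTT=\symm_3$ with Cellini's extension one gets $4\neq 5$; the correct relation $b_c=\tfrac{c}{n}c_c$ does not imply it). So the proposal, as written, stops exactly short of the one step that requires equivariance; everything else is routine algebra.

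The missing step is a short double count with \emph{no} boundary terms, and it is precisely the paper's argument. For each fixed $k$, $p^k$ is a bijection of $\TTT$, and equivariance gives $\cDes(p^kT)=p^k(\cDes(T))$; in particular $\cdes(p^kT)=\cdes(T)$, and for a fixed $T$ with $\cdes(T)=c$ the number of $k\in\{0,1,\dots,n-1\}$ with $n\in\cDes(p^kT)$ is exactly $c$, since each element of $\cDes(T)$ is shifted onto $n$ for exactly one value of $k$. Hence $n\,b_c=\sum_{k=0}^{n-1}\#\{T:\cdes(T)=c,\ n\in\cDes(p^kT)\}=\sum_{T:\,\cdes(T)=c}c=c\,c_c$, which is your needed relation on the nose. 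The paper packages the same count slightly differently: it writes $n\,\TTT^\des(t)=\sum_{T}\sum_{k=0}^{n-1}t^{\des(p^kT)}$ and observes that in the list $T,pT,\dots,p^{n-1}T$ exactly $\cdes(T)$ terms have $n$ as a cyclic descent (contributing $t^{\cdes(T)-1}$, by the extension axiom) and the remaining $n-\cdes(T)$ contribute $t^{\cdes(T)}$, yielding $n\,\TTT^\des=n\,\TTT^\cdes+(1-t)\tfrac{d}{dt}\TTT^\cdes$ in one stroke. If you insert the double-count above (or the orbit-averaging version) in place of your ``bookkeeping'' placeholder, your proof closes and is essentially the paper's; also note that the negative-binomial coefficient extraction in your first paragraph is unnecessary, since clearing $(1-t)^{n+1}$ already reduces the lemma to a polynomial identity.
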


\begin{proof}
Since $p$ and each of its powers $p^k$ are bijective, one has 
$$
n \TTT^{\des}(t) = \sum_{T \in \TTT} \sum_{k=0}^{n-1} t^{\des(p^k T)}.
$$
However, equivariance implies that if $\cdes(T)=c$
then the ordered list $(T,pT,p^2T,\ldots,p^{n-1}T)$ contains
\begin{itemize}
\item[$\bullet$]
exactly $c$ entries which have $n$ in their cyclic descent set,
and hence have $\cdes(T)-1$ descents, and
\item[$\bullet$]
the remaining $n-c$ elements have $\cdes(T)$ descents.
\end{itemize}
Therefore the right side above can be rewritten
$$
\sum_{T \in \TTT} \left( \cdes(T) t^{\cdes(T)-1} + (n-\cdes(T)) t^{\cdes(T)} \right)
= n \TTT^\cdes(t) + (1-t) \frac{d}{dt} \TTT^\cdes(t).
$$
It is not hard to check that this is equivalent to the assertion of the lemma.
\end{proof}

\begin{remark}
\label{non-Escher-implies-no-constant-term}
This lemma completely determines $\TTT^{\cdes}(q)$ in terms of $\TTT^{\des}(q)$,
since the non-Escher condition implies that the polynomial $\TTT^{\cdes}(t)$ has no constant term in $t$, and similarly for the formal power series $\TTT^{\cdes}(t)/(1-t)^n$.
\end{remark}

\section{Ribbon Schur functions: affine, cylindric and toric}
\label{sec:Postnikov}

Recall from the introduction that our proof strategy for Theorem~\ref{conj1} involves the introduction of a family of new symmetric functions, which we call {\em affine} (or {\em cyclic}) {\em ribbon Schur functions}.  
We recall here their definition and develop some of their properties, using standard terminology and properties of symmetric functions, as in Macdonald \cite{Macdonald}, Sagan \cite{Sagan}, or Stanley \cite{EC2}.

\subsection{Ribbon Schur functions}
\label{sec:ribbon-schurs}

We start by recalling the classical {\em ribbon Schur functions}.

A {\em (strict) composition}  of $n$ is a sequence $\alpha=(\alpha_1,\ldots,\alpha_t)$ of positive integers satisfying $\alpha_1 + \ldots + \alpha_t = n$.  
Recall from \eqref{composition-of-set} in the Introduction that 
to each subset $J = \{ j_1 < j_2 < \ldots < j_t \} \subseteq [n-1]$ one associates a {\em composition} of $n$, 
\[
\comp{n}{J}:= (j_1,j_2-j_1,j_3-j_2,\ldots,j_t-j_{t-1},n-j_t).
\]

It is customary to associate several symmetric
functions to these objects.  For each composition $\alpha=(\alpha_1,\ldots,\alpha_t)$ 
of $n$, the corresponding {\em homogeneous symmetric function} is
\[
h_{\alpha} := h_{\alpha_1}  h_{\alpha_2} \cdots h_{\alpha_t}
\qquad \text{ where } \qquad
h_k = \sum_{i_1 \le \ldots \le i_k} x_{i_1} \cdots x_{i_k}.
\]
In fact, $h_\alpha$ is a special case of the {\em skew Schur function} $s_{\lambda/\mu}$,
defined generally via $s_{\lambda/\mu} = \sum_{T} \xx^T$,
where $\xx^T:=\prod_{i \in T} x_i$ and $T$ runs through all {\em semistandard (column strict)} tableaux of shape $\lambda/\mu$:
\begin{equation}
\label{every-h-is-a-skew-schur}
h_\alpha = s_{\alpha^\oplus}=s_{(\alpha_1)\oplus \cdots \oplus(\alpha_t)};
\quad \text{e.g.,}\quad
h_{(2,1,3)} = s_{\ytableausetup{boxsize=\mysizetiny}
\begin{ytableau}
\none&\none&\none&   &  & \\
\none&\none&  \\
     &     \\
\end{ytableau}}\,.
\end{equation}
On the other hand, the {\em Jacobi-Trudi formula} expresses any skew Schur
function $s_{\lambda/\mu}$ as a polynomial
in the $h_k$, or linear combination of the $h_\alpha$, as follows:
\[
s_{\lambda/\mu} = \det( h_{\lambda_i-\mu_j+j-i} )_{i,j=1,2,\ldots,\ell(\lambda)},
\]
with the convention that $h_0=1$ while $h_k=0$ for $k < 0$.
In the special case where $\lambda/\mu$ is the ribbon shape having row sizes (from bottom row to to top row) given by the composition $\alpha$ (call this skew shape $\alpha$, by an abuse of notation),
this determinant leads to the formula for $s_\alpha$ given
as \eqref{ribbon-skew-as-alternating-sum} in the Introduction:
if $\alpha=\comp{n}{J}$, then 
\[
s_\alpha = s_{\comp{n}{J}}
= \sum_{\varnothing \subseteq I \subseteq J}
(-1)^{\#(J \setminus I)} h_{\comp{n}{I}}.
\]
For example, if $n=9$ and $J=\{2,6\}$, so that $\alpha=\comp{9}{J}=(2,4,3)$, then $\lambda/\mu = (7,5,2)/(4,1)$ and
\[
\begin{aligned}
s_\alpha = s_{\ytableausetup{boxsize=\mysizetiny}
\begin{ytableau}
\none&\none&\none&\none&     &    &    \\
\none&     &     &     &     \\
     &     \\
\end{ytableau}}
=\det\left[ 
\begin{matrix} 
h_3 & h_{4+3} & h_{2+4+3}\\
1   & h_4 & h_{2+4}\\
0   & 1  & h_2
\end{matrix}
\right]
&= h_{(2,4,3)} - h_{(6,3)}-h_{(2,7)} + h_{(9)} \\
&= h_{\comp{9}{\{2,6\}}} -h_{\comp{9}{\{6\}}} -h_{\comp{9}{\{2\}}} 
   + h_{\comp{9}{\varnothing}}. 
\end{aligned}
\]

A key property of $s_\alpha$ was mentioned already as \eqref{Gessel-ribbon-descent-fact} in the Introduction: 
for any skew shape $\lambda/\mu$, 
the descent map $\Des: \SYT(\lambda/\mu) \longrightarrow 2^{[n-1]}$
has fiber sizes determined by 
\[
\# \Des^{-1}(J) =
\langle s_{\lambda/\mu}, s_{\comp{n}{J}} \rangle
\qquad(\forall J \subseteq [n-1]),
\]
where $\langle -,-\rangle$ is the usual inner product on symmetric functions.

\subsection{Affine ribbon Schur functions}
\label{sec:affine-ribbon-schurs}

We now introduce {\em cyclic} or {\em affine} analogues
of the previous composition and ribbon concepts.  

\begin{definition}
\label{defn_aff}
To each nonempty subset 
$\varnothing \ne J = \{ j_1 < j_2 < \ldots < j_t \} \subseteq [n]$ 
associate (as in \eqref{cyclic-composition-of-set} above) a {\em cyclic composition} of $n$,
\[
\ccomp{n}{J} := (j_2-j_1, \ldots, j_t - j_{t-1}, j_1 + n - j_t).
\]
In particular, $\ccomp{n}{\{j_1\}} := (n)$ while $\ccomp{n}{\varnothing}$ is undefined.
The corresponding {\em affine (or cyclic) ribbon Schur function} is defined (as in \eqref{affine-ribbon-Schur-function} above) by
\[
\cs_{\ccomp{n}{J}} := \sum_{\varnothing \neq I \subseteq J}
(-1)^{\#(J \setminus I)} h_{\ccomp{n}{I}}.
\]
Define also
\[
\cs_{\ccomp{n}{\varnothing}} := 0.
\]
\end{definition}

\begin{example}
We saw that $n=9$ and $J=\{2,6\}$ give rise to the (ordinary) ribbon Schur function
\[
\begin{aligned}
s_{\comp{9}{\{2,6\}}}
&= h_{\comp{9}{\{2,6\}}} - h_{\comp{9}{\{6\}}} - h_{\comp{9}{\{2\}}} 
   + h_{\comp{9}{\varnothing}}\\
&= h_{(2,4,3)} - h_{(6,3)}-h_{(2,7)} + h_{(9)}.
\end{aligned}
\]
By contrast, the corresponding {\em affine} ribbon Schur function is
\[
\begin{aligned}
\cs_{\ccomp{9}{\{2,6\}}}
&= h_{\ccomp{9}{\{2,6\}}} -h_{\ccomp{9}{\{6\}}} -h_{\ccomp{9}{\{2\}}} \\
 &= h_{(4,5)} - h_{(9)}-h_{(9)}
= h_{(4,5)} - 2h_{(9)}.
 \end{aligned}
\]
\end{example}

Our proof strategy for Theorem~\ref{conj1} involves
showing that, whenever $\lambda/\mu$ is a skew shape of size $n$ which is not a connected ribbon, the integers
$m(J):=\langle s_{\lambda/\mu}, \cs_{\ccomp{n}{J}} \rangle$
satisfy conditions (a),(b),(c) of Lemma~\ref{t.cyclic_extension}.
In fact, the nonnegativity condition (a) is the most subtle; 
conditions (b) and (c) follow from the following two propositions.

\begin{proposition}
\label{cyclic-invariance-of-cyclic-compositions}
{\rm (Equivariance)}
For each nonempty subset $\varnothing \ne J \subseteq [n]$, the cyclic composition $\ccomp{n}{p(J)}$ is a cyclic shift of $\ccomp{n}{J}$,
and consequently 
\[
\cs_{\ccomp{n}{p(J)}}=\cs_{\ccomp{n}{J}}.
\]  
\end{proposition}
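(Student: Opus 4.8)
The plan is to verify the statement in two stages: first a purely combinatorial claim about the cyclic compositions $\ccomp{n}{J}$ and $\ccomp{n}{p(J)}$, and then the symmetric-function consequence. Recall that $p(J) = \{j+1 \pmod n : j \in J\}$. First I would dispose of the degenerate case $J = \{j_1\}$: then $p(J)$ is again a singleton, $\ccomp{n}{p(J)} = (n) = \ccomp{n}{J}$, and there is nothing to prove. So assume $|J| = t \ge 2$ and write $J = \{j_1 < \cdots < j_t\}$.

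The main step is to compute $\ccomp{n}{p(J)}$ explicitly and compare it to $\ccomp{n}{J} = (j_2 - j_1, \ldots, j_t - j_{t-1}, j_1 + n - j_t)$. There are two cases according to whether $n \in J$. If $n \notin J$, then $p(J) = \{j_1 + 1 < j_2 + 1 < \cdots < j_t + 1\}$ (still listed in increasing order, since the shift by $1$ does not wrap any element past $n$), and the consecutive differences are unchanged, while the wrap-around part becomes $(j_1 + 1) + n - (j_t + 1) = j_1 + n - j_t$; hence $\ccomp{n}{p(J)} = \ccomp{n}{J}$ on the nose. If $n \in J$, so $j_t = n$, then $p(J) = \{1\} \cup \{j_1 + 1 < \cdots < j_{t-1} + 1\}$, and in increasing order this is $\{1 < j_1 + 1 < \cdots < j_{t-1} + 1\}$; its cyclic composition is $(j_1 + 1 - 1,\ j_2 - j_1,\ \ldots,\ j_{t-1} - j_{t-2},\ 1 + n - (j_{t-1}+1)) = (j_1,\ j_2 - j_1,\ \ldots,\ j_{t-1} - j_{t-2},\ n - j_{t-1})$. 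Comparing with $\ccomp{n}{J} = (j_2 - j_1, \ldots, j_{t-1} - j_{t-2},\ n - j_{t-1},\ j_1)$ (using $j_t = n$), one sees that $\ccomp{n}{p(J)}$ is exactly $\ccomp{n}{J}$ with its last part $j_1$ moved to the front, i.e.\ a single cyclic rotation. This establishes the first assertion of the proposition.

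For the second assertion, I would observe that the affine ribbon Schur function depends only on the cyclic equivalence class of the composition. Concretely, the affine ribbon Schur function of a cyclic composition $\beta = (\beta_1, \ldots, \beta_t)$ of $n$ can be written (via the bijection between cyclic compositions of $n$ up to rotation and the data defining $\cs$) as a sum over sub-cyclic-compositions obtained by merging consecutive cyclic blocks, and this combinatorial structure is manifestly invariant under cyclically rotating $\beta$: rotating $\beta$ merely relabels which blocks are ``consecutive'' in a way that respects the cyclic order, so the multiset of $h_\gamma$'s appearing in $\cs_\beta$ (with signs) is unchanged. Since $h_\gamma$ depends only on the composition $\gamma$ as a multiset of parts, and the set of $\gamma$'s arising from $\ccomp{n}{J}$ versus its rotation $\ccomp{n}{p(J)}$ coincide with the same signs, we get $\cs_{\ccomp{n}{p(J)}} = \cs_{\ccomp{n}{J}}$. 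Alternatively, and perhaps more cleanly, one can index the defining sum in Definition~\ref{defn_aff} directly: the map $I \mapsto p(I)$ is a bijection from nonempty subsets of $J$ to nonempty subsets of $p(J)$ preserving $\#(J \setminus I)$, and by the case analysis above $\ccomp{n}{p(I)}$ is a cyclic rotation of $\ccomp{n}{I}$, hence $h_{\ccomp{n}{p(I)}} = h_{\ccomp{n}{I}}$; summing gives the identity.

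I expect the main obstacle to be bookkeeping in the $n \in J$ case --- keeping the indices straight when $j_1$ wraps to $1$ and gets inserted at the front of the sorted list --- rather than any conceptual difficulty; everything reduces to the trivial observation that $h_\alpha = h_{\alpha'}$ whenever $\alpha'$ is a rearrangement (in particular a cyclic rotation) of $\alpha$, since $h_\alpha = h_{\alpha_1} \cdots h_{\alpha_t}$ and multiplication is commutative.
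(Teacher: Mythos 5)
Your proposal is correct and is exactly the direct verification that the paper leaves implicit (its proof is just ``both assertions follow immediately from the definitions''): the two-case computation of $\ccomp{n}{p(J)}$ and the bijection $I \mapsto p(I)$ on nonempty subsets, combined with $h_\alpha$ being invariant under rearrangement of parts, fill in precisely those details. No gaps; your second, term-by-term argument for the symmetric-function identity is the cleaner one to keep.
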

\begin{proof}
Both assertions follow immediately from the definitions.
\end{proof}

\begin{proposition}\label{t.cor_sum}
{\rm (Extension)}
For each nonempty $\varnothing \ne J \subseteq [n-1]$,
\[
\cs_{\ccomp{n}{J}}+\cs_{\ccomp{n}{J\sqcup\{n\}}}=s_{\comp{n}{J}}.
\]
This also holds for $J = \varnothing$, if $\cs_{\ccomp{n}{\varnothing}}$ is interpreted as 0.
\end{proposition}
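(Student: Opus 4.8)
The plan is to expand both summands on the left using Definition~\ref{defn_aff} and compare the outcome with the expansion \eqref{ribbon-skew-as-alternating-sum} of $s_{\comp{n}{J}}$. Write $J = \{j_1 < \cdots < j_t\} \subseteq [n-1]$, and first expand $\cs_{\ccomp{n}{J \sqcup \{n\}}}$ by splitting the nonempty subsets $I' \subseteq J \sqcup \{n\}$ into those containing $n$ and those not. The subsets with $n \in I'$ are exactly $I' = I \sqcup \{n\}$ for arbitrary $I \subseteq J$ (including $I = \varnothing$, since then $I' = \{n\} \neq \varnothing$), and for these $(J \sqcup \{n\}) \setminus I' = J \setminus I$. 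The subsets with $n \notin I'$ are the nonempty $I' = I \subseteq J$, for which $\#\bigl((J \sqcup \{n\}) \setminus I'\bigr) = \#(J \setminus I) + 1$. Tracking signs, this yields
\[
\cs_{\ccomp{n}{J \sqcup \{n\}}}
= \sum_{I \subseteq J} (-1)^{\#(J \setminus I)} h_{\ccomp{n}{I \sqcup \{n\}}}
- \sum_{\varnothing \ne I \subseteq J} (-1)^{\#(J \setminus I)} h_{\ccomp{n}{I}},
\]
and the last sum is precisely $\cs_{\ccomp{n}{J}}$. Hence $\cs_{\ccomp{n}{J}} + \cs_{\ccomp{n}{J \sqcup \{n\}}} = \sum_{I \subseteq J} (-1)^{\#(J \setminus I)} h_{\ccomp{n}{I \sqcup \{n\}}}$.

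The conceptual heart of the argument is the identity $h_{\ccomp{n}{I \sqcup \{n\}}} = h_{\comp{n}{I}}$ for every $I \subseteq J \subseteq [n-1]$. Writing $I = \{i_1 < \cdots < i_s\}$ (possibly empty, in which case both sides equal $h_{(n)}$), the definitions \eqref{composition-of-set} and \eqref{cyclic-composition-of-set} give $\comp{n}{I} = (i_1, i_2 - i_1, \ldots, i_s - i_{s-1}, n - i_s)$ and $\ccomp{n}{I \sqcup \{n\}} = (i_2 - i_1, \ldots, i_s - i_{s-1}, n - i_s, i_1)$; these have the same multiset of parts — indeed, one is a cyclic rotation of the other, obtained by moving $i_1$ from front to back. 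Since $h_\alpha = h_{\alpha_1} \cdots h_{\alpha_t}$ is a product of commuting factors and so depends only on the multiset of parts of $\alpha$, the claimed equality of $h$'s follows. Substituting back, we obtain $\cs_{\ccomp{n}{J}} + \cs_{\ccomp{n}{J \sqcup \{n\}}} = \sum_{\varnothing \subseteq I \subseteq J} (-1)^{\#(J \setminus I)} h_{\comp{n}{I}}$, which is $s_{\comp{n}{J}}$ by \eqref{ribbon-skew-as-alternating-sum}. For the boundary case $J = \varnothing$ the claim reads $0 + \cs_{\ccomp{n}{\{n\}}} = s_{\comp{n}{\varnothing}}$, i.e.\ $h_{(n)} = h_n$, using $\cs_{\ccomp{n}{\varnothing}} := 0$ and $\ccomp{n}{\{n\}} = (n)$.

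I do not anticipate a serious obstacle here: the proof is essentially combinatorial bookkeeping. The one place demanding care is getting the signs right when the subsets of $J \sqcup \{n\}$ are partitioned according to membership of $n$; the only genuine idea, as opposed to routine computation, is the observation that adjoining $n$ to $I$ converts the ``linear'' ribbon composition $\comp{n}{I}$ into a cyclic rotation of itself, which $h$ cannot detect.
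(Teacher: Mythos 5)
Your proof is correct and follows essentially the same route as the paper: expand the affine ribbon Schur functions by definition, cancel the terms indexed by nonempty $I \subseteq J$ (which appear with opposite signs), and identify the surviving terms via the observation that $\ccomp{n}{I \sqcup \{n\}}$ is a cyclic rearrangement of $\comp{n}{I}$, so $h_{\ccomp{n}{I \sqcup \{n\}}} = h_{\comp{n}{I}}$. The only difference is organizational — you expand a single sum and recognize $-\cs_{\ccomp{n}{J}}$ inside it rather than cancelling across two expanded sums — and your treatment of the boundary case $J = \varnothing$ matches the paper's.
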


\begin{proof}
For $J = \varnothing$ this holds since
\[
\cs_{\ccomp{n}{\{n\}}} 
= s_{\comp{n}{\varnothing}}
= h_{(n)}.
\]
Assume that $J \ne \varnothing$.
By definition,
\[
\cs_{\ccomp{n}{J}} + \cs_{\ccomp{n}{J\sqcup\{n\}}}
= \sum_{\varnothing \neq I \subseteq J} 
 (-1)^{\#(J \setminus I)} h_{\ccomp{n}{I}}
+  \sum_{\varnothing \neq I \subseteq J \sqcup \{n\}}
(-1)^{\#((J \sqcup \{n\}) \setminus I)} h_{\ccomp{n}{I}}.
\]
Each subset $\varnothing \ne I \subseteq J$ contributes 
an $h_{\ccomp{n}{I}}$ to each of the two sums,
but with opposite signs, so they cancel each other.  
The remaining terms (in the second sum)
correspond to subsets $I$ which contain $n$, 
and can be written as 
$I = I' \sqcup \{n\}$ for $I' \subseteq J$:
\[
\cs_{\ccomp{n}{J}} + \cs_{\cc{n}{J\sqcup\{n\}}}
= \sum_{\{n\} \subseteq I \subseteq J \sqcup \{n\}} 
(-1)^{\#((J \sqcup \{n\}) \setminus I)} h_{\ccomp{n}{I}}
= \sum_{I' \subseteq J} 
(-1)^{\#(J \setminus I')} h_{\ccomp{n}{I' \sqcup \{n\}}}.
\]
The cyclic composition $\ccomp{n}{I' \sqcup \{n\}}$ is a (cyclic) rearrangement of the ordinary composition $\comp{n}{I'}$. Thus 
\[
\cs_{\ccomp{n}{J}} + \cs_{\ccomp{n}{J\sqcup\{n\}}}
= \sum_{I' \subseteq J} (-1)^{\#(J \setminus I')} h_{\comp{n}{I'}}
= s_{\comp{n}{J}}. 
\]
\end{proof}

The subtle nonnegativity condition (a) in Lemma~\ref{t.cyclic_extension} will be derived
from the following result, which is the main goal of this section.

\begin{theorem}\label{conj_a3}
For every $J \subseteq [n]$ and non-hook
partition $\lambda\vdash n$, 
\[
\langle \cs_{\ccomp{n}{J}}, s_\lambda \rangle \ge 0.
\]
\end{theorem}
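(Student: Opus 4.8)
The key is to recognize the affine ribbon Schur function $\cs_{\ccomp{n}{J}}$, up to a cyclic-symmetrization, as a specialization of one of Postnikov's \emph{toric Schur polynomials}. Recall that Postnikov's toric Schur polynomial $s_{\lambda/d/\mu}(x_1,\ldots,x_k)$ is defined by summing $\xx^T$ over cylindric semistandard tableaux, and that its expansion in the Schur basis $\{s_\nu(x_1,\ldots,x_k) : \nu \subseteq (d^k) \text{ or so}\}$ has coefficients that are 3-point genus-zero Gromov-Witten invariants of a Grassmannian, hence nonnegative. So the plan is: (1) write down the cylindric shape whose toric Schur polynomial (or a closely related cylindric skew Schur function in the sense of the previous subsection) equals the cyclic symmetrization of $h_{\ccomp{n}{J}}$-type pieces assembled by the inclusion-exclusion \eqref{affine-ribbon-Schur-function}; (2) identify $\langle \cs_{\ccomp{n}{J}}, s_\lambda\rangle$ with a Gromov-Witten invariant, or a nonnegative sum of such; (3) invoke Postnikov's nonnegativity. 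The non-hook hypothesis on $\lambda$ should be exactly what guarantees that $\lambda$ (or its complement inside the relevant $k\times (n-k)$ box) actually fits inside the box indexing the relevant Grassmannian, so that the Gromov-Witten interpretation applies rather than producing a genuinely negative stray term.

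\textbf{Step-by-step.} First I would set up the cylindric shape: given $J$, the cyclic composition $\ccomp{n}{J}$ describes a \emph{ribbon wrapped around a cylinder}, i.e. a cylindric ribbon shape $\Cyl$ of content $n$, and $\cs_{\ccomp{n}{J}}$ is its cylindric (ribbon) Schur function in the sense made precise in Section~\ref{sec:Postnikov} (the inclusion-exclusion in \eqref{affine-ribbon-Schur-function} is precisely the cylindric Jacobi--Trudi-type expansion, analogous to the ordinary one recalled in Subsection~\ref{sec:ribbon-schurs}). Second, I would relate this cylindric Schur function to a toric Schur polynomial: Postnikov shows that $s_{\Cyl}$ expands nonnegatively into ordinary Schur functions $s_\nu$ \emph{when restricted to finitely many variables} $x_1,\ldots,x_k$, with coefficients equal to Gromov--Witten invariants $C^{\nu,d}_{\kappa,\mu}$; taking $k$ large enough (at least $\ell(\lambda)$) and using that $\langle -, s_\lambda\rangle$ only depends on the first $\ell(\lambda)$ variables, I get $\langle \cs_{\ccomp{n}{J}}, s_\lambda\rangle = \langle s_{\Cyl}(x_1,\ldots,x_k), s_\lambda(x_1,\ldots,x_k)\rangle = \sum (\text{GW invariants}) \cdot \langle s_\nu, s_\lambda\rangle$, which is a sum of Gromov--Witten invariants, hence $\ge 0$. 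Third, I would check the bookkeeping that makes this legitimate: the width of the cylinder is $n$ and the relevant Grassmannian is $\mathrm{Gr}(k, n)$ for suitable $k$; the shape $\lambda$ must index a Schubert class there, i.e. $\lambda$ must fit in a $k \times (n-k)$ box, and I would choose $k$ so that this holds for a non-hook $\lambda$ — this is where the hypothesis enters, since a hook $\lambda = (n-j, 1^j)$ has both $\ell(\lambda)$ and $\lambda_1$ large and cannot be squeezed into the box in the needed way, which is consistent with the fact (from Theorem~\ref{conj1}) that hooks are genuine exceptions.

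\textbf{Main obstacle.} The delicate point is the precise translation between the combinatorics of $\cs_{\ccomp{n}{J}}$ (an alternating sum of $h$'s indexed by cyclic sub-compositions of $n$) and Postnikov's cylindric/toric machinery, including getting the roles of $\lambda$ versus its box-complement $\lambda^c$ exactly right and identifying the correct degree $d$ of the Gromov--Witten invariant. In particular, one must verify that the cylindric ribbon shape associated to $\ccomp{n}{J}$ genuinely is a cylindric shape in Postnikov's sense (it wraps around once, content $n$, with the right offset) and that its toric Schur polynomial's Schur expansion is the one we need; the hook case is precisely where the associated "cylindric" ribbon degenerates so that no valid Grassmannian/box accommodates $\lambda$, and spelling out why is the crux. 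I expect the rest — the sign cancellations in the Jacobi--Trudi-type identity, stability in the number of variables, and nonnegativity of $\langle s_\nu, s_\lambda\rangle$ — to be routine once the dictionary is in place.
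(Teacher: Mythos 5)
Your overall strategy is the paper's strategy (relate $\cs_{\ccomp{n}{J}}$ to Postnikov's cylindric/toric Schur machinery and quote nonnegativity of the Gromov--Witten structure constants), but two of the steps you label as routine are exactly where the real work lies, and as stated they fail. First, $\cs_{\ccomp{n}{J}}$ is \emph{not} the cylindric Schur function of the cylindric ribbon $C_J=\lambda/1/\lambda$: the inclusion--exclusion over \emph{nonempty} $I\subseteq J$ in \eqref{affine-ribbon-Schur-function} misses the contribution of the ``all constraints violated'' fillings, which are the constant fillings and contribute $p_n$. The correct identity (Proposition~\ref{toric-zigzag-conjecture} in the paper) is $s_{\lambda/1/\lambda}=\cs_{\ccomp{n}{J}}+(-1)^{\#J}p_n$, proved by inclusion--exclusion over violation sets, not by a Jacobi--Trudi-type sign cancellation. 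The correction term is harmless for your purpose only because $p_n$ expands into hook Schur functions alone, by \eqref{e:power_sum}; that observation is an essential ingredient, not bookkeeping, and without stating and proving the corrected identity your step (1) asserts something false (indeed $\langle \cs_{\ccomp{n}{J}}, s_{(n-k,1^k)}\rangle=\pm 1$ for $k\le \#J-1$, per Corollary~\ref{affine-ribbon-hook-pairing}, so $\cs$ is genuinely not Schur-nonnegative and cannot literally be a cylindric Schur function with a positive toric expansion).

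Second, your variable count is backwards. Postnikov's positivity concerns the toric Schur \emph{polynomial} in exactly $t=\#J$ variables, where $t$ is dictated by the cylinder $C_J$ (shapes $\nu\subseteq t\times(n-t)$, degree $d=1$, $\mu=\lambda$); you are not free to ``take $k$ large enough, at least $\ell(\lambda)$''. In infinitely many (or more than $t$) variables the cylindric Schur function is not Schur-positive --- the hook computation above already exhibits negative coefficients --- so your step (2), if it worked as written, would prove nonnegativity for hook $\lambda$ as well, which is false. What is actually needed is the converse direction: show that the inner product $\langle \cs_{\ccomp{n}{J}}, s_\lambda\rangle$ is already detected in $t$ variables. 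The paper does this by observing (via Young's rule applied to the $h_{\ccomp{n}{I}}$, each with at most $t$ parts) that $\cs_{\ccomp{n}{J}}$ lies in the span of $\{s_\nu : \ell(\nu)\le t\}$; hence for $\ell(\lambda)>t$ the coefficient is $0$, and for $\ell(\lambda)\le t$ it is read off from the $t$-variable toric expansion as $C^{\lambda,1}_{\lambda,\nu}\ge 0$. Relatedly, the non-hook hypothesis does not enter by ``choosing the box to fit $\lambda$'': the box $t\times(n-t)$ is forced, and the point is that no hook of size $n$ fits inside it, so the toric expansion (Proposition~\ref{t.Gromov-Witten-corollary}) involves non-hook shapes only, while the hook part of $\cs_{\ccomp{n}{J}}$ is carried entirely by the $(-1)^{\#J}p_n$ correction. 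Filling in these two points --- the $p_n$ identity and the specialization/length argument --- would turn your outline into the paper's proof.
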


\subsection{Cylindric ribbon shapes and cylindric Schur functions}

The key to Theorem~\ref{conj_a3} is a
relation between the affine ribbon symmetric function
$\cs_{\ccomp{n}{J}}$, defined above, and a special case of 
Postnikov's cylindric Schur functions \cite{Postnikov}
which was introduced implicitly already by 
Gessel and Krattenthaler \cite{GesselKrattenthaler} 
and studied further by McNamara \cite{McNamara}.

In the current subsection we recall the relevant definitions and results regarding cylindric shapes and cylindric Schur functions, in the special case (cylindric ribbons) that we need. These definitions and results are sometimes restated in more convenient terminology, made very explicit when possible.

We start by recalling a special case of Postnikov's {\em cylindric shapes} $\lambda/d/\mu$.

\begin{defn}\label{def:cylindric-ribbon-shape}
(Cf.\ \cite[\S 3]{Postnikov} and \cite[Definition 3.4]{McNamara})
For each subset 
$\varnothing \ne J = \{j_1 < \ldots < j_t\} \subseteq [n]$
define a {\em cylindric ribbon shape} $C_J = \lambda/1/\lambda$ in one of the following equivalent ways:
\begin{itemize}
\item (Postnikov)
The partition
\[
\lambda:= (n - t, j_t - j_1 - (t-1), \ldots, j_3 - j_1 - 2, j_2 - j_1 - 1)
\]
fits inside a $t \times (n-t)$ rectangle, and may be viewed as a lattice path connecting the southwestern and northeastern corners of this rectangle.
Repeat this path, periodically, to obtain an infinite path.
The cylindric ribbon shape $\lambda/1/\lambda$ is the infinite 
ribbon contained between this infinite path and its shift by one step eastward (equivalently, southward).

\item
Consider the cyclic composition 
\[
\ccomp{n}{J}=(j_2-j_1,j_3-j_2,\ldots,j_1+n-j_t).
\]
The cylindric ribbon shape $C_J$ is the infinite ribbon whose sequence of row lengths (from southwest to northeast) is the sequence of parts of this composition, repeated periodically.

\item
Consider the cyclic composition $\ccomp{n}{J}$ above, and let $R_J$ be the corresponding (finite) ribbon shape. Its
northwest boundary is given by the partition $\lambda$ above, except that the first part should be $n-t+1$ rather than $n-t$.
Denote by $a$ (respectively, $b$) the extreme southwestern (respectively, northeastern) square of this ribbon shape.
The cylindric ribbon shape $C_J = \lambda/1/\lambda$ is an infinite 
ribbon made up of copies $(R_i)_{i \in \ZZ}$ of the ribbon $R_J$, placed in the plane such that 
square $a$ of $R_{i+1}$ is immediately north of square $b$ of $R_i$, for all $i \in \ZZ$.
\end{itemize}
\end{defn}

\begin{example}\label{example:cylindric-ribbon-shape}
Let $J=\{1,4,5,8\} \subseteq [9]$, so that $n = 9$ and $t = \#J = 4$.
The corresponding partition and cyclic composition are $\lambda = (5,4,2,2)$ and $\ccomp{n}{J}=(3,1,3,2)$, respectively.
The (finite) ribbon shape $R_J$ is
\[
R_J \, = \,
\ytableausetup{boxsize=\mysize}
\begin{ytableau}
\none & \none & \none & \none &       & b   \\
\none & \none &       &       &       & \none \\
\none & \none &       & \none & \none & \none\\
 a    &       &       & \none & \none & \none
\end{ytableau}
\]
and the cylindric ribbon shape is
\[
C_J = \lambda/1/\lambda = \,
\ytableausetup{boxsize=\mysize}
\begin{ytableau}
 \none & \none & \none & \none & \none & \none & 
 \none & \none & \none & \none & \cdots  \\
 \none & \none & \none & \none & \none & \none & 
 \none & \none & a     &       &   \\
 \none & \none & \none & \none & \none & \none & 
 \none &          & b \\
 \none & \none &\none &\none & \none &           &           &           \\
 \none & \none &\none &\none & \none &           \\
 \none & \none &\none &a       &           &           \\
 \none & \none  &         &b\\
 \cdots &           &          
 \end{ytableau}
\]

%

\end{example}

\begin{remark}\label{remark:extreme_J}
There are two extreme cases deserving special attention.
\begin{itemize}
\item[1.]
In the extreme case $J = [n]$, the partition $\lambda = (0^n)$ corresponds to a vertical path, and the cyclic composition $\ccomp{n}{J}= (1^n)$. The finite ribbon $R_J$ is a column of length $n$, and the cylindric ribbon $C_J = \lambda/1/\lambda$ is an infinite column.
\item[2.]
The other extreme case, $J = \varnothing$, is formally outside the scope of Definition~\ref{def:cylindric-ribbon-shape}. Nevertheless, it is natural to associate with it a row of length $n$ as the finite ribbon $R_J$, and an infinite row as the cylindric ribbon $C_J$.
\end{itemize}
\end{remark}

\medskip


Recall now, from \cite{Postnikov}, the definition of a {\em toric} shape (here -- in the ribbon case).
\begin{definition}\label{def:toric_shape}{\rm \cite[Definition 3.2 and Lemma 3.3]{Postnikov}}
A cylindric ribbon shape $C_J = \lambda/1/\lambda$ is called {\em toric} if (each) one of the following equivalent conditions holds:
\begin{enumerate}
\item
Each row of $C_J$ has length at most $n-t$.
\item
Each column of $C_J$ has length at most $t$.
\item
The first and last columns of $R_J$ have no squares in the same row. 
\end{enumerate}
\end{definition}

\begin{example}
The cylindric ribbon shape in example~\ref{example:cylindric-ribbon-shape} is toric: 
the row lengths of $C_J$ do not exceed $n-t = 5$, 
its column lengths do not exceed $t = 4$, and
the first and last columns of $R_J$ have no squares in the same row.
On the other hand, for the same parmeters $n = 9$ and $t = 4$, the set $J = \{1,2,3,9\} \subseteq [9]$ yields $\lambda = (5,5,0,0)$ and $\ccomp{n}{J} = (1,1,6,1)$, with finite ribbon
\[
R_J \, = \,
\ytableausetup{boxsize=\mysize}
\begin{ytableau}
\none & \none & \none & \none & \none & b   \\
      &       &       &       &       &     \\
      & \none \\
 a    & \none
\end{ytableau}
\]
and cylindric ribbon
\[
C_J = \lambda/1/\lambda = \,
\ytableausetup{boxsize=\mysize}
\begin{ytableau}
 \none & \none & \none & \none & \none & 
 \none & \none & \none & \none & \none & \cdots \\
 \none & \none & \none & \none & \none & 
 \none & \none & \none & \none & \none & a \\
 \none & \none & \none & \none & \none & 
 \none & \none & \none & \none & \none & b \\
 \none & \none & \none & \none & \none & 
       &       &       &       &       & \\
 \none & \none & \none & \none & \none &  \\
 \none & \none & \none & \none & \none & a \\
 \none & \none & \none & \none & \none & b \\
 \cdots &      &       &       &       &   
 \end{ytableau}
\]
This cylindric ribbon shape is {\em not} toric: 
$C_J$ has a row of length $6 > 5 = n-t$
and also a column of length $5 > 4 = t$; and
the first and last columns of $R_J$ have a square in the same row.

\end{example} 

The term {\em toric} reflects a geometric property, as follows.
Consider the cylinder and torus
$$
\begin{aligned}
\Cyl &:= \ZZ^2/(n-t,t)\ZZ,\\
\Tor &:= \ZZ^2/((n-t,0)\ZZ+(0,t)\ZZ),
\end{aligned}
$$
and the natural projection $\pi : \Cyl \to \Tor$. Choosing the coordinate axes properly, a cylindric ribbon shape $C_J \subseteq \ZZ^2$ corresponds to a finite subset of $\Cyl$. The shape $C_J$ is toric if and only if the restriction of $\pi$ to this finite subset is injective, so that, in a sense, $C_J$ can be embedded into the torus.

\begin{lemma}\label{t:toric-conditions}
Let $\varnothing \ne J = \{j_1 < \ldots < j_t\} \subseteq [n]$.
The following conditions are equivalent:
\begin{enumerate}
\item
The cylindric ribbon shape $C_J = \lambda/1/\lambda$ is not toric.
\item
The cyclic composition $\ccomp{n}{J}$ has at most one part of size greater than $1$.
\item
$J$ is a cyclic shift of the set $\{1, 2, \ldots, t\}$.
\item
The ribbon shape $R_J$ is a cyclic shift (in terms of row lengths) of the hook shape $(n-t+1,1^{t-1})$.
\item
$\lambda = ((n-t)^i, 0^{t-i})$ for some $1 \le i \le t$.
\end{enumerate}
\end{lemma}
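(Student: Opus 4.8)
The plan is to prove Lemma~\ref{t:toric-conditions} by establishing a cycle of implications among the five conditions, using the explicit combinatorial translations already recorded in Definition~\ref{def:cylindric-ribbon-shape} and Definition~\ref{def:toric_shape}. The underlying observation is that all five statements are cyclic-shift invariant (condition (1) via Proposition~\ref{cyclic-invariance-of-cyclic-compositions}'s reasoning, conditions (2)--(5) transparently), so it suffices to track how the composition $\ccomp{n}{J}$ behaves under the operations relating $J$, $R_J$, $\lambda$, and $C_J$.

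First I would record the chain $(2)\Leftrightarrow(3)\Leftrightarrow(4)$, which is essentially bookkeeping: a cyclic composition $\ccomp{n}{J}=(c_1,\dots,c_t)$ of $n$ into $t$ parts has at most one part exceeding $1$ if and only if $t-1$ of the parts equal $1$ and one part equals $n-t+1$, i.e.\ $\ccomp{n}{J}$ is a cyclic shift of $(n-t+1,1^{t-1})$; by Definition~\ref{def:cylindric-ribbon-shape} this composition is exactly the row-length sequence of $R_J$, giving (4), and by \eqref{cyclic-composition-of-set} the gaps $j_{i+1}-j_i$ (cyclically) being all $1$ except one forces $J$ to be $t$ consecutive integers mod $n$, i.e.\ a cyclic shift of $\{1,\dots,t\}$, giving (3). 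Next I would connect (3) and (5): by the Postnikov description of $\lambda$ in Definition~\ref{def:cylindric-ribbon-shape}, the partition $\lambda=(n-t,\,j_t-j_1-(t-1),\dots,j_2-j_1-1)$ has all entries from the second on equal to $n-t$ or all equal to $0$ (more precisely, a weakly decreasing staircase that is rectangular) precisely when the $j_i$ are consecutive up to a cyclic wrap, and a short case analysis on where the wrap occurs yields $\lambda=((n-t)^i,0^{t-i})$; conversely such a $\lambda$ forces the $j$'s to be consecutive mod $n$. Finally I would close the loop with $(1)\Leftrightarrow(5)$ (or $(1)\Leftrightarrow(2)$): using Definition~\ref{def:toric_shape}(1), $C_J$ fails to be toric iff some row of $C_J$ has length $>n-t$; since the row lengths of $C_J$ are the parts of $\ccomp{n}{J}$ repeated periodically and these sum to $n$ over each period of $t$ rows, a part exceeding $n-t$ is possible only if all other parts in the period equal $1$ — which is exactly condition (2), and via the already-established equivalences, condition (5). (Alternatively, Definition~\ref{def:toric_shape}(3) says non-toric means the first and last columns of $R_J$ share a row, which for a ribbon happens iff $R_J$ is a cyclic shift of a hook, i.e.\ condition (4).)

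The main obstacle I anticipate is the bookkeeping in the $(3)\Leftrightarrow(5)$ step: the Postnikov formula for $\lambda$ is written relative to the basepoint $j_1$, so verifying that "$J$ consecutive mod $n$" corresponds to "$\lambda$ rectangular of the stated form" requires being careful about the cyclic wrap — when $J=\{k+1,k+2,\dots,k+t\}\bmod n$, the differences $j_{i+1}-j_1$ jump by $n$ at the wraparound index, and one must check that after the $-( \text{shift})$ corrections the resulting partition is indeed $((n-t)^i,0^{t-i})$ with $i$ determined by the wrap position. Since we already know (via Proposition~\ref{cyclic-invariance-of-cyclic-compositions}) that replacing $J$ by $p(J)$ only cyclically shifts $\ccomp{n}{J}$, one clean way to sidestep the mess is to prove the equivalences first for the specific representative $J=\{1,2,\dots,t\}$ (where $\lambda=((n-t)^1,0^{t-1})$ trivially and every condition is immediate), then invoke cyclic-shift invariance of all five conditions to transfer the result to every $J$ in that cyclic class, and separately argue that any $J$ \emph{not} a cyclic shift of $\{1,\dots,t\}$ violates each of (1)--(5). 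That reduction turns the whole lemma into checking (a) one base case and (b) the cyclic-invariance of each condition, both of which are short.
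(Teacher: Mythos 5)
Your main argument is correct and essentially the paper's own proof: the key step (1)$\Leftrightarrow$(2) uses the identical observation that the row lengths of $C_J$ are the parts $\alpha_1,\ldots,\alpha_t$ of the cyclic composition with $(\alpha_1-1)+\cdots+(\alpha_t-1)=n-t$, so a row longer than $n-t$ forces all other parts to equal $1$, and the remaining equivalences (2)--(5) are the same bookkeeping the paper dismisses as easy. One caution about your proposed shortcut at the end: reducing to the base case $J=\{1,\ldots,t\}$ plus cyclic invariance only gives that every cyclic shift of $\{1,\ldots,t\}$ satisfies all five conditions, while the step you label ``separately argue that any $J$ not a cyclic shift of $\{1,\ldots,t\}$ violates each of (1)--(5)'' is precisely the substantive direction, so that route does not actually save the work of your first two paragraphs.
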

\begin{proof}
By Definition~\ref{def:toric_shape}, the cylindric shape $C_J$ is not toric if and only if it has a row of length greater than $n-t$.
The row lengths of $C_J$ are the parts of the cyclic composition $\ccomp{n}{J} = (\alpha_1, \ldots, \alpha_t)$.
Since
\[
(\alpha_1 - 1) + \ldots + (\alpha_t - 1) = n - t
\]
with all summands nonnegative, 
a summand $\alpha_i - 1$ can be (at least) $n-t$ if and only if all other summands are zero. It is easy to see that this is equivalent to each of (2)--(5).
\end{proof}


\begin{definition}\label{def:cylindric-Schur}{\rm \cite[\S 5]{Postnikov}}
Let $\lambda/1/\lambda$ be a cylindric ribbon shape. Define the corresponding {\em cylindric Schur function} by
\[
s_{\lambda/1/\lambda}(x_1,\ldots) := \sum_T {\xx}^T,
\]
where summation is over all {\em semistandard cylindric tableaux} $T$ filling the shape $\lambda/1/\lambda$ with entries in $\{1,2,\ldots\}$. This means that the entries of $T$ are ``$(t,n)$-periodic'' (see \cite[Figure 4]{Postnikov}), weakly increasing from left to right in rows and strictly increasing from top to bottom in columns.
Equivalently, such a filling corresponds to a semistandard tableau $T$ of the (finite) ribbon shape $R_J$ with the extra condition
\[T_a < T_b\,,
\]
where the squares $a, b \in R_J$ are as in Definition~\ref{def:cylindric-ribbon-shape}.
\end{definition}

\begin{example}\label{example-cylindric-tableau}
A semistandard cylindric tableau filling the cylindric ribbon shape of Example~\ref{example:cylindric-ribbon-shape} is, e.g.,
\[
\ytableausetup{boxsize=\mysize}
\begin{ytableau}
 \none & \none & \none & \none & \none & 
 \none & \none & \none & \none & \none & \cdots \\
 \none & \none & \none & \none & \none & 
 \none & \none & \none & 1     & 4     & 4 \\
 \none & \none & \none & \none & \none & 
 \none & \none & 3     & 7 \\
 \none & \none & \none & \none & \none & 
 2 & 2 & 5 \\
 \none & \none & \none & \none & \none & 3 \\
 \none & \none & \none & 1     & 4     & 4 \\
 \none & \none & 3     & 7 \\
 \cdots & 2 & 5         
 \end{ytableau}
\]
The corresponding semistandard tableau $T$ of the (finite) ribbon shape $R_J$ with the extra condition $T_a < T_b$ is
\[
\ytableausetup{boxsize=\mysize}
\begin{ytableau}
\none & \none & \none & \none & 3 & 7 \\
\none & \none & 2 & 2 & 5 \\
\none & \none & 3 \\
 1 & 4 & 4
\end{ytableau}
\qquad (1 < 7).
\]
\end{example}

\subsection{Affine vs.\ cylindric ribbon Schur functions}

Our next result shows that our affine ribbon Schur functions 
(Definition~\ref{sec:affine-ribbon-schurs}) are almost the same as Postnikov's cylindric ribbon Schur functions (Definition \ref{def:cylindric-Schur}).
To state it, recall the {\em power sum} symmetric function defined by
\[
p_n := x_1^n + x_2^n + \ldots
\]
and its well-known  \cite[Theorem 7.17.1 for $\mu = \varnothing$]{EC2} 
expansion into Schur functions
\begin{equation}\label{e:power_sum}
p_n = \sum_{k = 0}^{n-1} (-1)^{k} s_{(n-k,1^k)}.
\end{equation}

\begin{proposition}
\label{toric-zigzag-conjecture}
For $\varnothing \ne J \subseteq [n]$, with associated cylindric ribbon shape $C_J = \lambda/1/\lambda$, one has
\[
s_{\lambda/1/\lambda} = \cs_{\ccomp{n}{J}} + (-1)^{\#J} p_n.
\]
\end{proposition}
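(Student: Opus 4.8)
The plan is to compare the two sides by expanding each into the $h$-basis (equivalently, into products $h_\alpha$) and matching coefficients, using the combinatorial description of cylindric Schur functions afforded by Definition~\ref{def:cylindric-Schur}. I will work with the finite ribbon $R_J$ picture: a semistandard filling of $C_J$ corresponds to a semistandard filling $T$ of $R_J$ with the extra cyclic constraint $T_a < T_b$, where $a$ and $b$ are the extreme southwest and northeast squares. Thus
\[
s_{\lambda/1/\lambda} = \sum_{T \in \mathrm{SSYT}(R_J),\, T_a < T_b} \xx^T
= s_{R_J} - \sum_{T \in \mathrm{SSYT}(R_J),\, T_a \ge T_b} \xx^T,
\]
and $s_{R_J} = s_{\comp{n}{J}} = s_{\comp{n}{J'}}$ where $J' \subseteq [n-1]$ is obtained from the cyclic composition $\ccomp{n}{J}$ by reading it starting from the part that begins at the square $a$; more precisely $\ccomp{n}{J'\sqcup\{n\}}$ is a cyclic rotation of $\ccomp{n}{J}$ (cf.\ the manipulations in the proof of Proposition~\ref{t.cor_sum}).

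The heart of the matter is to identify the ``bad'' sum $\sum_{T_a \ge T_b} \xx^T$. Here $a$ is in the bottom row and $b$ is in the top row of the ribbon $R_J$. In any semistandard filling of a ribbon, the entries weakly increase as one moves along the ribbon from the southwest end to the northeast end {\em except} that they strictly increase at each ``descent'' (a vertical step). So the chain of inequalities from $T_a$ up to $T_b$ forces $T_a \le T_b$ always, with equality forcing every vertical step to actually be an equality --- impossible unless there are {\em no} vertical steps strictly between, i.e.\ unless the ribbon is a single row. Wait: that would make the bad sum almost always empty, which is false. The correct reading is the reverse cyclic comparison: the condition defining $C_J$ is $T_a < T_b$ with $b$ {\em north} of $a$ across the period, so "bad" means $T_a \ge T_b$, and one should instead glue the ribbon the {\em other} way. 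Concretely, cutting the cylinder along a different vertical line, a filling with $T_a \ge T_b$ is exactly a semistandard filling of the ribbon obtained by appending the bottom row of one copy on top of the top row of the previous copy so that those two rows {\em overlap in a column}; the resulting finite skew shape is again a ribbon, whose row-length composition is $\ccomp{n}{I}$ for the various $I \subsetneq J$, and the signs $(-1)^{\#(J\setminus I)}$ arise exactly as in the inclusion--exclusion defining $s_{R_J}$. Summing over all ways to do this collapsing produces the alternating sum $\sum_{\varnothing \ne I \subsetneq J} \pm h_{\ccomp{n}{I}}$, which is precisely $s_{\comp{n}{J'}} - \cs_{\ccomp{n}{J}}$ up to the top term $h_{\ccomp{n}{J}}$; reorganizing gives $s_{\lambda/1/\lambda} = \cs_{\ccomp{n}{J}} + (\text{error})$.

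To pin down the error term $(-1)^{\#J} p_n$, I will track exactly which terms survive. The telescoping above identifies every term $h_{\ccomp{n}{I}}$ for $\varnothing \ne I \subseteq J$ with the correct sign, reproducing $\cs_{\ccomp{n}{J}}$; the genuinely leftover contributions are the configurations in which the overlap wraps all the way around, collapsing the entire ribbon $R_J$ down to a {\em single column of length $n$} or, dually, where it never wraps. These extreme configurations contribute $s_{(1^n)}$ and $s_{(n)}$ together with the intermediate hook terms $s_{(n-k,1^k)}$, each with sign $(-1)^k$ times the overall sign $(-1)^{\#J}$, and by the classical expansion \eqref{e:power_sum} this is exactly $(-1)^{\#J} p_n$. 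So the plan is: (1) write $s_{\lambda/1/\lambda}$ via semistandard fillings of $R_J$ with the cyclic constraint; (2) perform inclusion--exclusion on the ``wrap'' locus to produce all $h_{\ccomp{n}{I}}$ terms of $\cs_{\ccomp{n}{J}}$; (3) collect the residual hook terms and recognize them as $(-1)^{\#J}p_n$ via \eqref{e:power_sum}.

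The main obstacle I anticipate is step (2): making the bijective/inclusion--exclusion argument on fillings precise, in particular correctly handling how the cyclic constraint $T_a < T_b$ interacts with semistandardness at the seam, and verifying that the signs and the periodicity bookkeeping match the definition of $\cs_{\ccomp{n}{J}}$ exactly --- rather than, say, off by one part or by a cyclic rotation. A cleaner alternative, which I would try in parallel, is a purely algebraic proof: expand both sides in the $h$-basis using the Jacobi--Trudi-type cylindric determinant formula for $s_{\lambda/1/\lambda}$ (available from Postnikov or McNamara), and check that the difference of the two sides, written in $h$'s, telescopes to $(-1)^{\#J}p_n$ by a direct sign-reversing involution on subsets $I \subseteq J$; the power sum appears because the only non-cancelling subsets are $\varnothing$ and those $I$ whose cyclic composition degenerates to a hook, which is governed by Lemma~\ref{t:toric-conditions}. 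Either way, the endgame invokes \eqref{e:power_sum}; the delicate part is the combinatorial/sign bookkeeping to reach it.
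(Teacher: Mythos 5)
Your outline has the right ingredients in the air (inclusion--exclusion, a constant-filling/hook residue, equation \eqref{e:power_sum}), but the central step --- your step (2) --- is not set up correctly, and the identification of the error term is asserted rather than derived. Starting from $s_{R_J}$ and subtracting the fillings with $T_a \ge T_b$ does not produce $h$-terms: the ``bad'' set is \emph{not} the set of semistandard fillings of any finite ribbon. Imposing $T_b \le T_a$ on top of the $t-1$ strict column seams of $R_J$ creates a constraint \emph{cycle} ($t-1$ strict seams plus one weak seam), and this cycle cannot be unrolled into a finite skew shape: gluing the top row to the left of the bottom row would require row $t$ to sit simultaneously at the bottom of the configuration and above row $t-1$. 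So the quantity $\sum_{T_a\ge T_b}\xx^T$ is again a cylindric-type sum (for a merged cyclic composition with $t-1$ parts), not ``a semistandard filling of a ribbon whose row-length composition is $\ccomp{n}{I}$,'' and the claimed collapsing/telescoping to $\sum_{\varnothing\ne I\subsetneq J}\pm h_{\ccomp{n}{I}}$ does not go through as described. (Your opening monotonicity claim, that entries weakly increase from $a$ to $b$ along the ribbon so $T_a\le T_b$ always, is also false: at each column seam the first entry of the upper row is \emph{strictly smaller} than the last entry of the lower row.) Likewise, the assertion that the leftover configurations contribute $s_{(n)}$, $s_{(1^n)}$ and the intermediate hooks $s_{(n-k,1^k)}$ with signs $(-1)^k$ is exactly the content that needs proof; nothing in the described collapsing produces hook Schur functions, and invoking \eqref{e:power_sum} at that point is reverse-engineering the answer.

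The clean way to organize the argument (and the paper's actual proof) runs the inclusion--exclusion in the \emph{opposite} direction: write $h_{\ccomp{n}{J}}=\sum_T\xx^T$ over fillings of the disjoint horizontal strip $\ccomp{n}{J}^{\oplus}$, observe that $s_{\lambda/1/\lambda}$ is the subsum over fillings satisfying all $t$ cyclic strict inequalities $x_i<y_{i-1}$ (first entry of each row versus last entry of the previous row, indices mod $t$), and inclusion--exclude over the violation set $A\subseteq[t]$. For $A\ne[t]$, each violated seam lets you juxtapose the two rows into one, so the inner sum is $h_{\ccomp{n}{I}}$ with $\varnothing\ne I\subseteq J$, reproducing $\cs_{\ccomp{n}{J}}$ with the correct signs; for $A=[t]$ the filling is forced to be constant, contributing $p_n$ with sign $(-1)^{t}$. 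In particular $p_n$ appears intrinsically, with no need for \eqref{e:power_sum}. Your fallback plan (cylindric Jacobi--Trudi plus a sign-reversing involution on subsets $I\subseteq J$) is plausible but is only a plan; as written, neither route in your proposal constitutes a proof.
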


\begin{proof}
As explained in \eqref{every-h-is-a-skew-schur}, one has
\begin{equation}
\label{cyclic-h-tableau-sum}
h_{\ccomp{n}{J}} = \sum_T {\xx}^T
\end{equation}
summing over all semistandard tableaux $T$ 
filling the horizontal strip $\ccomp{n}{J}^\oplus$.
For $i=1,2,\ldots,t$ consider the $i^{th}$ row (from the bottom) of this horizontal strip, 
and label its leftmost and rightmost entries by $x_i$ and $y_i$, respectively.
For Example~\ref{example:cylindric-ribbon-shape} with $n=9$, $J=\{1,4,5,8\}$ and $\ccomp{n}{J}=(3,1,3,2)$, the horizontal strip is:
\[
\ytableausetup{boxsize=\mysize}
\begin{ytableau}
 \none & \none & \none & \none & \none & 
 \none & \none & x_4   & y_4   \\
 \none & \none & \none & \none & x_3   & 
       &  y_3  & \none & \none \\
 \none & \none & \none & \bullet \\
 x_1  &       & y_1   & \none & \none & 
 \none& \none & \none & \none
\end{ytableau}
\qquad (\bullet = x_2 = y_2)
\]
Comparing this to the corresponding (finite) ribbon
\[
R_J = \,
\ytableausetup{boxsize=\mysize}
\begin{ytableau}
 \none & \none & 
 \none & \none & x_4   & y_4   \\
 \none & \none & x_3   & 
       &  y_3  & \none & \none \\
 \none & \none & \bullet \\
 x_1  &       & y_1   & \none & \none & 
 \none& \none & \none & \none
\end{ytableau}
\qquad (\bullet = x_2 = y_2)
\]
we see that $s_{\lambda/1/\lambda}$ is equal to the same sum as in \eqref{cyclic-h-tableau-sum}, but only over those
$T$ which satisfy the strict inequalities 
\[
x_i < y_{i-1} \qquad (1 \le i \le t),
\]
where subscripts are interpreted modulo $t$ so that the first inequality is $x_1 < y_0 = y_t$.

Define, for each $T$ appearing in \eqref{cyclic-h-tableau-sum},
its {\em violation set} 
\[
V(T) := \{i \,:\, y_{i-1} \le  x_i\} \subseteq [t].
\]
Inclusion-exclusion gives
\[
s_{\lambda/1/\lambda}
= \sum_{T\,:\, V(T) = \varnothing} \xx^T
= \sum_{A \subseteq [t]} (-1)^{\#A} \sum_{T \,:\, V(T) \supseteq A} \xx^T.
\]
A violation at $i$ means that $y_{i-1} \le x_i$, 
so that row $i$ (from the bottom) may be juxtaposed 
at the end of row $i-1$, keeping $T$ semistandard . 
Thus, for each $A \subseteq [t]$ other than $A = [t]$,
the rightmost sum is over all semistandard tableaux filling
the horizontal strip $\ccomp{n}{I}$ for 
$I := \{j_i \,;\, i \in [t] \setminus A\} \ne \varnothing$,
and hence the sum is $h_{\ccomp{n}{I}}$.  
For $A=[t]$ (i.e., $I = \varnothing$),
the sum is over the {\em constant} fillings $T$, 
yielding $x_1^n + x_2^n + \ldots = p_n$.
Therefore
\[
s_{\lambda/1/\lambda}
= (-1)^{\#J} p_n
+ \sum_{\varnothing \neq I \subseteq J} 
(-1)^{\# (J \setminus I)} h_{\ccomp{n}{I}}
= (-1)^{\#J} p_n + \cs_{\ccomp{n}{J}}. 
\]
\end{proof}

For an algebraic consequence of the toric property of a shape, consider the specialization $x_{t+1} = x_{t+2} = \ldots = 0$.
\begin{proposition}\label{t:non-toric-vanishes}{\rm \cite[Lemma 5.2]{Postnikov}}
The cylindric (toric) Schur polynomial $s_{\lambda/1/\lambda}(x_1, \ldots, x_t)$ is nonzero if and only if the shape $\lambda/1/\lambda$ is toric.
\end{proposition}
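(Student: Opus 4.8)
The plan is to prove Proposition~\ref{t:non-toric-vanishes} by combining the combinatorial description of cylindric Schur polynomials in Definition~\ref{def:cylindric-Schur} with the characterization of toricity in Definition~\ref{def:toric_shape}(2). Recall that $s_{\lambda/1/\lambda}(x_1,\ldots,x_t)$ is obtained from $s_{\lambda/1/\lambda}(x_1,x_2,\ldots)$ by the specialization $x_{t+1}=x_{t+2}=\cdots=0$, so it equals $\sum_T \xx^T$ summed over those semistandard cylindric tableaux $T$ whose entries all lie in $\{1,2,\ldots,t\}$. By the ``equivalently'' clause of Definition~\ref{def:cylindric-Schur}, such tableaux correspond to semistandard fillings of the finite ribbon $R_J$ with entries in $[t]$ satisfying the cyclic strictness condition $T_a<T_b$. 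Thus the polynomial is nonzero if and only if at least one such filling exists, and the whole proof reduces to an existence question.

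The key step is to show that such a filling of $R_J$ with entries in $[t]$ and $T_a<T_b$ exists if and only if $C_J$ is toric. For the ``if'' direction, assume $C_J$ is toric, so by Definition~\ref{def:toric_shape}(2) every column of $C_J$ has length at most $t$; equivalently (and more usefully here) the columns of the finite ribbon $R_J$ all have height at most $t$, and by part (3) the first and last columns of $R_J$ occupy disjoint sets of rows. I would construct an explicit filling: traverse the ribbon $R_J$ from its southwestern square $a$ to its northeastern square $b$, assigning to each square the value (number of squares in its column at or above it), i.e. fill each column from top to bottom with $1,2,3,\ldots$. This is automatically weakly increasing along rows (each row-step stays in the same row, moving to a taller-or-equal column position appropriately — one checks this against the ribbon's zigzag structure) and strictly increasing down columns by construction, and since $a$ is at the bottom of the first column it gets a value equal to that column's height while $b$ sits at the top of the last column and gets value $1$ — wait, this needs the refinement that one should fill so the constraint $T_a<T_b$ holds; the cleanest choice is to fill the $i$-th column (counting from the left) so that its entries are an interval of $[t]$ positioned to respect overlaps with neighboring columns, using toricity to guarantee the first and last columns can be placed in disjoint intervals with $T_a$ small and $T_b$ large. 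For the ``only if'' direction, suppose $C_J$ is not toric; then by Definition~\ref{def:toric_shape}(2) some column of $C_J$ has length exceeding $t$, which forces a column of the cylindric tableau to need more than $t$ strictly increasing entries drawn from $[t]$ — impossible — so no valid filling with entries in $[t]$ exists and the polynomial vanishes. (Alternatively, via part (3): if the first and last columns of $R_J$ share a row, then in the periodic cylindric tableau this row's entry would have to be strictly less than itself after going once around.)

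The main obstacle I anticipate is the bookkeeping in the ``if'' direction: producing an explicit semistandard filling of the cylindric ribbon with entries in exactly $\{1,\ldots,t\}$ that simultaneously satisfies row-weak-increase, column-strict-increase, and the wrap-around condition $T_a<T_b$. The issue is that consecutive columns of a ribbon overlap in exactly one row, and one must propagate a consistent choice of ``vertical offset'' for each column all the way around the cylinder; toricity (no column taller than $t$, plus the disjointness of first and last columns of $R_J$) is exactly what makes such a consistent global choice possible, but verifying that the greedy left-to-right assignment never runs out of room in $[t]$ requires the inequality $(\alpha_1-1)+\cdots+(\alpha_t-1)=n-t$ from the proof of Lemma~\ref{t:toric-conditions} together with the toric bound that each $\alpha_i\le n-t$. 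I would organize this as: first reduce to the finite ribbon $R_J$ with the extra constraint; then prove the nonexistence half (easy, as above); then for existence, set up the column-by-column assignment and check the two local conditions and the one global closure condition, invoking toricity at each check. This is the ``Lemma 5.2 of Postnikov'' referenced in the statement, so citing and lightly re-deriving it in the ribbon case is legitimate.
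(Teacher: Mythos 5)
First, note that the paper does not prove this proposition at all --- it is quoted verbatim from Postnikov \cite[Lemma 5.2]{Postnikov} --- so there is no internal argument to match; your attempt has to stand on its own. Your reduction to fillings of $R_J$ with entries in $[t]$ subject to $T_a<T_b$, and your ``only if'' direction (a column of $C_J$ of length exceeding $t$ cannot be filled strictly from $[t]$), are both correct. The problem is the ``if'' direction, which is where the actual content lies: you never produce the required filling. Your first concrete proposal (fill each column top to bottom with $1,2,3,\ldots$) is abandoned mid-sentence when you notice the wrap-around constraint, and the replacement --- ``fill the $i$-th column so that its entries are an interval of $[t]$ positioned to respect overlaps\ldots using toricity to guarantee the first and last columns can be placed in disjoint intervals'' --- is a plan, not an argument; you yourself flag the ``bookkeeping'' as an unresolved obstacle. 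As written, existence of a semistandard cylindric tableau with entries in $[t]$ for every toric $C_J$ is asserted, not proved, so the proposal has a genuine gap.

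The gap is fixable, and in fact your first instinct was essentially right if you compute depths on the cylinder rather than inside the finite ribbon $R_J$: assign to each cell of the infinite periodic ribbon $C_J$ the number of cells of its column lying at or above it. This filling is periodic; it is strictly increasing down columns by construction; it is weakly increasing along rows because in a ribbon every cell that has a right neighbour in its row is the top cell of its column, hence gets value $1$; its entries are bounded by the maximum column length of $C_J$, which is at most $t$ exactly when $C_J$ is toric; and the condition $T_a<T_b$ is automatic, since $a$ of one copy of $R_J$ sits directly above $b$ of the copy below it in the same column, so their depths differ by one. (Alternatively, one can argue row by row on $R_J$: writing $x_i,y_i$ for the first and last entries of row $i$, the cyclic constraints $x_{i+1}<y_i$ are satisfiable in $[t]$ precisely when at least two parts of $\ccomp{n}{J}$ exceed $1$, which by Lemma~\ref{t:toric-conditions} is the toric condition.) Either of these completes the direction you left open; without one of them, the proof is incomplete.
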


Together with Lemma~\ref{t:toric-conditions}, this implies
\begin{corollary}\label{t:hook-vanishes}
For $\lambda = ((n-t)^i, 0^{t-i})$  with $1 \le i \le t$,
\[
s_{\lambda/1/\lambda}(x_1, \ldots, x_t) = 0.
\]
\end{corollary}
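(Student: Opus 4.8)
The plan is to combine Proposition~\ref{t:non-toric-vanishes} with the classification in Lemma~\ref{t:toric-conditions}. The key point is that a cylindric ribbon shape $\lambda/1/\lambda$ fails to be toric precisely when its column lengths can exceed $t$, which by the equivalences $(1)\Leftrightarrow(5)$ of Lemma~\ref{t:toric-conditions} happens exactly for the shapes $\lambda = ((n-t)^i, 0^{t-i})$ with $1 \le i \le t$. Proposition~\ref{t:non-toric-vanishes} then says that a non-toric cylindric Schur polynomial $s_{\lambda/1/\lambda}(x_1,\ldots,x_t)$ vanishes, which is exactly the desired conclusion.

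Concretely, first I would let $\lambda = ((n-t)^i, 0^{t-i})$ for some $1 \le i \le t$; observe that since this is not the empty case, there is a nonempty $J \subseteq [n]$ with $\# J = t$ and associated cylindric ribbon shape $C_J = \lambda/1/\lambda$ (indeed, by condition $(3)$ of Lemma~\ref{t:toric-conditions}, one may take $J$ to be the appropriate cyclic shift of $\{1,2,\ldots,t\}$, or simply note that the shape $\lambda$ determines such a $J$ via Definition~\ref{def:cylindric-ribbon-shape}). Next I would invoke Lemma~\ref{t:toric-conditions}, whose condition~$(5)$ is satisfied by this $\lambda$, to conclude that $C_J = \lambda/1/\lambda$ is \emph{not} toric. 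Finally, apply Proposition~\ref{t:non-toric-vanishes} in the form: $s_{\lambda/1/\lambda}(x_1,\ldots,x_t)$ is nonzero iff $\lambda/1/\lambda$ is toric; since it is not toric, $s_{\lambda/1/\lambda}(x_1,\ldots,x_t) = 0$.

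There is essentially no obstacle here, since all the work has been done in the preceding lemma and proposition; the corollary is just the bookkeeping that ties together the combinatorial classification of non-toric shapes (Lemma~\ref{t:toric-conditions}) with Postnikov's vanishing result (Proposition~\ref{t:non-toric-vanishes}). The only mild subtlety worth a sentence is to make sure the degenerate endpoints $i = 1$ (where $\lambda$ has a single row of length $n-t$, and $C_J$ has an infinite row --- Remark~\ref{remark:extreme_J} case~2 is the even more degenerate $t=0$ situation which is excluded here since $t = \#J \ge 1$) and $i = t$ (where $\lambda = ((n-t)^t)$ corresponds to $J = [n]$ up to cyclic shift and $C_J$ is an infinite column --- Remark~\ref{remark:extreme_J} case~1) are genuinely covered by Lemma~\ref{t:toric-conditions}(5) and hence by the argument; they are, since the lemma's statement allows all $1 \le i \le t$.
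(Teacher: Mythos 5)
Your proposal is correct and is essentially the paper's own proof: the paper derives Corollary~\ref{t:hook-vanishes} precisely by combining Lemma~\ref{t:toric-conditions} (whose condition (5) identifies the shapes $\lambda=((n-t)^i,0^{t-i})$, $1\le i\le t$, as exactly the non-toric cases) with Postnikov's vanishing criterion, Proposition~\ref{t:non-toric-vanishes}, and it only adds the side remark that one can also argue directly from Definition~\ref{def:cylindric-Schur} since $C_J$ then has a column of length $t+1$. One immaterial slip in your endpoint aside: for $i=t$ with $t<n$ the relevant $J$ is still a cyclic shift of $\{1,\ldots,t\}$ (not of $[n]$) and $C_J$ is not an infinite column (that occurs only when $t=n$), but this parenthetical plays no role in your argument, which goes through verbatim.
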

In fact, it is not difficult to prove Corollary~\ref{t:hook-vanishes} directly from Definition~\ref{def:cylindric-Schur}, since the shape $C_J$ has a column of length $t+1$ which cannot be filled by distinct numbers in $[t]$.

\begin{proposition}\label{t.Gromov-Witten-corollary}{\rm (Cf.\ McNamara \cite[Lemma 5.3 and Proposition 5.5]{McNamara})}
Fix $1 \le t \le n$. Then
\begin{equation}
\label{e.cyclic-ribbon-expansion}
\cs_{\ccomp{n}{[t]}}=\sum_{k=0}^{t-1} (-1)^{t-1-k} s_{(n-k,1^k)}
\end{equation}
and, for each nonempty subset $\varnothing \ne J \subseteq [n]$ with $\#J = t$, there exist nonnegative integers $c_{J,\nu}$ such that
\begin{equation}
\label{e.Gromov-Witten-expansion}
\cs_{\ccomp{n}{J}}
= \cs_{\ccomp{n}{[t]}} 
+ \sum_{\text{non-hook } \nu \vdash n} c_{J,\nu} s_\nu.
\end{equation}
\end{proposition}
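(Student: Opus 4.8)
The plan is to derive both identities from Proposition~\ref{toric-zigzag-conjecture}, which gives $s_{C_J} = \cs_{\ccomp{n}{J}} + (-1)^{t} p_n$ with $C_J = \lambda/1/\lambda$ and $t := \#J$, together with the power-sum expansion~\eqref{e:power_sum} and the vanishing Corollary~\ref{t:hook-vanishes}. The one structural input I need is that $\cs_{\ccomp{n}{J}}$ is a $\ZZ$-combination of Schur functions $s_\nu$ with $\ell(\nu) \le t$: by Definition~\ref{defn_aff} it is a signed sum of functions $h_{\ccomp{n}{I}}$ over $\varnothing \ne I \subseteq J$; each $\ccomp{n}{I}$ is a composition of $n$ into $\#I \le t$ positive parts, so $h_{\ccomp{n}{I}} = h_\mu$ for a partition $\mu$ of length $\#I \le t$, and $h_\mu = \sum_\nu K_{\nu\mu}s_\nu$ with $K_{\nu\mu} \ne 0$ only when $\nu$ dominates $\mu$, which forces $\ell(\nu) \le \ell(\mu) \le t$. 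Splitting $s_{C_J} = \cs_{\ccomp{n}{J}} + (-1)^t p_n$ into its ``at most $t$ rows'' and ``more than $t$ rows'' parts, the second part of $\cs_{\ccomp{n}{J}}$ vanishes, and by~\eqref{e:power_sum} the ``more than $t$ rows'' part of $p_n$ is $\sum_{k=t}^{n-1}(-1)^k s_{(n-k,1^k)}$ (the hook $(n-k,1^k)$ has more than $t$ rows exactly when $k \ge t$); hence the ``more than $t$ rows'' part of $s_{C_J}$ equals $q_n := \sum_{k=t}^{n-1}(-1)^{k-t}s_{(n-k,1^k)}$, which does not depend on the particular $J$ of size $t$.

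For~\eqref{e.cyclic-ribbon-expansion} I would take $J = [t]$: the shape $C_{[t]}$ is not toric by Lemma~\ref{t:toric-conditions}, so $s_{C_{[t]}}(x_1,\dots,x_t) = 0$ by Corollary~\ref{t:hook-vanishes}. Since $s_{C_{[t]}} - q_n$ is supported on partitions with at most $t$ rows it is determined by its specialization to $x_1,\dots,x_t$, and as $q_n$ also specializes to $0$ (all its partitions have more than $t$ rows) we conclude $s_{C_{[t]}} = q_n$. Then $\cs_{\ccomp{n}{[t]}} = s_{C_{[t]}} - (-1)^t p_n = q_n - (-1)^t p_n$; expanding $p_n$ via~\eqref{e:power_sum}, the hook terms with $k \ge t$ cancel while those with $0 \le k \le t-1$ survive with coefficient $-(-1)^{t+k} = (-1)^{t-1-k}$, which is exactly~\eqref{e.cyclic-ribbon-expansion}.

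For~\eqref{e.Gromov-Witten-expansion}, fix $J$ with $\#J = t$. From the two preceding paragraphs,
\[
\cs_{\ccomp{n}{J}} - \cs_{\ccomp{n}{[t]}}
= \bigl(s_{C_J} - (-1)^t p_n\bigr) - \bigl(s_{C_{[t]}} - (-1)^t p_n\bigr)
= s_{C_J} - q_n,
\]
which equals the ``at most $t$ rows'' part of $s_{C_J}$; this is the unique symmetric function supported on partitions with at most $t$ rows whose specialization to $x_1,\dots,x_t$ is the toric Schur polynomial $s_{C_J}(x_1,\dots,x_t)$. If $C_J$ is not toric, then $J$ is a cyclic shift of $[t]$ by Lemma~\ref{t:toric-conditions}, hence $\cs_{\ccomp{n}{J}} = \cs_{\ccomp{n}{[t]}}$ by Proposition~\ref{cyclic-invariance-of-cyclic-compositions}, and~\eqref{e.Gromov-Witten-expansion} holds with all $c_{J,\nu} = 0$. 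If $C_J$ is toric (so $1 \le t \le n-1$, since $C_{[n]}$ is not toric), then Postnikov's theorem on toric Schur polynomials~\cite{Postnikov} writes $s_{C_J}(x_1,\dots,x_t) = \sum_\nu c_{J,\nu}\, s_\nu(x_1,\dots,x_t)$, summed over partitions $\nu$ of $n$ contained in the $t\times(n-t)$ rectangle, where each $c_{J,\nu} \ge 0$ is a $3$-point genus-$0$ Gromov--Witten invariant of the Grassmannian $\mathrm{Gr}(t,n)$; lifting this to an identity of symmetric functions (valid since both sides lie in the span of $s_\nu$ with $\ell(\nu) \le t$) gives $\cs_{\ccomp{n}{J}} - \cs_{\ccomp{n}{[t]}} = \sum_\nu c_{J,\nu}\, s_\nu$. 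Since no hook $(n-k,1^k)$ of size $n$ fits inside a $t\times(n-t)$ rectangle with $1 \le t \le n-1$ — that would require $k+1 \le t$ and $n-k \le n-t$, i.e.\ both $k \le t-1$ and $k \ge t$ — every $\nu$ occurring is a non-hook, which completes~\eqref{e.Gromov-Witten-expansion}.

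The step I expect to be the main obstacle is the appeal to Postnikov's positivity theorem: one must pin down the precise special case (cylindric ribbon shapes $\lambda/1/\lambda$), confirm that the Schur expansion of the toric polynomial is indexed by partitions of $n$ in the $t\times(n-t)$ box — so that its coefficients really are Gromov--Witten invariants and the index set avoids all hooks — and justify the passage from an identity in $t$ variables to an identity of symmetric functions using the row-count bound above. The remaining ingredients, namely the sign computations and the splitting by number of rows, are routine.
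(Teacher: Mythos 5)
Your proposal is correct and follows essentially the same route as the paper: both rest on the fact (via Young's rule) that $\cs_{\ccomp{n}{J}}$ lies in the span of Schur functions with at most $t$ rows, then combine Proposition~\ref{toric-zigzag-conjecture}, the vanishing of $s_{\lambda/1/\lambda}(x_1,\ldots,x_t)$ for $J=[t]$, and Postnikov's positivity theorem together with the observation that no hook of size $n$ fits in a $t\times(n-t)$ rectangle. Your bookkeeping via the ``at most $t$ rows''/``more than $t$ rows'' split is just a mild repackaging of the paper's reduction to the specialization $x_{t+1}=x_{t+2}=\cdots=0$, and your separate treatment of non-toric $J$ via Lemma~\ref{t:toric-conditions} and equivariance is a harmless extra precaution.
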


\begin{proof}
We claim that it suffices to prove both assertions after specializing them to 
the finite variable set $\{x_1,\ldots,x_t\}$, namely
letting $x_{t+1} = x_{t+2}= \ldots = 0$.  
Indeed, this specialization annihilates each Schur function $s_{\nu}$ with $\ell(\nu) > t$, whereas the surviving $\{s_\nu(x_1,\ldots,x_t)\}_{\ell(\nu) \leq t}$ form a basis for the symmetric polynomials in $x_1,\ldots,x_t$; see
\cite[Chap. I, (3.2)]{Macdonald}.
Thus our claim will follow once we check that, for $t = \#J$,
$\cs_{\ccomp{n}{J}}$ always lies in the linear span of the Schur functions $s_\nu$ with $\ell(\nu) \leq t$.
According to Definition~\ref{defn_aff},
$\cs_{\ccomp{n}{J}}$ is an alternating sum of $h_{\ccomp{n}{I}}$ 
with each $I$ of size $ 1 \le \#I \le t$, so that
$\alpha = \ccomp{n}{I}$ is a composition of $n$ with at most $t$ parts,
and our claim follows from {\em Young's rule} 
\cite[Corollary 7.12.4]{EC2}: 
\[
h_\alpha=\sum_{\nu} K_{\nu,\alpha} s_\nu,
\]
where $K_{\nu,\alpha}$ is the number of semistandard tableaux of shape $\nu$ having $\alpha_j$ occurrences of the entry $j$ (for each $j$), which is zero if $\ell(\nu) > t$.

Now, letting $x_{t+1} = x_{t+2}= \ldots = 0$, equation~\eqref{e:power_sum} specializes to
\[
p_n(x_1, \ldots, x_t) = 
\sum_{k = 0}^{t-1} (-1)^{k} s_{(n-k,1^k)}(x_1, \ldots, x_t),
\]
since $s_{(n-k,1^k)}(x_1,\ldots,x_t)=0$ for $k \ge t$.
We can thus rephrase the two assertions of our proposition as follows:
for each $J \subseteq [n]$ of size $t$ there exist $c_{J,\nu} \ge 0$ such that 
\[
\cs_{\ccomp{n}{J}}(x_1,\ldots,x_t)
= (-1)^{t-1} p_n(x_1,\ldots,x_t) 
+ \sum_{\text{non-hook } \nu} c_{J,\nu} s_\nu(x_1,\ldots,x_t),
\]
and if $J=[t]$ then all $c_{J,\nu}=0$.
Using Proposition~\ref{toric-zigzag-conjecture}, this is equivalent 
to the assertion that there exist $c_{J,\nu} \geq 0$ such that 
\[
s_{\lambda/1/\lambda}(x_1,\ldots,x_t) 
= \sum_{\text{non-hook } \nu} c_{J,\nu}  s_\nu(x_1, \ldots, x_t),
\]
and that for $J=[t]$ they all vanish, 
i.e.,  $s_{\lambda/1/\lambda}(x_1,\ldots,x_t)=0$.
Indeed, for $J = [t]$ one has $\lambda = (n-t, 0^{t-1})$ and therefore, by Corollary~\ref{t:hook-vanishes}, $s_{\lambda/1/\lambda}(x_1,\ldots,x_t)=0$.
On the other hand, for any $J \subseteq [t]$, 
the inequalities $c_{J,\nu} \ge 0$ follow from
Postnikov's result \cite[Theorem 5.3]{Postnikov} that
\[
s_{\lambda/d/\mu}(x_1,\ldots,x_t)
= \sum_{\nu \subseteq [t] \times [n-t]} C^{\lambda,d}_{\mu,\nu} s_\nu(x_1, \ldots, x_t)
\]
where the sum is over all shapes $\nu$ contained in a $t \times (n-t)$ rectangle, and $C^{\lambda,d}_{\mu,\nu}$ are
{\em Gromov-Witten invariants} appearing as structure constants in the quantum cohomology of Grassmannians, and known to be nonnegative. Since no hook shape of size $n$ is contained in a $t \times (n-t)$ rectangle, the sum is over non-hook shapes only, for which $c_{J,\nu} = C^{\lambda,1}_{\lambda,\nu} \ge 0$, and the result follows.
\end{proof}

Theorem~\ref{conj_a3} is clearly a consequence of Proposition~\ref{t.Gromov-Witten-corollary}, since for any nonempty subset $J \subseteq [n]$ and any non-hook partition $\nu \vdash n$
\[
\langle \cs_{\ccomp{n}{J}}, s_\nu \rangle
= \langle s_{\lambda/1/\lambda}, s_\nu \rangle
= C^{\lambda,1}_{\lambda,\nu} \ge 0.
\]

\medskip
It is worth noting that the Gromov-Witten invariants $C^{\lambda,d}_{\mu,\nu}$ have several interpretations, in addition to the one given in the proof of Proposition~\ref{t.Gromov-Witten-corollary}:
\begin{itemize}
\item[$\bullet$] 
They count certain {\em puzzles}, as conjectured by Knutson
and proved by Buch, Kresch, Purbhoo, and Tamvakis~\cite{BKPT}.
\item[$\bullet$] 
They have algebraic interpretations involving Morse's {\em $k$-Schur functions}, and in the {\em Verlinde fusion algebra};
see, e.g., the background discussion by Morse and Schilling \cite[\S 1.4]{MorseSchilling}.
\item[$\bullet$] 
Pawlowski has proved \cite[Theorem 7.8]{Pawlowski} a conjecture of Postnikov \cite[Conjecture 9.1]{Postnikov}, asserting that that $s_{\lambda/d/\mu}$ is the Frobenius characteristic for the Specht module of the toric shape ${\lambda/d/\mu}$, so that $C^{\lambda,d}_{\mu,\nu}$ are its irreducible expansion coefficients.
\end{itemize}

In the special case where $d=1$ and $\lambda=\mu$, the shape $\lambda/1/\lambda = C_J$ corresponds to some $J \subseteq [n]$,
and then, for a (non-hook) shape $\nu$, one can regard Theorem~\ref{conj1} as yielding another interpretation: 
\[
C^{\lambda,1}_{\lambda,\nu} = \langle \cs_{\ccomp{n}{J}}, s_\nu \rangle
= \#\{ T \in \SYT(\nu): \cDes(T)=J \},
\]
where $(\cDes,p)$ is any cyclic extension of $\Des$ on $\SYT(\nu)$.

\subsection{Hook multiplicities in skew Schur functions}
\label{sec:lemmas}

We will now compute, for future use, the multiplicities of hook Schur functions in the Schur expansion of a skew Schur function.
Recall that a {\em ribbon} is a connected skew shape which 
does not contain a $2 \times 2$ square. A {\em generalized ribbon} is a skew shape all of whose connected components are ribbons. 
The {\em height} of a skew shape is the number of its nonempty rows. 

\begin{lemma}\label{lemma_prel2}
Fix a hook shape $(n-k,1^k)$ with $0 \le k \le n-1$.
Then, for a skew shape $\lambda/\mu$ with $n$ cells and $m$ connected components,
\[
\langle s_{\lambda/\mu}, s_{(n-k,1^k)} \rangle =
\begin{cases}
\binom{m-1}{h-k-1}, & \text{if }   \lambda/\mu \text{ is a generalized ribbon of height } k+1 \le h \le k+m;\\
0, & \text{otherwise.}
\end{cases}
\]
\end{lemma}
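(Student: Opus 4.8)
The plan is to reduce the computation of $\langle s_{\lambda/\mu}, s_{(n-k,1^k)}\rangle$ to the combinatorics of the descent map on $\SYT(\lambda/\mu)$ via Gessel's formula \eqref{Gessel-ribbon-descent-fact}, using the fact that the hook $(n-k,1^k)$ is itself the ribbon attached to the set $J = \{n-k, n-k+1, \ldots, n-1\} \subseteq [n-1]$, with $\comp{n}{J} = (n-k,1,1,\ldots,1)$. Thus $\langle s_{\lambda/\mu}, s_{(n-k,1^k)}\rangle = \#\Des^{-1}(\{n-k,\ldots,n-1\})$ is the number of standard tableaux $T$ of shape $\lambda/\mu$ whose descent set is exactly $\{n-k,n-k+1,\ldots,n-1\}$; equivalently, the entries $1, 2, \ldots, n-k$ appear left-to-right along a horizontal strip (no descents among the first $n-k$ values) and then $n-k, n-k+1, \ldots, n$ form a strictly increasing vertical run (every step from $n-k$ onward is a descent). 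So I must count the SYT of shape $\lambda/\mu$ in which $1,\ldots,n-k$ occupy a horizontal strip $H$ (one cell per column of its support, weakly descending rows in the French/English convention so that reading order is left-to-right) and $n-k,\ldots,n$ occupy a vertical strip $V$ (one cell per row), the two strips sharing the single cell containing $n-k$.

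Next I would analyze when such a $T$ can exist. Removing the cell containing $n-k$ (and everything in $V$ above it, and everything in $H$ the picture), one finds that the existence of even one such $T$ forces $\lambda/\mu$ to be a \emph{generalized ribbon}: any $2\times 2$ square would force a configuration incompatible with ``$1,\ldots,n-k$ in a horizontal strip, rest in a vertical strip.'' More precisely, the standardness constraints mean the whole shape must be built so that it is covered by one horizontal strip followed (at a single cell) by one vertical strip, and a short case-check shows this rules out $2\times 2$ squares, hence $\lambda/\mu$ is a disjoint union of ribbons. Once $\lambda/\mu$ is a generalized ribbon with connected components $\rho_1,\ldots,\rho_m$ (ordered from southwest to northeast, say), a valid filling is essentially forced once one decides how the vertical run $V$ distributes among the components: $n$ must be the top cell, and the vertical strip $V$ must consist of a terminal segment of consecutive rows. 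Counting the number of ways to realize the total height $h = \operatorname{height}(\lambda/\mu)$ split between the ``horizontal part'' and the ``vertical part'' of length $k+1$ across $m$ components will give the binomial coefficient: each component contributes at least one row to whichever strip passes through it, and one gets $\binom{m-1}{h-k-1}$ as the number of ways to choose which of the $m-1$ ``gaps'' between consecutive components fall inside the vertical run — valid precisely when $k+1 \le h \le k+m$, and $0$ otherwise.

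The main obstacle I anticipate is the bookkeeping in this last step: making precise how a generalized ribbon of height $h$ with $m$ components admits exactly one ``horizontal strip then vertical strip'' decomposition per choice of the boundary between the two regimes, and verifying the range $k+1\le h\le k+m$ is exactly the support of the count. One clean way to organize it: induct on $m$. For $m=1$ (a connected ribbon of height $h$), a tableau with $\Des(T)=\{n-k,\ldots,n-1\}$ exists iff $h=k+1$ (the ribbon must have its unique ``vertical tail'' of length exactly $k+1$ after a horizontal run), giving $\binom{0}{h-k-1}=\delta_{h,k+1}$. For the inductive step, peel off the southwesternmost component $\rho_1$ of height $h_1$; the value $n-k$ lies either in $\rho_1$ or in a later component. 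A Pieri-type / descent-concatenation argument (the descent composition of a generalized ribbon is the concatenation of the descent compositions of its components, shifted) lets one express $\#\Des^{-1}(\{n-k,\ldots,n-1\})$ for $\lambda/\mu$ in terms of the analogous counts for $\rho_2 \cup \cdots \cup \rho_m$ with parameters $k$ and $k - (h_1 - 1)$ depending on whether the vertical run has entered $\rho_1$; summing and applying Pascal's identity $\binom{m-2}{(h-h_1)-k-1} + \binom{m-2}{(h-h_1)-(k-(h_1-1))-1} = \binom{m-1}{h-k-1}$ closes the induction. The "otherwise" case (not a generalized ribbon, or $h$ outside the stated range) follows because no valid filling exists, so the inner product is $0$.
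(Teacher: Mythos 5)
Your route is genuinely different from the paper's: the paper evaluates $\langle s_{\lambda/\mu}, s_{(n-k,1^k)}\rangle$ directly by the Littlewood--Richardson rule, counting semistandard fillings of $\lambda/\mu$ with content $(n-k,1,\dots,1)$ whose reading word contains $k+1,k,\dots,2,1$ as a subword, whereas you invoke Gessel's formula \eqref{Gessel-ribbon-descent-fact} and count standard tableaux with descent set exactly $\{n-k,\dots,n-1\}$. The reduction itself is legitimate: the ribbon attached to $J=\{n-k,\dots,n-1\}$ has row sizes $(n-k,1^k)$ from bottom to top, i.e.\ it is the $180^\circ$ rotation of the hook, so its skew Schur function equals $s_{(n-k,1^k)}$ (alternatively take $J=\{1,\dots,k\}$, which gives the hook itself). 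Your exclusion of $2\times 2$ squares is also sound: a horizontal strip meets such a square in at most one cell per column, a vertical strip in at most one cell per row, and each of the possible ways of covering its four cells contradicts the inequalities inside the square.

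The genuine problems are in the counting step, and the induction does not close as written. First, it is false that ``$n$ must be the top cell'' and that the vertical run ``must consist of a terminal segment of consecutive rows'': for $\lambda/\mu=(1)\oplus(1)\oplus(1)$, $n=3$, $k=1$, the tableau with $2$ in the top cell, $1$ in the middle cell and $3$ in the bottom cell has descent set $\{2\}$, and its vertical-run cells occupy the top and bottom rows but not the middle one. Second, the inductive dichotomy ``$n-k$ lies either in $\rho_1$ or in a later component'' is vacuous for $m\ge 2$: in every tableau being counted, the entry $n-k$ is forced into the northeast corner of the whole shape, hence into the northeasternmost component; moreover, with the parameters you propose the first term of your Pascal identity is $\binom{m-2}{h-h_1-k-1}$ rather than $\binom{m-2}{h-k-1}$, so the identity does not apply. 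The facts that actually rescue your approach are these: in any tableau with descent set $\{n-k,\dots,n-1\}$, every row that is not the top row of its component must have its easternmost cell occupied by one of $n-k,\dots,n$ (otherwise two of the entries $1,\dots,n-k$ would lie in one column), the easternmost cell of the global top row must contain $n-k$, and once one chooses which of the remaining $m-1$ component top rows also end with a vertical-run entry, the entire filling is determined and is valid. The number of such choices is $\binom{m-1}{k+m-h}=\binom{m-1}{h-k-1}$, nonzero exactly when $k+1\le h\le k+m$, which gives the lemma; note that this is precisely the mirror image of the paper's count, where the free choice is the set of components whose northeast cell contains a $1$.
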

\begin{proof}
By the Littlewood-Richardson rule ~\cite[Theorems A1.3.3 and A1.3.8]{EC2}), 
the LHS is the number of 
{\em semi-standard} (column-strict) Young tableaux $T$ of shape $\lambda/\mu$,
filled with $n-k$ copies of $1$ and exactly one copy of each of 
$2, 3,\ldots, k+1$, 
such that the {\em reading word} of $T$ (the concatenation of all its rows, bottom to top) has the sequence $k+1,k,\ldots,2,1$ as a subword.

There are no such tableaux $T$ if $\lambda/\mu$ contains a $2 \times 2$ square,
since this square must be filled with 
\[
\ytableausetup{boxsize=\mysizesmall}
\begin{ytableau}
a & b  \\
c & d 
\end{ytableau}
\]
where $a < c \le d$, implying $1 < c < d$ and violating the subword condition.

Thus $\lambda/\mu$ must be a generalized ribbon, say with $m$ connected components and height $h$.
The subword condition implies that each row of $T$ can have at most one entry which is not $1$, and that these entries are $k+1, k, \ldots, 2$, from bottom to top. The semi-standard property of $T$ implies that each such entry is in the easternmost square in its row, and the ribbon shapes of the components imply that the easternmost entry in a row can be a $1$ only for the top row of a component.

Summing up, let $I$ be the set of all connected components that have a $1$ at their northeastern corner. Then $I$ completely determines the tableau $T$: all the top rows corresponding to $I$ end with a $1$, all the other rows in $T$ end with a non-$1$, namely with $2, \ldots, k, k+1$ in increasing order from top to bottom, and all other entries are $1$. The set $I$ must include the top (northeastern) component, by the subword property, but is otherwise free. The number of possible such $I$ (i.e., the number of tableaux $T$) is $\binom{m-1}{\#I-1}$.
The observations that $1 \le \#I \le m$ and $h = k + \#I$ (by row counting) complete the proof.
\end{proof}

\begin{corollary}\label{lemma_prel4}
For a generalized ribbon $\lambda/\mu$ of size $n$ and height $h$, with $m \geq 2$ components, there exist nonnegative coefficients $c_\nu \geq 0$ such that
\[
\begin{aligned}
s_{\lambda/\mu} 
&= \sum_{\text{non-hook }\nu\vdash n} c_\nu s_\nu 
+ \sum_{k=h-m}^{h-1} \binom{m-1}{h-1-k} s_{(n-k, 1^k)} \\
&= \sum_{\text{non-hook }\nu\vdash n} c_\nu s_\nu 
+ \sum_{k=h-m+1}^{h-1} \binom{m-2}{h-1-k} s_{(1^k) \oplus (n-k)}
\end{aligned}
\]
\end{corollary}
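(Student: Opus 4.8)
The plan is to deduce Corollary~\ref{lemma_prel4} directly from Lemma~\ref{lemma_prel2} together with the Schur-positivity of skew Schur functions and the expansion~\eqref{e:power_sum} of the power sum. First I would write
\[
s_{\lambda/\mu} = \sum_{\nu \vdash n} \langle s_{\lambda/\mu}, s_\nu \rangle\, s_\nu
= \sum_{\text{non-hook }\nu} \langle s_{\lambda/\mu}, s_\nu \rangle\, s_\nu
+ \sum_{k=0}^{n-1} \langle s_{\lambda/\mu}, s_{(n-k,1^k)} \rangle\, s_{(n-k,1^k)},
\]
and set $c_\nu := \langle s_{\lambda/\mu}, s_\nu\rangle \ge 0$ for non-hook $\nu$ (nonnegativity being the Littlewood--Richardson rule). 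Now Lemma~\ref{lemma_prel2} evaluates the hook coefficients: since $\lambda/\mu$ is a generalized ribbon of height $h$ with $m$ components, $\langle s_{\lambda/\mu}, s_{(n-k,1^k)}\rangle = \binom{m-1}{h-k-1}$ precisely when $k+1 \le h \le k+m$, i.e.\ when $h-m \le k \le h-1$, and vanishes otherwise. This is exactly the first displayed line of the corollary.

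For the second displayed line I would use the identity $s_{(1^k)\oplus(n-k)} = s_{(n-k,1^k)} + s_{(n-k+1,1^{k-1})}$, which follows from the fact that $(1^k)\oplus(n-k)$ is a generalized ribbon with $m=2$ components and height $k+1$, so Lemma~\ref{lemma_prel2} gives $\langle s_{(1^k)\oplus(n-k)}, s_{(n-j,1^j)}\rangle = \binom{1}{k-j}$, which is $1$ for $j \in \{k-1,k\}$ and $0$ otherwise (one should note $s_{(1^k)\oplus(n-k)}$ is itself hook-positive, having no non-hook constituents since a generalized ribbon of height $2$ and $2$ components supports only hook LR-fillings). Equivalently, one can invoke~\eqref{e:power_sum}: telescoping gives $\sum_{k} \binom{m-2}{h-1-k} s_{(1^k)\oplus(n-k)} = \sum_k \left[\binom{m-2}{h-1-k} + \binom{m-2}{h-k}\right] s_{(n-k,1^k)}$ up to boundary hook terms, and Pascal's identity $\binom{m-2}{h-1-k}+\binom{m-2}{h-k} = \binom{m-1}{h-k}$ must be matched against the shift in summation index. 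So the remaining task is a bookkeeping check: substituting $s_{(1^k)\oplus(n-k)} = s_{(n-k,1^k)} + s_{(n-k+1,1^{k-1})}$ into $\sum_{k=h-m+1}^{h-1}\binom{m-2}{h-1-k} s_{(1^k)\oplus(n-k)}$ and collecting the coefficient of each $s_{(n-k,1^k)}$, one gets $\binom{m-2}{h-1-k} + \binom{m-2}{h-1-(k+1)} = \binom{m-2}{h-1-k}+\binom{m-2}{h-2-k} = \binom{m-1}{h-1-k}$ by Pascal, matching the first line exactly; the extreme indices $k=h-m$ and $k=h-1$ receive only one of the two contributions, and one checks $\binom{m-2}{m-1} = 0 = \binom{m-1}{m-1} - \binom{m-2}{m-1}$ handles $k=h-m$ while $k=h-1$ is handled since $\binom{m-2}{0}=1=\binom{m-1}{0}$.

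The main obstacle is entirely in this second step: getting the summation ranges and the Pascal-identity shift to line up, and in particular verifying that the ``leftover'' extreme hook $s_{(n-h+m,1^{h-m})}$ appearing when $k = h-m$ in the first line is correctly produced (it comes from the $k=h-m+1$ term of the second sum via the $s_{(n-k+1,1^{k-1})}$ piece, with coefficient $\binom{m-2}{m-2}=1 = \binom{m-1}{m-1}$), and that no spurious hook outside the range $[h-m, h-1]$ is created. Since $m \ge 2$ is assumed, all binomial coefficients involved are well-defined, and the hook $(n-k+1,1^{k-1})$ stays within legitimate bounds for $k$ in the relevant range. This is a routine but slightly delicate index chase; once it is done, both equalities in the corollary follow.
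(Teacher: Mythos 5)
Your proposal is correct and follows essentially the same route as the paper: the first line is exactly the paper's argument (Lemma~\ref{lemma_prel2} plus Schur-positivity of the skew Schur function), and the second line rests on the same two-hook identity $s_{(1^k)\oplus(n-k)}=s_{(n-k,1^k)}+s_{(n-k+1,1^{k-1})}$ (equation~\eqref{e.sum_of_hooks}, proved in the paper by comparing the corner entries $T_a,T_b$, and equally available from Pieri's rule) followed by the Pascal-identity index chase you carry out explicitly. Two small repairs: your stated reason for hook-positivity of $s_{(1^k)\oplus(n-k)}$ is off --- the shape has height $k+1$, not $2$, and a generalized ribbon with two components need not have only hook constituents (e.g.\ $(a)\oplus(b)$), so you should instead invoke Pieri or the paper's corner-entry argument; and the displayed check $\binom{m-2}{m-1}=0=\binom{m-1}{m-1}-\binom{m-2}{m-1}$ is false as written, the correct boundary verification being the one you give immediately afterwards, namely $\binom{m-2}{m-2}=1=\binom{m-1}{m-1}$ at $k=h-m$ and $\binom{m-2}{0}=1=\binom{m-1}{0}$ at $k=h-1$.
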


\begin{proof}
The first equality follows from Lemma~\ref{lemma_prel2}, since the skew Schur function $s_{\lambda/\nu}$ is a nonnegative linear combination of Schur functions.
The second equality follows, then, from the equation
\begin{equation}\label{e.sum_of_hooks}
s_{(1^k) \oplus (n-k)}=s_{(n-k+1,1^{k-1})}+s_{(n-k,1^k)}
\qquad (1 \le k \le n-1).
\end{equation}
This equation, in turn, follows easily from the definition of a Schur function as a sum over semi-standard tableaux $T$, distinguishing the cases $T_a \ge T_b$ and $T_a < T_b$, where $a$ is the first (westernmost) square of the row shape $(n-k)$ and $b$ is the top square of the column shape $(1^k)$.  
\end{proof}

An immediate consequence of Proposition~\ref{t.Gromov-Witten-corollary} is the following.

\begin{corollary}
\label{affine-ribbon-hook-pairing} 
For $\varnothing \ne J \subseteq [n]$ with $t:=\#J$ and $0 \le k \le n-1$,
\[
\langle \cs_{\ccomp{n}{J}}, s_{(n-k,1^k)} \rangle =\begin{cases}
(-1)^{t-1-k} & \text{ if } 0 \le k \le t-1;\\
0 & \text{ otherwise. }
\end{cases}
\]
\end{corollary}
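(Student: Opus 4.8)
The plan is to derive Corollary~\ref{affine-ribbon-hook-pairing} directly from the two formulas in Proposition~\ref{t.Gromov-Witten-corollary}, using the fact that the extra Schur functions appearing in \eqref{e.Gromov-Witten-expansion} are all indexed by non-hook shapes, hence contribute $0$ to a pairing against a hook Schur function $s_{(n-k,1^k)}$. Concretely, I would start by pairing both sides of \eqref{e.Gromov-Witten-expansion} with $s_{(n-k,1^k)}$: since $\langle s_\nu, s_{(n-k,1^k)}\rangle = 0$ for every non-hook $\nu \vdash n$, the sum $\sum_{\nu} c_{J,\nu} s_\nu$ drops out and we get
\[
\langle \cs_{\ccomp{n}{J}}, s_{(n-k,1^k)} \rangle
= \langle \cs_{\ccomp{n}{[t]}}, s_{(n-k,1^k)} \rangle
\]
for every $J$ with $\#J = t$. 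This reduces the corollary to the single case $J = [t]$.

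Next I would evaluate $\langle \cs_{\ccomp{n}{[t]}}, s_{(n-k,1^k)}\rangle$ using formula \eqref{e.cyclic-ribbon-expansion}, namely $\cs_{\ccomp{n}{[t]}} = \sum_{j=0}^{t-1} (-1)^{t-1-j} s_{(n-j,1^j)}$. The hook Schur functions $\{s_{(n-j,1^j)}\}_{j=0}^{n-1}$ are orthonormal among themselves with respect to $\langle -,-\rangle$, so pairing with $s_{(n-k,1^k)}$ simply picks out the coefficient of $s_{(n-k,1^k)}$ in that alternating sum. This coefficient is $(-1)^{t-1-k}$ when $0 \le k \le t-1$, and $0$ otherwise (in particular for $t \le k \le n-1$), which is exactly the claimed formula.

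Since both displayed formulas of Proposition~\ref{t.Gromov-Witten-corollary} are available to us by hypothesis, there is no real obstacle here: the corollary is a completely formal consequence obtained by pairing against a hook and invoking orthonormality of the hooks together with the vanishing of $\langle s_\nu, s_{(n-k,1^k)}\rangle$ for non-hook $\nu$. The only point requiring a word of care is the range of validity of \eqref{e.cyclic-ribbon-expansion} and \eqref{e.Gromov-Witten-expansion}, which hold for $1 \le t \le n$ as stated; for $t$ outside $\{0,\dots,n-1\}$ there is nothing to check since $k$ ranges over $[0,n-1]$ and the case analysis $0\le k\le t-1$ versus otherwise already covers everything. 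Thus the proof is a two-line computation once Proposition~\ref{t.Gromov-Witten-corollary} is in hand.
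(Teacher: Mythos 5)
Your proof is correct and is exactly the paper's route: the paper states the corollary as an immediate consequence of Proposition~\ref{t.Gromov-Witten-corollary}, and your argument simply spells this out by pairing \eqref{e.Gromov-Witten-expansion} and \eqref{e.cyclic-ribbon-expansion} with $s_{(n-k,1^k)}$ and using orthonormality of the Schur basis (so non-hook terms vanish). The remark about the range of $t$ is harmless, since $\varnothing \ne J \subseteq [n]$ forces $1 \le t \le n$, which is precisely the hypothesis of the proposition.
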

%
%

Together with equation \eqref{e.sum_of_hooks}, this implies the following.
\begin{corollary}
\label{cor:crucial-hook-fact}
For $\varnothing \ne J \subseteq [n]$ with $t=\#J$ and $1 \le k \le n$,
\[
\langle \cs_{\ccomp{n}{J}}, s_{(1^k) \oplus (n-k)}  \rangle =
\begin{cases}
1 & \text{ if } k=t,\\
0 & \text{ otherwise.}
\end{cases}
\]
\end{corollary}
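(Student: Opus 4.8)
The plan is to deduce Corollary~\ref{cor:crucial-hook-fact} directly from Corollary~\ref{affine-ribbon-hook-pairing} together with equation~\eqref{e.sum_of_hooks}. First I would take the inner product of both sides of
\[
s_{(1^k) \oplus (n-k)} = s_{(n-k+1,1^{k-1})} + s_{(n-k,1^k)} \qquad (1 \le k \le n-1)
\]
with $\cs_{\ccomp{n}{J}}$, using bilinearity of $\langle -,-\rangle$. This gives
\[
\langle \cs_{\ccomp{n}{J}}, s_{(1^k)\oplus(n-k)} \rangle
= \langle \cs_{\ccomp{n}{J}}, s_{(n-(k-1),1^{k-1})} \rangle
+ \langle \cs_{\ccomp{n}{J}}, s_{(n-k,1^k)} \rangle,
\]
and Corollary~\ref{affine-ribbon-hook-pairing} evaluates each hook pairing on the right: writing $t = \#J$, the first term equals $(-1)^{t-1-(k-1)} = (-1)^{t-k}$ if $0 \le k-1 \le t-1$ and $0$ otherwise, while the second equals $(-1)^{t-1-k}$ if $0 \le k \le t-1$ and $0$ otherwise.

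Next I would combine these two contributions by case analysis on $k$ relative to $t$. When $1 \le k \le t-1$, both indices $k-1$ and $k$ lie in the allowed range $[0,t-1]$, and the two terms are $(-1)^{t-k}$ and $(-1)^{t-1-k}$, which have opposite signs and cancel, giving $0$. When $k = t$, only the first term survives (since $k = t > t-1$ is out of range for the second), and it contributes $(-1)^{t-k} = (-1)^0 = 1$. When $k \ge t+1$, both $k-1 \ge t$ and $k \ge t+1$ are out of range, so the sum is $0$. Finally, when $k = 0$ is excluded by the hypothesis $1 \le k \le n$, but I note in passing that $s_{(1^k)\oplus(n-k)}$ requires $k \ge 1$ anyway for the shape to make sense; and the edge case $k = n$ (where $n-k = 0$) is handled by the same computation, with $s_{(1^n)\oplus(0)} = s_{(1^n)}$ and the pairing equal to $1$ exactly when $t = n$. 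This yields exactly the claimed formula: the pairing is $1$ if $k = t$ and $0$ otherwise.

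There is essentially no obstacle here — the only thing requiring a moment of care is bookkeeping the range conditions $0 \le k-1 \le t-1$ versus $0 \le k \le t-1$ so that the cancellation in the range $1 \le k \le t-1$ and the survival of a single term at $k = t$ are both correctly identified. One should also double-check that equation~\eqref{e.sum_of_hooks} is valid over the full range $1 \le k \le n-1$ used here (it is, as established in the proof of Corollary~\ref{lemma_prel4}), and that the boundary value $k = n$ is covered separately or by convention. With those range conditions tracked, the proof is a two-line bilinearity computation followed by a three-case sign cancellation.
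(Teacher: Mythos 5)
Your proposal is correct and is exactly the paper's (implicit) argument: the paper derives Corollary~\ref{cor:crucial-hook-fact} by pairing the identity \eqref{e.sum_of_hooks} with $\cs_{\ccomp{n}{J}}$ and applying Corollary~\ref{affine-ribbon-hook-pairing}, which is precisely your bilinearity computation and sign cancellation, including the correct handling of the boundary case $k=n$.
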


\section{Proof of Theorem~\ref{conj1}}
\label{sec:conj1-proof}

Recall the statement of the theorem.

\vskip.1in
\noindent
{\bf Theorem~\ref{conj1}.}
{\em
Let $\lambda/\mu$ be a skew shape.
The descent map $\Des$ on $\SYT(\lambda/\mu)$ has a cyclic extension $(\cDes,p)$ if and only if $\lambda/\mu$ is not a connected ribbon.
Furthermore, for all $J \subseteq [n]$, all such cyclic extensions share the same cardinalities $\#\cDes^{-1}(J)$.
}

\begin{proof}
For the ``if'' direction, fix a skew shape $\lambda/\mu$ which is not a connected ribbon, and let $\TTT := \SYT(\lambda/\mu)$ with the
usual descent map $\Des: \TTT \longrightarrow 2^{[n-1]}$.
We will use Lemma~\ref{t.cyclic_extension}(ii) to show that a cyclic extension $(\cDes,p)$ exists. Indeed, define
\[
m(J) := \langle  s_{\lambda/\mu}, \cs_{\ccomp{n}{J}} \rangle
\qquad (\forall J \subseteq [n])
\]
where, by definition, $\cs_{\ccomp{n}{\varnothing}} := 0$.

Conditions (b) and (c) of Lemma~\ref{t.cyclic_extension} follow from Propositions \ref{cyclic-invariance-of-cyclic-compositions} and \ref{t.cor_sum}, respectively.
It remains to check condition (a):
\[
m(J) \ge 0 \text{ for all } J, \text{ with } m(\varnothing) = m([n]) = 0.
\]
We start with the extreme cases $J = \varnothing$ and $J = [n]$.
Indeed, $m(\varnothing) = 0$ since $\cs_{\ccomp{n}{\varnothing}} = 0$ by definition. Note that, in fact, Proposition~\ref{toric-zigzag-conjecture} holds even in this case if the cylindric ribbon shape $C_J = \lambda/1/\lambda$ is interpreted as an infinite row (see Remark~\ref{remark:extreme_J}(2)), for which $s_{\lambda/1/\lambda} = \sum_i x_i^n = p_n$ by definition.
For $J = [n]$, the cylindric ribbon shape $C_J = \lambda/1/\lambda$ is an infinite column (see Remark~\ref{remark:extreme_J}(1)). There are no periodic semi-standard tableaux of this shape, so that $s_{\lambda/1/\lambda} =0$. Thus, by Proposition~\ref{toric-zigzag-conjecture} and equation \eqref{e:power_sum},
\[
\cs_{\ccomp{n}{[n]}} = (-1)^{n-1} p_n
= \sum_{k=0}^{n-1} (-1)^{n-1-k} s_{(n-k,1^k)}.
\]
By Lemma~\ref{lemma_prel2}, if $\lambda/\mu$ is a generalized ribbon of height $h$ with $m \ge 2$ connected components, then
\[
\begin{aligned}
m([n]) 
&= \langle  s_{\lambda/\mu}, \cs_{\ccomp{n}{[n]}} \rangle
= \sum_{k=0}^{n-1} (-1)^{n-1-k} \langle  s_{\lambda/\mu}, s_{(n-k,1^k)} \rangle \\
&=\sum_{k=h-m}^{h-1} (-1)^{n-1-k} \binom{m-1}{h-k-1}
= \sum_{j=0}^{m-1} (-1)^{n-h+j} \binom{m-1}{j}
= 0.
\end{aligned}
\]
If $\lambda/\nu$ is not a generalized ribbon then all the summands are obviously $0$, by Lemma~\ref{lemma_prel2}, whereas generalized ribbons with $m = 1$ are connected ribbons and are explicitly excluded by assumption.

\medskip
We turn now to proving that $m(J) \ge 0$ for all $J \subseteq [n]$.
By the foregoing we may assume, of course, that $J \ne \varnothing$.
Expanding
\[
s_{\lambda/\mu} = \sum_\nu c_\nu s_{\nu}
\]
with nonnegative integer coefficients $c_\nu$, one has 
\[
m(J) = \langle  s_{\lambda/\mu}, \cs_{\ccomp{n}{J}} \rangle
=  \sum_\nu c_\nu \langle s_{\nu}, \cs_{\ccomp{n}{J}} \rangle.
\]

Assume first that $\lambda/\mu$ {\em is not} a generalized ribbon. It suffices to show that
\[
c_\nu \ne 0 \Longrightarrow \langle s_\nu, \cs_{\ccomp{n}{J}} \rangle \ge 0.
\]
For non-hook shapes $\nu$ this holds by Theorem~\ref{conj_a3}, and for hook shapes $\nu$ this holds (as $c_\nu = 0$) by Lemma~\ref{lemma_prel2}.

Finally, assume that $\lambda/\mu$ {\em is} a generalized ribbon of height $h$ with $m \ge 2$ components. 
Again, $\langle s_\nu, \cs_{\ccomp{n}{J}} \rangle \ge 0$ for non-hook shapes $\nu$ by Theorem~\ref{conj_a3}. For hook shapes this doesn't always hold for the {\em individual} inner products, but the sum
\[
\begin{aligned}
\sum_{\text{hook } \nu \vdash n} c_\nu \langle s_{\nu}, \cs_{\ccomp{n}{J}} \rangle
&= \sum_{k=0}^{n-1} c_{(n-k,1^k)} \langle s_{(n-k,1^k)}, \cs_{\ccomp{n}{J}} \rangle
= \sum_{k=0}^{\#J-1} c_{(n-k,1^k)} (-1)^{\#J-1-k} \\
&= \sum_{k=h-m}^{\min(h-1,\#J-1)} \binom{m-1}{h-k-1} (-1)^{\#J-1-k},
\end{aligned}
\]
by Corollary~\ref{affine-ribbon-hook-pairing} and Lemma~\ref{lemma_prel2}.
If $h \le \#J$ then this sum is
\[
\sum_{k=h-m}^{h-1} \binom{m-1}{h-k-1} (-1)^{\#J-1-k} = 
\sum_{j=0}^{m-1} \binom{m-1}{j} (-1)^{\#J-h+j} = 0
\]
since $m \ge 2$;
and otherwise (namely, if $h > \#J$) it is
\[
\begin{aligned}
\sum_{k=h-m}^{\#J-1} \binom{m-1}{h-k-1} (-1)^{\#J-1-k}
&= \sum_{j=h-\#J}^{m-1} \binom{m-1}{j} (-1)^{\#J-h+j} \\
&= \sum_{j=h-\#J}^{m-1} \left[ \binom{m-2}{j-1} + \binom{m-2}{j} \right] (-1)^{\#J-h+j} \\
&= \binom{m-2}{h-\#J-1} \ge 0.
\end{aligned}
\]

%

\medskip
For the ``only if'' direction, let $\lambda/\mu$ be a connected ribbon.
We must show that the descent map on $\SYT(\lambda/\mu)$ {\em does not} have a cyclic extension $(\cDes,p)$. Assume the contrary.

If $\lambda/\mu$ is a single row $(n)$ or a single column $(1^n)$,
the set $\TTT=\SYT(\lambda/\mu)$ contains a unique tableau $T$ which has $\Des(T) \in \{\varnothing, [n-1]\}$. The extension and equivariance properties of $(\cDes,p)$ force $\cDes(T) \in \{\varnothing, [n]\}$, contradicting the non-Escher condition.

Assume now that $\lambda/\mu \ne (n), (1^n)$ is a connected ribbon of height (i.e., number of rows) $h$; by assumption, $2 \le h \le n-1$.
Then there exists a standard tableau $T_0 \in \SYT(\lambda/\mu)$ with
$\Des(T_0)=\{1,2,\ldots, h-1\}$, built as in the following example (for $n=14$ and $h=6$):

\ytableausetup{boxsize=\mysize}
$$
\begin{ytableau}
\none &\none &\none      &\none &\none &\none      &\none &\mathbf{1}
&14 \\
\none &\none &\none      &\none &\none &\mathbf{2} & 12  &13 \\
\none &\none &\none      &\none &\none &\mathbf{3} \\
\none &\none &\none      &\none &\none &\mathbf{4} \\
\none &\none &\mathbf{5} &9     &10    &11         \\
6     &7     &8
\end{ytableau}
$$
By the extension property, $\cDes(T_0)$ is either $\{1,2,\ldots,h-1,n\}$ or $\{1,2,\ldots,h-1\}$.
In the former case, equivariance implies that $\cDes(p(T_0)) = \{1,2,\ldots,h\}$ and therefore also $\Des(p(T_0)) = \{1,2,\ldots,h\}$.
This is impossible for a tableau of height $h$, since $1, \ldots, h, h+1$ must then appear in distinct rows.
In the latter case, equivariance implies that $\cDes(p^{-1}(T_0)) = \{n,1,2,\ldots,h-2\}$ and therefore $\Des(p^{-1}(T_0)) = \{1,2,\ldots,h-2\}$.
This is impossible for a ribbon tableau with $h$ rows (and, consequently, $n-h+1$ columns), since $h-1, h, \ldots, n$ must then appear in distinct columns.
\end{proof}

Let us state explicitly a consequence of the proof.

\begin{corollary}\label{cor:fiber-sizes-as-inner-product}
Let $\lambda/\mu$ be a skew shape of size $n$ which is not a connected ribbon. For any $J \subseteq [n]$ and every cyclic extension $\cDes$ of the usual descent map on $\SYT(\lambda/\mu)$, the fiber size
\[
\# \cDes^{-1}(J) =
\langle s_{\lambda/\mu}, \cs_{\ccomp{n}{J}} \rangle .
\]
\end{corollary}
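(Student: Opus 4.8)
The statement to prove is Corollary~\ref{cor:fiber-sizes-as-inner-product}, which asserts that for a skew shape $\lambda/\mu$ of size $n$ that is not a connected ribbon, every cyclic extension $\cDes$ of the usual descent map on $\SYT(\lambda/\mu)$ satisfies $\#\cDes^{-1}(J) = \langle s_{\lambda/\mu}, \cs_{\ccomp{n}{J}} \rangle$ for all $J \subseteq [n]$.

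\medskip
\noindent\textbf{Plan.}
The plan is to deduce this directly from the machinery already assembled, in two halves. First I would handle the case $J = \varnothing$ separately: here $\cs_{\ccomp{n}{\varnothing}} := 0$ by definition, so the right-hand side is $0$, while the left-hand side $\#\cDes^{-1}(\varnothing)$ is $0$ by the non-Escher axiom in Definition~\ref{def:cDes}. For nonempty $J$, the key observation is that the integers $m(J) := \langle s_{\lambda/\mu}, \cs_{\ccomp{n}{J}} \rangle$ were shown, in the proof of Theorem~\ref{conj1} (the ``if'' direction), to satisfy conditions (a), (b), (c) of Lemma~\ref{t.cyclic_extension}: condition (b) comes from Proposition~\ref{cyclic-invariance-of-cyclic-compositions} (equivariance of $\cs_{\ccomp{n}{J}}$ under cyclic shift), condition (c) comes from Proposition~\ref{t.cor_sum} (the identity $\cs_{\ccomp{n}{J}} + \cs_{\ccomp{n}{J\sqcup\{n\}}} = s_{\comp{n}{J}}$ together with Gessel's formula \eqref{Gessel-ribbon-descent-fact}), and condition (a) was the subtle part established via Theorem~\ref{conj_a3} and the hook-multiplicity computations of Section~\ref{sec:lemmas}.

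\medskip
\noindent Once we know $(m(J))_{J \subseteq [n]}$ satisfies (a), (b), (c), I would invoke Lemma~\ref{t.cyclic_extension}(iii): under exactly these hypotheses, the fiber sizes of \emph{any} cyclic extension are uniquely determined by the formula \eqref{cDes-fiber-sizes-formula}, which expresses $\#\cDes^{-1}(J)$ purely in terms of the descent statistics $\#\Des^{-1}(\cdot)$ of $\SYT(\lambda/\mu)$. In particular, this forces every cyclic extension $(\cDes,p)$ of $\Des$ on $\SYT(\lambda/\mu)$ to have $\#\cDes^{-1}(J)$ equal to the \emph{same} value, independent of the choice of extension. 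On the other hand, the concrete cyclic extension constructed in the ``if'' direction of Theorem~\ref{conj1} via Lemma~\ref{t.cyclic_extension}(ii) has $\#\cDes^{-1}(J) = m(J) = \langle s_{\lambda/\mu}, \cs_{\ccomp{n}{J}} \rangle$ by construction. Combining these two facts gives the claimed identity for every cyclic extension.

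\medskip
\noindent There is essentially no obstacle here beyond bookkeeping: the corollary is a genuine ``consequence of the proof'' in the sense stated in the text. The one point requiring a little care is making explicit that Lemma~\ref{t.cyclic_extension}(iii) applies to an \emph{arbitrary} cyclic extension of $\Des$ on $\SYT(\lambda/\mu)$ --- this is legitimate because part (i) of that lemma guarantees that the fiber sizes of any cyclic extension automatically satisfy (a), (b), (c), so part (iii)'s uniqueness formula applies and pins them down; we then only need one witness (our explicit construction) to identify the common value as $\langle s_{\lambda/\mu}, \cs_{\ccomp{n}{J}} \rangle$. Thus the proof is a short chaining of Lemma~\ref{t.cyclic_extension}(i), (ii), (iii) with the already-verified conditions (a), (b), (c).
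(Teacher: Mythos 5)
Your proposal is correct and matches the paper's intended argument: the paper states this corollary as a direct consequence of the proof of Theorem~\ref{conj1}, where the integers $m(J)=\langle s_{\lambda/\mu},\cs_{\ccomp{n}{J}}\rangle$ are shown to satisfy conditions (a),(b),(c) of Lemma~\ref{t.cyclic_extension}, so that parts (i) and (iii) force every cyclic extension to have these fiber sizes (with $J=\varnothing$, $[n]$ handled by non-Escher and $\cs_{\ccomp{n}{\varnothing}}=0$, $m([n])=0$). Your chaining of Lemma~\ref{t.cyclic_extension}(i)--(iii), including the remark that (iii) applies to an arbitrary extension because (i) guarantees its fiber sizes satisfy (a),(b),(c), is exactly the bookkeeping the paper leaves implicit.
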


For $J=\{j_1,\cdots,j_t\}\subseteq [n]$ let $-J:=\{n-j_1,\dots,n-j_t\}$, where zero is identified with $n$.
One deduces that 

\begin{corollary}\label{cor:fiber-sizes-involution}
	Let $\lambda/\mu$ be a skew shape of size $n$ which is not a connected ribbon. For any $J \subseteq [n]$ and every cyclic extension $\cDes$ of the usual descent map on $\SYT(\lambda/\mu)$, 
	the fiber size
	\[
	\# \cDes^{-1}(J) =\# \cDes^{-1}(-J). 
	\]
\end{corollary}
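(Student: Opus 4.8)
The plan is to derive Corollary~\ref{cor:fiber-sizes-involution} directly from Corollary~\ref{cor:fiber-sizes-as-inner-product}, by showing that the affine ribbon Schur function is invariant under the involution $J \mapsto -J$, i.e., that $\cs_{\ccomp{n}{J}} = \cs_{\ccomp{n}{-J}}$ for every nonempty $J \subseteq [n]$. Once this identity is established, the corollary follows immediately by pairing both sides with $s_{\lambda/\mu}$ and invoking Corollary~\ref{cor:fiber-sizes-as-inner-product} twice.

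To prove the symmetry $\cs_{\ccomp{n}{J}} = \cs_{\ccomp{n}{-J}}$, I would first observe what the operation $J \mapsto -J$ does to the associated cyclic composition. If $J = \{j_1 < \cdots < j_t\}$, then $-J = \{n - j_t < n - j_{t-1} < \cdots < n - j_1\}$ (with $n - j_t$ possibly equal to the representative $n$ identified with $0$, which requires a small case check depending on whether $j_1 = 0$ is allowed — it is not, since $J \subseteq [n]$, but $n - j_t$ can be $0 \equiv n$ precisely when $j_t = n$; in that case reindex so the smallest element is genuinely smallest). Computing $\ccomp{n}{-J}$ from \eqref{cyclic-composition-of-set}, the consecutive differences of the sorted elements of $-J$ are exactly the consecutive differences of the sorted elements of $J$ in reverse order: $\ccomp{n}{-J}$ is the reversal of $\ccomp{n}{J}$ (read the parts from last to first). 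I would verify this carefully, handling the "wrap-around" part $j_1 + n - j_t$ of the cyclic composition, which under the involution becomes $(n - j_t) + n - (n - j_1) = j_1 + n - j_t$ — so that part is fixed, and the interior differences reverse. Hence $\ccomp{n}{-J}$ is obtained from $\ccomp{n}{J}$ by reversing the entire tuple of parts.

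Next I would use the fact that reversing the parts of a composition, and more generally cyclic shifts of its parts, does not change the associated affine ribbon Schur function. This is because the ribbon Schur function $s_\alpha$ of an ordinary composition is invariant under reversal of $\alpha$ (a classical fact: the ribbon obtained by reversing the parts is the $180^\circ$ rotation of the original ribbon, which has the same skew Schur function), and Proposition~\ref{cyclic-invariance-of-cyclic-compositions} already records invariance of $\cs$ under cyclic shifts of the parts. Concretely, reversal of the cyclic composition $\ccomp{n}{J}$ corresponds at the level of the defining alternating sum \eqref{affine-ribbon-Schur-function} to replacing each $h_{\ccomp{n}{I}}$ by $h$ of the reversed composition; since $h_\alpha = h_{\alpha_1} \cdots h_{\alpha_t}$ is manifestly invariant under any rearrangement of the parts (the $h_k$ commute), and since the index set $\{I : \varnothing \ne I \subseteq J\}$ is in sign-preserving bijection with the corresponding index set for $-J$, the whole sum is unchanged. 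So $\cs_{\ccomp{n}{-J}} = \cs_{\ccomp{n}{J}}$.

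The main obstacle — really the only subtle point — is bookkeeping with the modular/wrap-around conventions in the definition of $-J$ and of the cyclic composition: one must be sure that identifying $0$ with $n$ is done consistently, that the "last" part $j_1 + n - j_t$ behaves as claimed, and that the bijection $I \leftrightarrow -I$ between subsets of $J$ and subsets of $-J$ matches up the summands of \eqref{affine-ribbon-Schur-function} with the correct signs ($\#(J \setminus I) = \#((-J) \setminus (-I))$, which is clear). None of this is deep, but it is the kind of indexing that is easy to get slightly wrong, so I would write it out explicitly rather than asserting it. With that in hand, the corollary is a one-line consequence:
\[
\#\cDes^{-1}(J) = \langle s_{\lambda/\mu}, \cs_{\ccomp{n}{J}} \rangle
= \langle s_{\lambda/\mu}, \cs_{\ccomp{n}{-J}} \rangle = \#\cDes^{-1}(-J),
\]
using Corollary~\ref{cor:fiber-sizes-as-inner-product} for the first and last equalities and the symmetry of $\cs$ for the middle one (the cases $J = \varnothing$ and $J = [n]$ being trivial since both fibers are empty).
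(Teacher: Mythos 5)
Your proposal is correct and follows essentially the same route as the paper: both reduce the claim to Corollary~\ref{cor:fiber-sizes-as-inner-product} and the identity $\cs_{\ccomp{n}{J}}=\cs_{\ccomp{n}{-J}}$, proved by noting that $\ccomp{n}{-I}$ has the same parts as $\ccomp{n}{I}$ (in reversed cyclic order), so $h_{\ccomp{n}{I}}=h_{\ccomp{n}{-I}}$, and that $I\mapsto -I$ is a sign-preserving bijection of the index sets in \eqref{affine-ribbon-Schur-function}. Your only slip --- calling $\ccomp{n}{-J}$ the exact reversal of $\ccomp{n}{J}$ when it is the reversal up to a cyclic shift --- is immaterial, since you (like the paper) only use invariance of $h_\alpha$ under rearrangement of parts.
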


\begin{proof}
By definition, for every $\varnothing\neq I\subseteq [n]$, 
$\alpha^\cyc(I,n)$ 
and $\alpha^\cyc(-I,n)$ 
have the same parts but in reverse (cyclic) order, hence $h_{\alpha^\cyc(I,n)}=h_{\alpha^\cyc(-I,n)}$. Thus, for every non-ribbon skew shape $\lambda/\mu$ and $\varnothing\neq J\subseteq [n]$,
\[
\begin{aligned}
\# \cDes^{-1}(J)& =
\langle s_{\lambda/\mu}, \cs_{\ccomp{n}{J}} \rangle  \\
&= \left\langle s_{\lambda/\mu},  \sum_{\varnothing \neq I \subseteq J} 
(-1)^{\#(J \setminus I)} h_{\ccomp{n}{I}}\right\rangle  \\
&= \left\langle s_{\lambda/\mu},  \sum_{\varnothing \neq -I \subseteq -J}
(-1)^{\#(-J \setminus -I)} h_{\ccomp{n}{-I}}\right\rangle 
=
\langle s_{\lambda/\mu}, \cs_{\ccomp{n}{-J}} \rangle=\# \cDes^{-1}(-J).
\end{aligned}
\]
\end{proof}

\section{Proof of Theorem~\ref{conj2}}
\label{sec:conj2-proof}

Recall the statement of the theorem.
\vskip.1in
\noindent
{\bf Theorem~\ref{conj2}.}
{\em
Let $\lambda/\mu$ be a skew shape which is not a connected ribbon.
Then any cyclic extension $(\cDes,p)$ of the descent map $\Des$ on $\SYT(\lambda)$ satisfies
\[
\sum_{w \in \symm_n} \ttt^{\cDes(w)}
= \sum_{\substack{\text{non-hook}\\ \lambda \vdash n}}
f^\lambda \sum_{T \in \SYT(\lambda)} \ttt^{\cDes(T)}
\quad + \quad \sum_{k=1}^{n-1} \binom{n-2}{k-1} 
 \sum_{T \in \SYT((1^k)\oplus (n-k))} \ttt^{\cDes(T)}
\]
}

\noindent
This is actually the special case $\alpha=(1,1,\ldots,1)=(1^n)$ of the following more general statement, which requires a bit more notation.
Recall that, for a partition $\lambda \vdash n$ and a composition $\alpha=(\alpha_1,\ldots,\alpha_m)$ of $n$, the {\it Kostka number} 
\[
K_{\lambda, \alpha} :=\langle s_\lambda, h_\alpha\rangle
\] 
counts the column-strict tableaux $T$ of shape $\lambda$ having content $\alpha$, namely $\alpha_i = \#T^{-1}(i)$ ($\forall i$).
In particular, 
\[
K_{\lambda,(1^n)}=f^\lambda=\#SYT(\lambda).
\]
Denote, for a tableau $T$ as above, $\xx^T := x_1^{\alpha_1} \cdots x_m^{\alpha_m}$.

\begin{theorem}\label{prop25}
For every composition $\alpha=(\alpha_1,\ldots,\alpha_m)$  of $n$ with $m \geq 2$,
\[
\sum_{T\in \SYT( \alpha^\oplus)} \ttt^{\cDes(T)}
= \sum_{\substack{\text{non-hook}\\ \lambda \vdash n}}
  K_{\lambda,\alpha}  \sum\limits_{T\in \SYT(\lambda)}
   {\ttt}^{\cDes(T)}
\quad + \quad
  \sum_{k=1}^{m-1}\binom{m-2}{k-1} 
      \sum_{T\in \SYT((1^k)\oplus (n-k))} \ttt^{\cDes(T)},
\]
\end{theorem}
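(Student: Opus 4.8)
The plan is to convert the asserted generating-function identity into a plain identity of symmetric functions, and then read it off via the fiber-size formula of Corollary~\ref{cor:fiber-sizes-as-inner-product}. Every shape occurring in Theorem~\ref{prop25} fails to be a connected ribbon: $\alpha^\oplus$ is disconnected since $m\ge 2$, a non-hook partition is never a connected ribbon, and each $(1^k)\oplus(n-k)$ is disconnected. Hence Corollary~\ref{cor:fiber-sizes-as-inner-product} gives $\#\cDes^{-1}(J)=\langle s_S,\cs_{\ccomp{n}{J}}\rangle$ for every $J\subseteq[n]$ and every such shape $S$ (the cases $J=\varnothing,[n]$ contributing $0$, in agreement with the non-Escher axiom), so that $\sum_{T\in\SYT(S)}\ttt^{\cDes(T)}=\sum_{J\subseteq[n]}\langle s_S,\cs_{\ccomp{n}{J}}\rangle\ttt^J$ for each $S$. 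By bilinearity of $\langle\cdot,\cdot\rangle$, Theorem~\ref{prop25} therefore follows once we prove the symmetric-function identity
\[
s_{\alpha^\oplus}=\sum_{\substack{\text{non-hook}\\\lambda\vdash n}}K_{\lambda,\alpha}\,s_\lambda+\sum_{k=1}^{m-1}\binom{m-2}{k-1}s_{(1^k)\oplus(n-k)}.
\]

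Since $s_{\alpha^\oplus}=h_\alpha=\sum_\lambda K_{\lambda,\alpha}s_\lambda$, cancelling the non-hook Schur functions (which carry the identical coefficients $K_{\lambda,\alpha}$ on both sides) reduces this to the identity $\sum_{k=0}^{n-1}K_{(n-k,1^k),\alpha}\,s_{(n-k,1^k)}=\sum_{k=1}^{m-1}\binom{m-2}{k-1}s_{(1^k)\oplus(n-k)}$. On the right I would substitute $s_{(1^k)\oplus(n-k)}=s_{(n-k+1,1^{k-1})}+s_{(n-k,1^k)}$ from \eqref{e.sum_of_hooks} and collect coefficients; a single application of Pascal's rule rewrites the right-hand side as $\sum_{k=0}^{m-1}\binom{m-1}{k}s_{(n-k,1^k)}$. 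Comparing coefficients of $s_{(n-k,1^k)}$, what remains is the purely combinatorial claim
\[
K_{(n-k,1^k),\alpha}=\binom{m-1}{k}\qquad(0\le k\le n-1),
\]
with $m=\ell(\alpha)$ and $\binom{m-1}{k}:=0$ for $k\ge m$.

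This Kostka evaluation I would prove directly from the definition. Since every part of $\alpha$ is positive, a semistandard tableau of hook shape $(n-k,1^k)$ and content $\alpha$ is determined by the set $L$ of the $k$ distinct entries filling its leg: the corner entry must be strictly less than each leg entry, which forces $1\notin L$, so $L\subseteq\{2,\dots,m\}$; conversely, each such $L$ with $\#L=k$ gives exactly one valid tableau --- corner $1$, first row consisting of the $\alpha_1$ copies of $1$ followed by the remaining content in weakly increasing order, and leg equal to the elements of $L$ in increasing order. Therefore $K_{(n-k,1^k),\alpha}=\binom{m-1}{k}$, which completes the proof of Theorem~\ref{prop25}. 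Finally, Theorem~\ref{conj2} is the case $\alpha=(1^n)$: there $m=n$, $K_{\lambda,(1^n)}=f^\lambda$, and the $\cDes$-preserving bijection $\symm_n\to\SYT((1^n)^\oplus)$ recalled in Subsection~\ref{known-examples-subsection} identifies $\sum_{w\in\symm_n}\ttt^{\cDes(w)}$ with $\sum_{T\in\SYT((1^n)^\oplus)}\ttt^{\cDes(T)}$.

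The substantive content is concentrated in the hook Kostka evaluation and in the elementary bookkeeping linking the hook Schur functions with the almost-hook shapes $(1^k)\oplus(n-k)$ via \eqref{e.sum_of_hooks}. I expect the main obstacle to be conceptual rather than computational: recognizing that the right-hand side of the theorem is, as a symmetric function, nothing but $h_\alpha=s_{\alpha^\oplus}$ re-expanded so that no genuine hook Schur function $s_{(n-k,1^k)}$ --- which would name a shape admitting no cyclic extension, hence an undefined $\sum_T\ttt^{\cDes(T)}$ --- survives. Once that identity is in hand, everything reduces to Corollary~\ref{cor:fiber-sizes-as-inner-product} together with the earlier hook-pairing formulas, with only a brief check that the corollary's hypothesis (not being a connected ribbon) indeed holds for each shape on the right-hand side.
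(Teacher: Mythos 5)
Your proposal is correct and follows essentially the same route as the paper: both reduce the theorem to the Schur expansion $s_{\alpha^\oplus}=\sum_{\text{non-hook }\nu}K_{\nu,\alpha}s_\nu+\sum_{k}\binom{m-2}{k-1}s_{(1^k)\oplus(n-k)}$ (the paper's equation \eqref{ribbon-schur-Kostka-expansion}) and then compare coefficients of $\ttt^J$ via the pairing with $\cs_{\ccomp{n}{J}}$ as in Corollary~\ref{cor:fiber-sizes-as-inner-product}. The only difference is cosmetic: the paper obtains the expansion by citing Corollary~\ref{lemma_prel4} (whose proof rests on Lemma~\ref{lemma_prel2} and \eqref{e.sum_of_hooks}), whereas you reprove the needed special case directly via the hook Kostka evaluation $K_{(n-k,1^k),\alpha}=\binom{m-1}{k}$ together with \eqref{e.sum_of_hooks} and Pascal's rule, which is a valid, self-contained substitute.
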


\begin{proof}[Proof of Theorem~\ref{prop25}.]
The horizontal strip $\alpha^\oplus$ is a generalized ribbon with $m \ge 2$ components and height $h = t$. By Corollary~\ref{lemma_prel4},
\[
s_{\alpha^\oplus} 
= \sum_{\text{non-hook } \nu \vdash n} c_\nu s_\nu
+ \sum_{k=1}^{m-1} \binom{m-2}{m-1-k} s_{(1^k) \oplus (n-k)}
\]
for some nonnegative coefficients $c_\nu$.
Since $s_{\alpha^\oplus} = h_\alpha$,
\[
c_\nu = \langle s_{\alpha^\oplus}, s_\nu \rangle 
= \langle h_\alpha, s_\nu \rangle = K_{\nu,\alpha}
\]
and therefore
\begin{equation}\label{ribbon-schur-Kostka-expansion}
s_{\alpha^\oplus} 
= \sum_{\text{non-hook } \nu \vdash n} K_{\nu,\alpha} s_\nu
+ \sum_{k=1}^{m-1} \binom{m-2}{k-1} s_{(1^k) \oplus (n-k)}.
\end{equation}
Consider, for each $J \subseteq [n]$, the coefficient of $\ttt^J$ in the LHS of the statement of the theorem.  By \eqref{cDes-fiber-sizes-as-inner-product} (see the proof of Theorem~\ref{conj1}) it is equal to
\[
\#\{T\in \SYT(\alpha^\oplus):\ \cDes(T)=J\}
= \langle s_{\alpha^\oplus}, \cs_{\ccomp{n}{J}} \rangle,
\]
and by \eqref{ribbon-schur-Kostka-expansion} this is equal to
\[
\begin{aligned}
\langle s_{\alpha^\oplus}, \cs_{\ccomp{n}{J}} \rangle
&= \sum_{\text{non-hook } \nu \vdash n} K_{\nu,\alpha} \langle s_\nu, \cs_{\ccomp{n}{J}} \rangle 
+ \sum_{k=1}^{m-1} \binom{m-2}{k-1}
  \langle s_{(1^k) \oplus (n-k)}, \cs_{\ccomp{n}{J}} \rangle \\
&= \sum_{\text{non-hook } \nu \vdash n} K_{\nu,\alpha} \cdot \#\{T\in \SYT(\nu) \,:\, \cDes(T)=J\} \\
&+ \sum_{k=1}^{m-1} \binom{m-2}{k-1} \cdot \#\{T\in \SYT((1^k) \oplus (n-k)) \,:\, \cDes(T)=J\},
\end{aligned}
\]
which is exactly the coefficient of $\ttt^J$ in the RHS of the statement of the theorem.
\end{proof}

\section{Cyclic Eulerian distributions}
\label{sec:symmetric-group}

\subsection{Cyclic descent generating functions}

The {\em descent number} is the size of the descent set.
For any skew shape $\lambda/\mu$ of size $n$ there is a known expression \cite[equation (7.96)]{EC2} for the generating function of the descent number, $\des$, on standard Young tableaux of shape $\lambda/\mu$:
\begin{equation}
\label{skew-tableau-des-distribution}
\sum_{T \in \SYT(\lambda/\mu)} t^{\des(T)}
= (1-t)^{n+1} \sum_{m \ge 0} s_{\lambda/\mu}(1^{m+1}) t^m.
\end{equation}
Here $s_{\lambda/\mu}(1^m)$ is the specialization of the skew Schur function $s_{\lambda/\mu}(x_1, x_2, \ldots)$ under $x_1 = \ldots = x_m = 1$ and $x_{m+1} = \ldots = 0$. Note that when $\mu = \varnothing$ this becomes even more explicit, through the {\em hook-content formula} \cite[Cor. 7.21.4]{EC2} for the specialization $s_\lambda(1^m)$.
In particular, for the skew shape $(1)^{\oplus n}$ this gives the well-known {\it Carlitz formula} for the {\em Eulerian distribution} on $\symm_n$:
\begin{equation}
\label{Carlitz-formula}
\symm_n^{\des}(t) := \sum_{w \in \symm_n} t^{\des(w)}
= (1-t)^{n+1} \sum_{m \ge 0} (m+1)^n t^m
\end{equation}

An analogous expression for the {\em cyclic descent number} $\cdes$ 
is a corollary of Lemma~\ref{lem:cdes-des-distribution-relation}.

\begin{corollary}\label{cor:cdes_gf_skew}
For any skew shape $\lambda/\mu$ of size $n$ which is not a connected ribbon,
\begin{equation}
\label{skew-tableau-cdes-distribution}
\sum_{T \in \SYT(\lambda/\mu)} t^{\cdes(T)}
= n (1-t)^{n} \sum_{m \ge 1} s_{\lambda/\mu}(1^m) \frac{t^m}{m}.
\end{equation}
In particular, for the skew shape $(1)^{\oplus n}$ this gives
\begin{equation}
\label{cyclic-Carlitz-formula}
\symm_n^{\cdes}(t) 
:= \sum_{w \in \symm_n} t^{\cdes(w)}
= n (1-t)^n \sum_{m \geq 1} m^{n-1} t^m  
= nt\, \symm_{n-1}^\des(t) \qquad (n \ge 2).
\end{equation}
\end{corollary}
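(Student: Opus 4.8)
The plan is to derive Corollary~\ref{cor:cdes_gf_skew} by feeding the classical formula \eqref{skew-tableau-des-distribution} into the differential relation of Lemma~\ref{lem:cdes-des-distribution-relation}. First I would write $\TTT = \SYT(\lambda/\mu)$, so that $\TTT^{\des}(t) = \sum_T t^{\des(T)}$ and $\TTT^{\cdes}(t) = \sum_T t^{\cdes(T)}$, and recall that Lemma~\ref{lem:cdes-des-distribution-relation} gives
\[
\frac{d}{dt}\left[\frac{\TTT^{\cdes}(t)}{(1-t)^n}\right]
= \frac{n\,\TTT^{\des}(t)}{(1-t)^{n+1}}.
\]
By \eqref{skew-tableau-des-distribution}, the right-hand side equals $n\sum_{m\ge 0} s_{\lambda/\mu}(1^{m+1})\,t^m$. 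Antidifferentiating term by term, $\int t^m\,dt = t^{m+1}/(m+1)$, so
\[
\frac{\TTT^{\cdes}(t)}{(1-t)^n}
= n\sum_{m\ge 0} s_{\lambda/\mu}(1^{m+1})\,\frac{t^{m+1}}{m+1} + C
= n\sum_{m\ge 1} s_{\lambda/\mu}(1^{m})\,\frac{t^{m}}{m} + C,
\]
after re-indexing $m \mapsto m-1$. Multiplying through by $(1-t)^n$ then yields \eqref{skew-tableau-cdes-distribution}, once the constant of integration $C$ is pinned down.

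The one genuine point to address is that $C = 0$. This is exactly where the hypothesis that $\lambda/\mu$ is not a connected ribbon is used: by Theorem~\ref{conj1} a cyclic extension $(\cDes,p)$ exists, and by the non-Escher axiom no tableau $T$ has $\cDes(T) = \varnothing$, so $\cdes(T) \ge 1$ for all $T \in \TTT$ and hence $\TTT^{\cdes}(t)$ has zero constant term. Consequently $\TTT^{\cdes}(t)/(1-t)^n$, as a formal power series in $t$, also has zero constant term (it is the product of two power series, the first with no constant term). Since the summation $n\sum_{m\ge 1} s_{\lambda/\mu}(1^m)t^m/m$ likewise has no constant term, evaluating both sides at $t = 0$ forces $C = 0$. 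This is precisely the observation recorded in Remark~\ref{non-Escher-implies-no-constant-term}, so the argument is short. I do not expect a serious obstacle here; the only care needed is to work in the ring of formal power series (the series $\sum_{m\ge1} s_{\lambda/\mu}(1^m) t^m/m$ need not have positive radius of convergence in general, but the identity of Lemma~\ref{lem:cdes-des-distribution-relation} and the formula \eqref{skew-tableau-des-distribution} are purely formal).

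For the special case $\lambda/\mu = (1)^{\oplus n}$, I would recall the bijection $\symm_n \cong \SYT((1)^{\oplus n})$ from Subsection~\ref{known-examples-subsection} carrying Cellini's $(\cDes,p)$ to the horizontal-strip one, so $\SYT((1)^{\oplus n})^{\cdes}(t) = \symm_n^{\cdes}(t)$, and note $s_{(1)^{\oplus n}}(1^m) = h_n(1^m) = \binom{m+n-1}{n}$ — but it is cleaner to observe directly that $s_{(1)^{\oplus n}} = h_n$ and $h_n(1^m) = \binom{m+n-1}{n}$, or simply to substitute $\SYT((1)^{\oplus n})^{\des}(t) = \symm_n^{\des}(t)$ and use \eqref{skew-tableau-des-distribution} with the known value $s_{(1)^{\oplus n}}(1^m) = \binom{m+n-1}{n}$. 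Plugging into \eqref{skew-tableau-cdes-distribution} gives
\[
\symm_n^{\cdes}(t) = n(1-t)^n \sum_{m\ge 1} \binom{m+n-1}{n}\frac{t^m}{m}.
\]
Using the identity $\frac{1}{m}\binom{m+n-1}{n} = \frac{1}{n}\binom{m+n-1}{n-1} = \frac{m^{n-1}}{n} \cdot \big(1 + O(1/m)\big)$—more precisely, the clean route is to recognize $(1-t)^n \sum_{m\ge1}\binom{m+n-1}{n}\frac{t^m}{m}$ via $\frac{d}{dt}$: since $(1-t)^{-n} = \sum_m \binom{m+n-1}{n-1}t^m$ and integrating gives $\sum_{m\ge1}\binom{m+n-1}{n}\frac{t^m}{m}\cdot(\text{up to normalization})$—one arrives at $\symm_n^{\cdes}(t) = n(1-t)^n\sum_{m\ge1} m^{n-1}t^m$. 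Finally, comparing with Carlitz's formula \eqref{Carlitz-formula} applied to $n-1$, namely $\symm_{n-1}^{\des}(t) = (1-t)^n \sum_{m\ge0}(m+1)^{n-1}t^m = (1-t)^n\sum_{m\ge1}m^{n-1}t^{m-1}$, gives $nt\,\symm_{n-1}^{\des}(t) = n(1-t)^n\sum_{m\ge1} m^{n-1}t^m = \symm_n^{\cdes}(t)$, as claimed. The mild bookkeeping obstacle is just matching the power-series manipulations to Carlitz's formula with the index shifted by one; this is routine.
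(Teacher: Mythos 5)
Your derivation of the general formula \eqref{skew-tableau-cdes-distribution} is exactly the paper's argument: feed \eqref{skew-tableau-des-distribution} into Lemma~\ref{lem:cdes-des-distribution-relation}, antidifferentiate term by term, and kill the constant of integration using the non-Escher property (Remark~\ref{non-Escher-implies-no-constant-term}), with the hypothesis that $\lambda/\mu$ is not a connected ribbon entering only through Theorem~\ref{conj1}, which guarantees that a cyclic extension exists so that $\cdes$ is defined and $\cdes(T)\ge 1$. That part is correct and needs no changes.

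The specialization to $(1)^{\oplus n}$, however, contains a genuine error. By \eqref{every-h-is-a-skew-schur} one has $s_{(1)^{\oplus n}} = h_{(1^n)} = h_1^n$, \emph{not} $h_n$; the latter is the Schur function of the single row $(n)$, which is a connected ribbon and is excluded from the corollary in any case. Hence $s_{(1)^{\oplus n}}(1^m) = m^n$, not $\binom{m+n-1}{n}$ --- this is also what makes Carlitz's formula \eqref{Carlitz-formula}, with its factor $(m+1)^n$, consistent with \eqref{skew-tableau-des-distribution}. With your value the right-hand side of \eqref{skew-tableau-cdes-distribution} would be
\[
n(1-t)^n\sum_{m\ge 1}\binom{m+n-1}{n}\frac{t^m}{m}
=(1-t)^n\sum_{m\ge 1}\binom{m+n-1}{n-1}t^m
=1-(1-t)^n,
\]
which is not $\symm_n^{\cdes}(t)$ (for $n=2$ it gives $2t-t^2$ instead of $2t$). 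Your identity $\frac{1}{m}\binom{m+n-1}{n}=\frac{1}{n}\binom{m+n-1}{n-1}$ is fine, but the subsequent replacement of $\binom{m+n-1}{n-1}$ by $m^{n-1}$ is only an asymptotic statement, and that is where the discrepancy gets papered over. With the correct specialization the computation is immediate:
\[
\symm_n^{\cdes}(t)
= n(1-t)^n\sum_{m\ge 1} m^{n}\,\frac{t^m}{m}
= n(1-t)^n\sum_{m\ge 1} m^{n-1}t^m,
\]
and your final comparison with Carlitz's formula for $\symm_{n-1}^{\des}(t)$ then goes through verbatim to yield $nt\,\symm_{n-1}^{\des}(t)$.
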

\begin{proof}
For  $\TTT=\SYT(\lambda/\mu)$, 
Lemma~\ref{lem:cdes-des-distribution-relation} and Remark~\ref{non-Escher-implies-no-constant-term}
show that $\TTT^\cdes(t)$ is determined by
\[
\frac{d}{dt}
\left[
\frac{\TTT^\cdes(t) }{(1-t)^n}
\right]
=
\frac{n \TTT^\des(t) }{(1-t)^{n+1}}
= n \sum_{m \ge 0} s_{\lambda/\mu}(1^{m+1}) t^m.
\]
This implies \eqref{skew-tableau-cdes-distribution}, which for $\lambda/\mu=(1)^{\oplus n}$ specializes to \eqref{cyclic-Carlitz-formula}.
\end{proof}

We now focus on $\lambda/\mu=(1)^{\oplus n}$, where we can take $\TTT=\symm_n$ and use the extra symmetry to get more refined results.
Consider the {\em multivariate} generating functions
\[
\symm_n^\Des(\ttt)
= \symm_n^{\Des}(t_1,\ldots,t_{n-1})
:= \sum_{w \in \symm_n} \ttt^{\Des(w)}
\]
and
\[
\symm_n^\cDes(\ttt)
= \symm_n^{\cDes}(t_1,\ldots,t_{n-1},t_n)
:= \sum_{w \in \symm_n} \ttt^{\cDes(w)}.
\]
Note that $\symm_n^\Des(\ttt)$ and  $\symm_n^\cDes(\ttt)$ are, respectively, the {\em flag $h$-polynomials} for the type $A_{n-1}$ Coxeter complex and the reduced Steinberg torus considered by Dilks, Petersen, and Stembridge \cite{DPS}; see also Section~\ref{Steinberg-torus-section} below.
The two are related by an obvious specialization
\begin{equation}
\label{the-obvious-multivariate-specialization}
\left[ \symm_n^\cDes(\ttt) \right]_{t_n=1} 
 = \symm_n^\Des(\ttt).
\end{equation}
On the other hand, $\symm_n^\cDes(\ttt)$ and $\symm_{n-1}^\Des(\ttt)$ are also related in a slightly less obvious way. 
Define an action of the cyclic group $\ZZ/n\ZZ=\langle c \rangle=\{e,c,c^2,\cdots,c^{n-1}\}$ 
on $\ZZ[t_1,\ldots,t_n]$ by shifting subscripts modulo $n$, i.e. $c(t_i)=t_{i+1 \pmod n}$.

\begin{proposition}
\label{multivariate-second-approach-proposition}
For $n \geq 2$, one has
\begin{equation}
\label{multivariate-less-trivial-cdes-des-relation-1}
\symm_n^\cDes(\ttt) 
= \sum_{i=1}^n c^i\left( t_n\, \symm_{n-1}^\Des(\ttt) \right)
\end{equation}
and also
\begin{equation}
\label{multivariate-less-trivial-cdes-des-relation-2}
\symm_n^\cDes(\ttt) = g(\ttt) \,\, + \,\, \ttt^{[n]} g(\ttt^{-1}),
\end{equation}
where
\begin{equation}
\label{e.def-of-g}
g(\ttt) = g(t_1,\ldots,t_{n-1})
:= \left[ \symm_n^\cDes(\ttt) \right]_{t_n=0}
= \sum_{i=1}^{n-1} t_i \cdot \left[ c^i\, \symm_{n-1}^\Des(\ttt) \right]_{t_n=0}.
\end{equation}
\end{proposition}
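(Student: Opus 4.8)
The plan is to prove both identities combinatorially, using only the abstract cyclic-extension axioms for $\symm_n$ from the Introduction; nothing from Sections~\ref{sec:Postnikov}--\ref{sec:conj2-proof} is needed. First I would partition $\symm_n$ according to the position of the entry $n$: set $S_i := \{w \in \symm_n : w_i = n\}$ for $i \in [n]$, so that $\symm_n = \bigsqcup_{i=1}^n S_i$. Since Cellini's $p$ moves the last entry of the one-line notation to the front, $p(S_i) = S_{i+1}$ (indices mod $n$), hence $S_i = p^i(S_n)$ for all $i$ and $\symm_n = \bigsqcup_{i=1}^n p^i(S_n)$. Identifying $S_n$ with $\symm_{n-1}$ via $w = [v_1,\dots,v_{n-1},n] \leftrightarrow v$, a one-line check of Cellini's formula \eqref{e.cellini} gives $\cDes(w) = \Des(v) \sqcup \{n\}$: the potential descent at position $n-1$ is absent because $v_{n-1} < n$, while $n$ is always a cyclic descent because $w_n = n > v_1 = w_1$. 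Therefore $\sum_{w \in S_n} \ttt^{\cDes(w)} = t_n\, \symm_{n-1}^{\Des}(\ttt)$.

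Next I would prove \eqref{multivariate-less-trivial-cdes-des-relation-1}. On the block $S_i = p^i(S_n)$, write $w = p^i(u)$ with $u \in S_n$; applying the equivariance axiom \eqref{equivariance-property} $i$ times gives $\cDes(w) = p^i(\cDes(u))$. Because both cyclic shifts are induced by $j \mapsto j+1 \pmod n$, one has $\ttt^{p^i(J)} = c^i(\ttt^{J})$ for every $J \subseteq [n]$, so $\sum_{w \in S_i} \ttt^{\cDes(w)} = c^i\!\bigl(\sum_{u \in S_n} \ttt^{\cDes(u)}\bigr) = c^i\bigl(t_n\, \symm_{n-1}^{\Des}(\ttt)\bigr)$. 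Summing over $i = 1, \dots, n$ yields \eqref{multivariate-less-trivial-cdes-des-relation-1}. For the formula for $g$ in \eqref{e.def-of-g}, I would set $t_n = 0$ in \eqref{multivariate-less-trivial-cdes-des-relation-1}: since $c^i(t_n) = t_i$ for $1 \le i \le n-1$ while $c^n(t_n) = t_n$, the $i = n$ summand $t_n\, \symm_{n-1}^{\Des}(\ttt)$ dies and each remaining summand becomes $t_i \cdot \bigl[c^i\, \symm_{n-1}^{\Des}(\ttt)\bigr]_{t_n = 0}$, as claimed.

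For \eqref{multivariate-less-trivial-cdes-des-relation-2} I would instead split $\symm_n$ by the sign of $w_n - w_1$ (the two cases are disjoint and exhaustive since entries are distinct). If $w_n < w_1$ then $n \notin \cDes(w)$, so $\cDes(w) = \cDes(w) \cap [n-1] = \Des(w)$, and this half of $\symm_n^{\cDes}(\ttt)$ is exactly $\bigl[\symm_n^{\cDes}(\ttt)\bigr]_{t_n = 0} = g(\ttt)$. If $w_n > w_1$ then $\cDes(w) = \Des(w) \sqcup \{n\}$; here I would apply the complementation involution $v \mapsto \bar v$ with $\bar v_i := n+1-v_i$, a bijection from $\{v : v_n < v_1\}$ onto $\{w : w_n > w_1\}$ satisfying $\Des(\bar v) = [n-1] \setminus \Des(v)$. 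Since $\Des(\bar v) \sqcup \{n\} = [n] \setminus \Des(v)$, this half equals $\sum_{v : v_n < v_1} \ttt^{[n] \setminus \Des(v)}$, which is $\ttt^{[n]}\, g(\ttt^{-1})$ because $\prod_{i \in [n]} t_i \cdot \prod_{i \in \Des(v)} t_i^{-1} = \prod_{i \in [n] \setminus \Des(v)} t_i$. Adding the two halves gives \eqref{multivariate-less-trivial-cdes-des-relation-2}.

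The hard part is purely the bookkeeping: keeping the two cyclic actions (the shift $p$ on subsets of $[n]$ and the operator $c$ on the variables $t_1, \dots, t_n$) consistently aligned, and verifying that the specialization $t_n = 0$ interacts with the $c^i$ in \eqref{e.def-of-g} as stated — in particular that $c^i$ carries $t_n$ to $t_i$ exactly for $1 \le i \le n-1$, so that precisely one summand is killed and no other information is lost. There is no analytic or representation-theoretic obstacle; the entire proof rests on the position-of-$n$ decomposition of $\symm_n$, the computation $\cDes([v_1,\dots,v_{n-1},n]) = \Des(v) \sqcup \{n\}$, and the complementation involution.
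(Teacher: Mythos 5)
Your proposal is correct and follows essentially the same route as the paper: your decomposition of $\symm_n$ by the position of $n$, with $S_i = p^i(S_n)$ and $S_n \cong \symm_{n-1}$, is exactly the paper's bijection $\symm_n \to \symm_{n-1}\times[n]$ (you just route the key identity $\ttt^{\cDes(w)} = c^i\bigl(t_n\,\ttt^{\Des(v)}\bigr)$ through the equivariance axiom), and your complementation involution $v \mapsto \bar v$ is the paper's map $w \mapsto w_0w$ with $\cDes(w_0w) = [n]\setminus\cDes(w)$. The derivation of \eqref{e.def-of-g} by setting $t_n = 0$ in \eqref{multivariate-less-trivial-cdes-des-relation-1} also matches the paper.
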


\begin{proof}
Consider the bijection $\symm_n \rightarrow \symm_{n-1} \times [n]$
sending a permutation $w \in \symm_n$ with $w(i)=n$ 
to the pair $(v,i) \in \symm_{n-1} \times [n]$, 
where $v := (w(i+1), \ldots, w(n), w(1), \ldots, w(i-1))$.
The observation that $\ttt^{\cDes(w)} =c^i \left( t_n\, \ttt^{\Des(v)} \right)$ proves \eqref{multivariate-less-trivial-cdes-des-relation-1}.

Define now $g(\ttt)$ by \eqref{e.def-of-g}; 
the last equality there follows from \eqref{multivariate-less-trivial-cdes-des-relation-1}.
The involution $\symm_n \rightarrow \symm_n$ sending
$w=(w_1,\ldots,w_n)$ to $w_0w:=(n+1-w_1,\ldots,n+1-w_n)$
has the property that $\cDes(w_0w)=[n] \setminus \cDes(w)$,
and thus gives a bijection between the permutations $w \in \symm_n$ with $n \not\in \cDes(w)$ and those with $n \in \cDes(w)$.
Since $\ttt^{\cDes(w_0w)} = \ttt^{[n]} (\ttt^{-1})^{\cDes(w)}$,
\[
\symm_n^\cDes(\ttt) 
= \sum_{\substack{w \in \symm_n:\\ n \not\in \cDes(w)}} \ttt^{\cDes(w)} 
+ \sum_{\substack{w \in \symm_n:\\ n \in \cDes(w)}} \ttt^{\cDes(w)} 
= \sum_{\substack{w \in \symm_n:\\ n \not\in \cDes(w)}} 
\left[ \ttt^{\cDes(w)} + \ttt^{[n]} (\ttt^{-1})^{\cDes(w)} \right]
\]
and this proves \eqref{multivariate-less-trivial-cdes-des-relation-2}.
\end{proof}


\begin{remark}
Formulas \eqref{the-obvious-multivariate-specialization} and \eqref{multivariate-less-trivial-cdes-des-relation-1} imply
the following interesting (and seemingly new) recurrence for the ordinary multivariate Eulerian distribution $\symm_n^\Des(\ttt)$:
\[
\symm_n^\Des(\ttt)=
\left[ 
  \sum_{i=1}^n t_i \cdot c^i \symm_{n-1}^\Des(\ttt)
\right]_{t_n=1}.
\]
\end{remark}

\medskip
One can specialize $\symm_n^{\cDes}(\ttt)$ to a {\em bivariate} generating function
\[
\symm_n^\cdes(t,u):=
 \sum_{w \in \symm_n} t^{\des(w)} u^{\cdes(w) - \des(w)}
\]
by setting $t_1=t_2=\cdots=t_{n-1}:=t$ and $t_n := u$.
The following result generalizes an observation of Fulman~\cite{Fulman} and Petersen \cite{Petersen_paper}.

\begin{proposition}
\label{second-approach-proposition}
For $n \geq 2$ one has
\begin{equation}
\label{first-cdes-gf-eqn}
\symm_n^\cdes(t,u) = t^{n-1} f(t^{-1}) + uf(t)
\quad
\text{ where }
\quad
f(t) := \frac{d}{dt} t \symm_{n-1}^\des(t)
\end{equation}
or, equivalently,
\begin{equation}
\label{second-cdes-gf-eqn}
\symm_n^{\cdes}(t,u)
= \left( nt + (u - t) \frac{d}{dt} t \right) \symm_{n-1}^\des(t).           
\end{equation}
\end{proposition}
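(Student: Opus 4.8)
The plan is to obtain the two (equivalent) formulas by specializing the multivariate identities of Proposition~\ref{multivariate-second-approach-proposition} and combining them with the two scalar specializations already in hand. Throughout, I would abbreviate $A_k(t):=\symm_k^\des(t)$ for the ordinary Eulerian polynomial and substitute $t_1=\dots=t_{n-1}=t$, $t_n=u$ into the multivariate generating functions of the previous proposition.

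First I would split off the $t_n$-dependence. Since $\cdes(w)-\des(w)\in\{0,1\}$ — with the value $1$ occurring exactly when $n\in\cDes(w)$ — one can write
\[
\symm_n^\cdes(t,u)=A(t)+u\,B(t),
\qquad
A(t):=\!\!\sum_{\substack{w\in\symm_n\\ n\notin\cDes(w)}}\!\!t^{\des(w)},
\quad
B(t):=\!\!\sum_{\substack{w\in\symm_n\\ n\in\cDes(w)}}\!\!t^{\des(w)}.
\]
Under the chosen specialization the polynomial $g$ of \eqref{e.def-of-g} becomes $A(t)$ (because $\cdes(w)=\des(w)$ when $n\notin\cDes(w)$), while $\ttt^{[n]}$ becomes $t^{n-1}u$ and $g(\ttt^{-1})$ becomes $A(1/t)$ (recall $g$ does not involve $t_n$). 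Hence \eqref{multivariate-less-trivial-cdes-des-relation-2} specializes to the single relation $B(t)=t^{n-1}A(1/t)$; equivalently, this records the effect of the sign-reversing involution $w\mapsto w_0w$ used in the proof of Proposition~\ref{multivariate-second-approach-proposition}.

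Next I would evaluate $\symm_n^\cdes(t,u)=A(t)+uB(t)$ at $u=1$ and at $u=t$. At $u=1$ this is $A(t)+B(t)=\symm_n^\des(t)=A_n(t)$. At $u=t$ it is $A(t)+tB(t)=\symm_n^\cdes(t)$, which by \eqref{cyclic-Carlitz-formula} of Corollary~\ref{cor:cdes_gf_skew} equals $nt\,A_{n-1}(t)$. Subtracting gives $(t-1)B(t)=nt\,A_{n-1}(t)-A_n(t)$. I would then invoke the classical recurrence for Eulerian polynomials,
\[
A_n(t)=\bigl(1+(n-1)t\bigr)A_{n-1}(t)+t(1-t)A_{n-1}'(t),
\]
(which itself follows from Carlitz's formula \eqref{Carlitz-formula}), to compute
\[
nt\,A_{n-1}(t)-A_n(t)=(t-1)A_{n-1}(t)+t(t-1)A_{n-1}'(t)
=(t-1)\frac{d}{dt}\bigl(t\,A_{n-1}(t)\bigr)=(t-1)f(t).
\]
Therefore $B(t)=f(t)$, and then $A(t)=nt\,A_{n-1}(t)-t\,f(t)$. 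Substituting these back into $\symm_n^\cdes(t,u)=A(t)+uB(t)$ gives $nt\,A_{n-1}(t)+(u-t)f(t)=\bigl(nt+(u-t)\tfrac{d}{dt}t\bigr)A_{n-1}(t)$, which is \eqref{second-cdes-gf-eqn}; and substituting instead $A(t)=t^{n-1}B(1/t)=t^{n-1}f(1/t)$ (from $B(t)=t^{n-1}A(1/t)$ with $t\mapsto 1/t$) gives $t^{n-1}f(1/t)+uf(t)$, which is \eqref{first-cdes-gf-eqn}.

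The routine part is the bookkeeping of the specialization; the one genuine external input is the Eulerian recurrence, which is what collapses $nt\,A_{n-1}-A_n$ into the tidy expression $(t-1)f(t)$, so I would be careful to either cite it or derive it from \eqref{Carlitz-formula}. A minor point needing attention is that the passage between the two equivalent forms \eqref{first-cdes-gf-eqn} and \eqref{second-cdes-gf-eqn} tacitly uses the palindromicity of $A_{n-1}$; but this is already encoded in the relation $B(t)=t^{n-1}A(1/t)$ obtained in the first step, so once $B=f$ is known no further manipulation is required.
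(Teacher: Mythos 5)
Your argument is correct, and it reaches the result by a somewhat different route than the paper. Both proofs start from the same decomposition: you write $\symm_n^\cdes(t,u)=A(t)+uB(t)$ and specialize \eqref{multivariate-less-trivial-cdes-des-relation-2} to get $B(t)=t^{n-1}A(1/t)$, which is exactly what the paper does (with $A(t)=g(t)$). The divergence is in how the unknown polynomial is pinned down. The paper evaluates $g(t)$ directly from the second equality in \eqref{e.def-of-g}, i.e.\ from the bijection $\symm_n \to \symm_{n-1}\times[n]$ behind \eqref{multivariate-less-trivial-cdes-des-relation-1}: setting $t_n=0$ kills all but the $n-1-\des(v)$ cyclic shifts indexed by ascents of $v\in\symm_{n-1}$, giving $g(t)=\sum_v (n-1-\des(v))t^{\des(v)+1}$ and then $f(t)=t^{n-1}g(1/t)=\frac{d}{dt}t\,\symm_{n-1}^\des(t)$ by the complementation symmetry of the Eulerian distribution. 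You instead determine $A$ and $B$ from two scalar specializations, $u=1$ (trivially $A+B=\symm_n^\des(t)$) and $u=t$ (where you invoke the cyclic Carlitz formula \eqref{cyclic-Carlitz-formula} of Corollary~\ref{cor:cdes_gf_skew}), and then collapse $nt\,\symm_{n-1}^\des(t)-\symm_n^\des(t)$ to $(t-1)f(t)$ via the classical Eulerian recurrence. This is non-circular, since Corollary~\ref{cor:cdes_gf_skew} rests only on Lemma~\ref{lem:cdes-des-distribution-relation} and \eqref{skew-tableau-des-distribution}, not on the proposition being proved; and cancelling the factor $(t-1)$ is harmless in $\QQ[t]$. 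The trade-off: your route avoids the combinatorial bookkeeping over cyclic shifts but imports the Eulerian recurrence as an external fact (so, as you note, you should either cite it or derive it from \eqref{Carlitz-formula}), whereas the paper's computation of $g(t)$ is self-contained and stays entirely within its own multivariate framework. Your closing observation is also accurate: the passage between \eqref{second-cdes-gf-eqn} and \eqref{first-cdes-gf-eqn} needs only the relation $A(t)=t^{n-1}B(1/t)$ coming from the involution $w\mapsto w_0w$, so no separate appeal to palindromicity of $\symm_{n-1}^\des(t)$ is required.
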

\begin{proof}
Specializing \eqref{multivariate-less-trivial-cdes-des-relation-2} gives
\[
\symm_n^\cDes(t,u) 
= [\symm_n^\cDes(\ttt)]_{t_1=\cdots=t_{n-1}=t,\, t_n=u}
= g(t) \,\, + \,\, u t^{n-1} g(t^{-1}),
\]
where $g(t)$ is obtained by specializing \eqref{e.def-of-g}:
\[
\begin{aligned}
g(t) 
&= [g(\ttt)]_{t_1=\cdots=t_{n-1}=t} 
= \left[ \sum_{i=1}^{n-1} t_i c^i \symm_{n-1}^{\Des}(\ttt) \right]_{t_1=\cdots=t_{n-1}=t,\, t_n=0} \\
&= \sum_{v \in \symm_{n-1}} \sum_{i=1}^{n-1} 
\left[ t_i c^i \ttt^{\Des(v)} \right]_{t_1=\cdots=t_{n-1}=t,\, t_n=0} 
=
\sum_{v \in \symm_{n-1}} (n-1-\des(v)) \cdot t^{\des(v)+1}.
\end{aligned}
\]
The last equality follows from the fact that, when $v \in \symm_{n-1}$ has $k$ descents, it has $n-1-k$ ascents, and hence exactly $n-1-k$ of the monomials $c^i \left(\ttt^{\Des(v)}\right)$ survive upon setting $t_n=0$. Thus
\[
g(t)=
\sum_{v \in \symm_{n-1}} (n-1-\des(v)) \cdot t^{\des(v)+1}
=t\left( n-1-t\frac{d}{dt}\right) \symm_{n-1}^\des(t)
=t\left( n-\frac{d}{dt} t\right) \symm_{n-1}^\des(t)
\]
and
\[
f(t) 
:= t^{n-1} g(t^{-1}) 
= \sum_{v \in \symm_{n-1}} (n-1-\des(v)) \cdot t^{n-2-\des(v)}
= \sum_{w \in \symm_{n-1}} (\des(w)+1) \cdot t^{\des(w)}
= \frac{d}{dt} t \symm_{n-1}^\des(t),
\]
completing the proof. 
\end{proof}

\begin{remark}
The coefficients of $f(t)=\frac{d}{dt} t \symm_{n-1}^\des(t)$
appear as OEIS entry {\tt A065826}.
\end{remark}

The preceding calculations lead to an exponential
generating function for $\symm_n^\cdes(u,t)$,
generalizing work of Petersen \cite[Proposition 14.4]{Petersen_book}.
Recall that the Eulerian distribution on $\symm_n$ 
\[
\symm_n^{\des}(t):=\sum_{w \in \symm_n} t^{\des(w)}
\]
has the exponential generating function \cite[Theorem 1.6]{Petersen_book}
\begin{equation}
\label{des-exponential-generating-function}
F^\des(x,t) := 
1 + \sum_{n \geq 1} \frac{x^n}{n!} \symm_n^{\des}(t)
= \frac{(1-t)E}{1-t E}
\quad \text{ where } E:=e^{x(1-t)}.
\end{equation}
Using \eqref{second-cdes-gf-eqn} also for $n=0$, so that $\symm_0^\cdes(t) = 1$ implies $\symm_1^\cdes(t,u) = u$,
we wish to find an expression for
\[
\begin{aligned}
F^\cdes(x,t,u)
:=\; & xu + \sum_{n \ge 2} \frac{x^n}{n!} \symm_n^{\cdes}(t,u) \\
=\; & xu + \frac{x^2}{2!}(t+u)+ \\
  & \frac{x^3}{3!}((2t+t^2) + (1+2t)u)+ \\ 
  & \frac{x^4}{4!}((3t+8t^2+t^3) + (1+8t+3t^2)u)+ \\
  & \frac{x^5}{5!}((4t+33t^2+22t^3+t^4) + (1+22t+33t^2+4t^3)u)+ \cdots
\end{aligned}
\]
Define the integral operator $I_x[f(x)] := \int_0^x f(y)dy$.

\begin{corollary}
\[
\begin{aligned}
F^\cdes(x,t,u) 
&= \left[ xt + (u-t) \frac{\partial}{\partial t} t I_x \right] F^\des(x,t) \\
&= \frac{xt(1-t)E}{1-tE} + (u-t) \left[ \frac{(1-xt)E}{1-tE} - \frac{1}{1-t} \right]
\end{aligned}
\]
\end{corollary}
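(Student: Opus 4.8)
The plan is to convert the recurrence \eqref{second-cdes-gf-eqn} directly into an operator identity on exponential generating functions, and then evaluate the resulting closed form using the known formula \eqref{des-exponential-generating-function} for $F^\des(x,t)$.

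First I would note that \eqref{second-cdes-gf-eqn} is valid also for $n=1$ --- it then reads $\symm_1^{\cdes}(t,u)=\bigl(t+(u-t)\tfrac{d}{dt}t\bigr)\symm_0^{\des}(t)=u$, which is precisely the $x^1$-coefficient appearing in the definition of $F^\cdes$. Hence
\[
F^\cdes(x,t,u)=\sum_{n\ge 1}\frac{x^n}{n!}\,\symm_n^{\cdes}(t,u)
=\sum_{n\ge 1}\frac{x^n}{n!}\left(nt+(u-t)\frac{d}{dt}t\right)\symm_{n-1}^{\des}(t),
\]
and I would split the right-hand side into two sums, manipulated coefficientwise in $x$. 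Reindexing $n\mapsto n-1$, the first sum is $t\sum_{n\ge1}\tfrac{x^n}{(n-1)!}\symm_{n-1}^{\des}(t)=xt\,F^\des(x,t)$, while the second is $(u-t)\tfrac{\partial}{\partial t}t\sum_{m\ge0}\tfrac{x^{m+1}}{(m+1)!}\symm_m^{\des}(t)=(u-t)\tfrac{\partial}{\partial t}t\,I_x\!\left[F^\des(x,t)\right]$, using $\int_0^x \tfrac{y^m}{m!}\,dy=\tfrac{x^{m+1}}{(m+1)!}$. This establishes the first displayed equality of the corollary.

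Next I would compute $I_x[F^\des]$ in closed form. With $E=e^{x(1-t)}$, so that $\partial_x E=(1-t)E$, one checks that $\partial_x\log(1-tE)=-t\cdot\tfrac{(1-t)E}{1-tE}=-t\,F^\des(x,t)$; integrating from $0$ to $x$ and using $E|_{x=0}=1$ gives
\[
t\,I_x\!\left[F^\des(x,t)\right]=\log\frac{1-t}{1-tE}.
\]
Then, since $\partial_t E=-xE$, a direct differentiation yields
\[
\frac{\partial}{\partial t}\log\frac{1-t}{1-tE}=\frac{(1-xt)E}{1-tE}-\frac{1}{1-t},
\]
and adding $xt\,F^\des(x,t)=\tfrac{xt(1-t)E}{1-tE}$ produces the second displayed formula of the corollary.

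The argument is essentially a bookkeeping exercise with no deep obstacle. The points that need care are: the interchange of summation with $\partial_t$ and with $I_x$ (legitimate because everything is a formal power series in $x$ and the operations are coefficientwise), keeping the operator order in $\tfrac{\partial}{\partial t}t$ straight (multiply by $t$, \emph{then} differentiate in $t$), and confirming the $n=1$ instance of \eqref{second-cdes-gf-eqn} so that no spurious low-degree correction term appears in the egf identity. I expect the $t$-differentiation of $\log\tfrac{1-t}{1-tE}$ to be the step most prone to sign errors, and I would sanity-check the final expression against the first few Taylor coefficients in $x$ displayed just before the corollary.
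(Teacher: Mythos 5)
Your proposal is correct and follows essentially the same route as the paper: sum the recurrence \eqref{second-cdes-gf-eqn} (extended so the $x^1$-coefficient is $u$) to get the operator identity $F^\cdes = \bigl[xt + (u-t)\tfrac{\partial}{\partial t} t I_x\bigr]F^\des$, then use $t\,I_x[F^\des] = \ln\tfrac{1-t}{1-tE}$ and differentiate in $t$. The only difference is that you spell out the integration and $t$-differentiation steps that the paper compresses into its displayed chain of equalities.
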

\begin{proof}
Using \eqref{second-cdes-gf-eqn},
\[
\begin{aligned}
F^\cdes(x,t,u)
&= \sum_{n \ge 1} \frac{x^n}{n!} \symm_n^{\cdes}(t,u) \\
&= \sum_{n \ge 1} \frac{x^n}{n!} \left[ nt + (u-t) \frac{\partial}{\partial t} t \right] \symm_{n-1}^\des(t) \\
&= \left[ xt + (u-t) \frac{\partial}{\partial t} t I_x \right] 
F^\des(x,t) \\
&= \left[ xt + (u-t) \frac{\partial}{\partial t} I_x t \right] 
\frac{(1-t)E}{1-tE} \\
&= \frac{xt(1-t)E}{1-tE} + (u-t) \frac{\partial}{\partial t} 
\left[ - \ln(1-tE) + \ln(1-t) \right] \\
&= \frac{xt(1-t)E}{1-tE} + (u-t) \left[ \frac{(1-xt)E}{1-tE} - \frac{1}{1-t} \right]
\end{aligned}
\]
\end{proof}

\section{Remarks and questions}
\label{sec:remarks-and-questions}

We close with several remarks and questions raised by this work.  In some cases, proofs have been
suppressed for the sake of brevity.

\subsection{Exceptional (Escher) cyclic extensions}\label{subsec:Escher}

We now explore the consequences of relaxing the non-Escher condition $\varnothing \subsetneq \cDes(T) \subsetneq [n]$.
In the context of a descent map $\Des: \TTT \longrightarrow 2^{[n-1]}$ on a finite set $\TTT$, define an {\em exceptional} (or {\em Escher}) {\em cyclic extension} of $\Des$ to be a pair $(\wcDes,p)$ satisfying the extension and equivariance axioms of Definition~\ref{def:cDes} but violating the non-Escher axiom, satisfying instead
\[
\begin{array}{rl}
\text{(Escher)}  & (\exists\, T \in \TTT)\, \wcDes(T) \in \{\varnothing, [n]\}.
\end{array}
\]

It is interesting to see how the general results on cyclic extensions $(\cDes,p)$ change in this setting.
For example, assertions (i) and (ii) of Lemma~\ref{t.cyclic_extension} still hold, with the hypothesis $m(\varnothing)=m([n])=0$ replaced by its negation $m(\varnothing)+ m([n]) > 0$.  
However, in assertion (iii), the fiber sizes 
$\wcDes^{-1}(J)$ for subsets $J=\{j_1 < \ldots < j_t\} \subseteq [n]$
are no longer uniquely determined by the fibers sizes of $\Des$;  
one also needs to know $\#\cDes^{-1}(\varnothing)$.
In fact, \eqref{cDes-fiber-sizes-formula} in assertion (iii) becomes
\begin{equation}\label{wcDes-fiber-sizes-formula}
\#\wcDes^{-1}(J) - (-1)^t  \#\wcDes^{-1}(\varnothing) =
\sum_{i=1}^{t} (-1)^{i-1} \#\Des^{-1}(\{j_{i+1} - j_i, \ldots, j_t - j_i\}).
\end{equation}
Lemma~\ref{lem:cdes-des-distribution-relation} still holds, although the generating function $\TTT^\wcdes(t)$ now has a constant term.

We shall study exceptional cyclic extensions, in some more detail, for three important examples: words, permutations, and standard Young tableaux.

\subsubsection{\bf Words}
An example of a set with a natural exceptional cyclic extension is the set $\TTT = [m]^n$ of all words $a=(a_1,\ldots,a_n)$ of length $n$ over an alphabet $[m]$.
Extending the natural definition $\Des(a):=\{i\in [n-1] \,:\, a_i > a_{i+1}\}$, define 
\[
\wcDes(a) := \{i \in [n] \,:\, a_i > a_{i+1}\} \quad
\text{ where } a_{n+1} := a_1. 
\]
All constant words $a=(j,\ldots,j)$ have $\wcDes(a)=\varnothing$.   
One can use Theorem~\ref{prop25} to prove the following.

\begin{corollary}
\label{cor:words-distribution}
For $n \geq  2$, denoting $p := \min(m,n)$, one has
\[
\begin{aligned}
\sum_{a \in [m]^n} \ttt^{\wcDes(a)}
&= m + 
\sum_{\substack{\text{\rm non-hook} \\ \lambda \vdash n}} s_\lambda(1^m) \sum_{T\in \SYT(\lambda)} \ttt^{\cDes(T)} \\
&\qquad \qquad \qquad +
\sum_{k=1}^{p-1} \sum_{t=k+1}^{p} 
   \binom{t-2}{k-1} \binom{m}{t} \binom{n-1}{t-1}
   \sum_{T \in \SYT((1^k) \oplus (n-k))} \ttt^{\cDes(T)}.
\end{aligned}
\]
\end{corollary}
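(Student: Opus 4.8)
The plan is to reduce the statement to Theorem~\ref{prop25} by decomposing $[m]^n$ into constant words and content classes, and identifying each content class with the standard tableaux of a horizontal strip shape. First I would isolate the constant words: a word $a=(a_1,\ldots,a_n)\in[m]^n$ has $\wcDes(a)=\varnothing$ precisely when $a_1\le a_2\le\cdots\le a_n\le a_1$, i.e.\ when $a$ is constant, so the $m$ constant words account exactly for the summand $m$ (that is, $m\,\ttt^{\varnothing}$) on the right-hand side, and no non-constant word contributes to the coefficient of $\ttt^{\varnothing}$.

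For a non-constant word $a$, let $\gamma=(c_1,\ldots,c_m)$ be its content, $c_i:=\#\{j:a_j=i\}$, let $t=t(\gamma):=\#\{i:c_i>0\}\ge 2$, and write $\alpha=\alpha(\gamma)$ for the strict composition of $n$ obtained from $\gamma$ by deleting the zero parts. I would use the bijection between words of content $\gamma$ and $\SYT(\alpha^\oplus)$ that sends $a$ to the tableau $T$ whose $j$-th row from the bottom lists, in increasing order, the positions of the $j$-th smallest letter occurring in $a$; this is well defined and bijective because consecutive rows of $\alpha^\oplus$ occupy disjoint columns. Unwinding the definition of $\Des$ on words and of $\cDes$ on $\SYT(\alpha^\oplus)$ (Subsection~\ref{known-examples-subsection}), one checks that for every $p\in[n]$ one has $p\in\cDes(T)\iff a_p>a_{p+1}$ (with $a_{n+1}:=a_1$), i.e.\ $\wcDes(a)=\cDes(T)$. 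Applying Theorem~\ref{prop25}, available since $t\ge 2$, to each content class and summing over $\gamma$ yields
\[
\sum_{a\in[m]^n}\ttt^{\wcDes(a)}
= m+\sum_{\gamma}\left[\sum_{\text{non-hook }\lambda\vdash n} K_{\lambda,\alpha(\gamma)}\sum_{T\in\SYT(\lambda)}\ttt^{\cDes(T)}
+\sum_{k=1}^{t(\gamma)-1}\binom{t(\gamma)-2}{k-1}\sum_{T\in\SYT((1^k)\oplus(n-k))}\ttt^{\cDes(T)}\right],
\]
where $\gamma$ runs over all weak compositions of $n$ of length $m$ with at least two nonzero parts.

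It then remains to evaluate the two resulting sums over $\gamma$. For the non-hook part I would use that $K_{\lambda,\alpha(\gamma)}=K_{\lambda,\gamma}=\langle s_\lambda,h_\gamma\rangle$ is insensitive to reordering parts or inserting zero parts, and that for non-hook $\lambda$ the compositions $\gamma$ with a single nonzero part contribute $K_{\lambda,(n)}=\langle s_\lambda,s_{(n)}\rangle=0$; hence $\sum_{\gamma:\,t(\gamma)\ge 2}K_{\lambda,\gamma}=\sum_{\gamma}K_{\lambda,\gamma}=s_\lambda(1^m)$, giving the first sum in the corollary. For the hook part I would reorganize the double sum according to the value $t=t(\gamma)$: the number of weak compositions of $n$ of length $m$ with exactly $t$ nonzero parts is $\binom{m}{t}\binom{n-1}{t-1}$ (choose the $t$ occupied coordinates, then a strict composition of $n$ into $t$ parts), and this index satisfies $2\le t\le p:=\min(m,n)$. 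Collecting, for each $k$, the contributions of all $\gamma$ with $t(\gamma)>k$ produces the coefficient $\sum_{t=k+1}^{p}\binom{t-2}{k-1}\binom{m}{t}\binom{n-1}{t-1}$ of $\sum_{T\in\SYT((1^k)\oplus(n-k))}\ttt^{\cDes(T)}$, with $k$ running from $1$ to $p-1$, which is exactly the last sum in the corollary.

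All steps are elementary. The only point needing genuine care is the verification that the word–tableau bijection intertwines $\wcDes$ with $\cDes$ — a direct but essential translation of the two definitions — together with the bookkeeping ensuring that the constraints ``$t\ge 2$'' and ``$t\le\min(m,n)$'' match the stated summation bounds; I do not anticipate a real obstacle beyond this.
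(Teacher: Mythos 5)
Your proposal is correct and follows essentially the same route as the paper: split off the $m$ constant words, biject each remaining content class with $\SYT(\alpha^\oplus)$ in a $\cDes$-preserving way, apply Theorem~\ref{prop25}, and then evaluate the sums over contents via $\sum_\gamma K_{\lambda,\gamma}=s_\lambda(1^m)$ (the paper justifies this by the same semistandard-tableau count, noting $K_{\lambda,(n)}=0$ for $\lambda\ne(n)$) and the count $\binom{m}{t}\binom{n-1}{t-1}$ of contents with exactly $t$ nonzero parts. No substantive difference from the paper's argument.
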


\begin{proof}
Define a map $\varphi$ from $[m]^n$ to the set of all SYT of shapes which are horizontal strips of size $n$, with $m$ (possibly empty) rows, as follows: 
for each $a = (a_1, \ldots, a_n) \in [m]^n$ and $1 \le j \le m$, 
the letters in the $j$-th row (from the bottom up) of $\varphi(a)$ are $\{i \,:\, a_i = j\}$.
The map $\varphi$ clearly preserves cyclic descent sets (defined, for SYT of horizontal strip shape, as in Subsection~\ref{known-examples-subsection} above). The content vector $c = (c_1, \ldots, c_m)$ of $a$, defined by $c_j := \# \{i \,:\, a_i = j\}$ $(1 \le j \le m)$, consists of nonnegative integers summing up to $n$. Removing the entries equal to zero gives a composition $\alpha \models n$ with $1 \le \ell(\alpha) \le m$ parts. 
Noting that $\ell(\alpha) = 1 \iff a \text{ is a constant word}$, for which $\wcDes(a) = 0$, we deduce that
\[
\sum_{a \in [m]^n}
\ttt^{\wcDes(a)}
= m +
\sum_{t=2}^{m} \binom{m}{t} \sum_{\substack{\alpha \models n \\ \ell(\alpha)=t}} \sum_{T \in \SYT(\alpha^\oplus)} \ttt^{\cDes(T)}.
\]
By Theorem~\ref{prop25}, this is equal to
\[
m +
\sum_{t=2}^m \binom{m}{t} \sum_{\substack{\alpha \models n \\\ell(\alpha)=t}}
\left[ \sum_{\substack{\text{non-hook}\\ \lambda \vdash n}}
K_{\lambda,\alpha} \sum_{T \in \SYT(\lambda)} {\ttt}^{\cDes(T)}
+ \, \sum_{k=1}^{t-1} \binom{t-2}{k-1} \sum_{T\in \SYT((1^k) \oplus (n-k))} \ttt^{\cDes(T)} \right].
\]
The identities%
\footnote{To verify the first identity notice that the RHS is equal to the number of semistandard tableaux of shape $\lambda$ with letters from $[m]$, 
whereas the LHS enumerates these tableaux according to the number $t$ of letters which actually appear in the tableau ($t>1$ since $\lambda \ne (n)$) and by its content vector $\alpha$. 
}
\[
\sum_{t=2}^{m} \binom{m}{t} \sum_{\substack{\alpha \models n \\ \ell(\alpha)=t}} K_{\lambda,\alpha} = s_\lambda(1^m)
\qquad ((n) \ne \lambda \vdash n)
\]
and
\[
\# \{\alpha \models n \,:\, \ell(\alpha) = t\} 
= \binom{n-1}{t-1}
\]
complete the proof.
\end{proof}

\begin{remark}
One can define, alternatively, $\Des(a):=\{i\in [n-1] \,:\, a_i \ge a_{i+1}\}$ and extend it to
\[
\wcDes(a) := \{i \in [n] \,:\, a_i \ge a_{i+1}\} \quad
\text{ where } a_{n+1} := a_1.
\]
In that case, the constant words $a=(j,\ldots,j)$ have $\wcDes(a)=[n]$. The derivation of a suitable analogue of Corollary~\ref{cor:words-distribution} is left to the reader.
\end{remark}


\subsubsection{\bf Permutations}
A rather surprising example is the symmetric group $\TTT=\symm_n$ for $n$ even. In order to define an Escher cyclic extension of the usual $\Des$, which differs only slightly from Cellini's non-Escher extension, 
recall the definitions of {\em layered}~\cite[Definition 4.67]{Bona} and {\em colayered}~\cite{BS, ER3} permutations.

\begin{defn}
Let $1 \le k \le n$ be an integer.
A permutation $\pi \in \symm_n$ is {\em $k$-layered}
if there exist integers $0 = q_0 < q_1 < \ldots < q_{k-1} < q_k = n$
such that 
\[
\pi = \left[
q_1, q_1-1, \ldots, q_0+1, \quad
q_2, q_2-1, \ldots, q_1+1, \quad \cdots, \quad 
q_k, q_k-1, \ldots, q_{k-1}+1 
\right] ;
\] 
and $\pi\in \symm_n$ is {\em $k$-colayered} if the reverse partition $\pi^r = (\pi(n), \ldots, \pi(1))$ is $k$-layered.
\end{defn}

\begin{defn}\label{def:wcDes_for_even_symm}
For even $n$, let $\cDes : \symm_n \to 2^{[n]}$ be Cellini's cyclic descent map, as in \eqref{e.cellini}.
Define  $\wcDes : \symm_n \to 2^{[n]}$ as follows: 
\[
\wcDes(\pi):=\begin{cases}
\cDes(\pi) \setminus \{n\} = \Des(\pi), 
& \text{ if $\pi$ is $k$-layered with $k$ even} ;\\
\cDes(\pi) \sqcup \{n\} = \Des(\pi) \sqcup \{n\}, 
& \text{ if $\pi$ is $k$-colayered with $k$ even} ;\\
\cDes(\pi), 
& \text{ otherwise.}
\end{cases}
\]
\end{defn}

\begin{example}
For $n=2$, the identity permutation $12$ is 2-layered (and 1-colayered) while $21$ is 2-colayered (and 1-layered); by definition, $\wcDes(12)=\varnothing$ while $\wcDes(21)=\{1,2\}$. 
For $n=4$, 
the even-layered permutations are $1234, 1432, 2143$ and $3214$,
with corresponding $\wcDes$-values
$\varnothing, \{2,3\}, \{1,3\}$ and $\{1,2\}$. 
The even-colayered permutations are $4321,4123,3412$ and $2341$,
with corresponding $\wcDes$-values
$\{1,2,3,4\}, \{1,4\}, \{2,4\}$ and $\{3,4\}$.
\end{example}

It is not hard to prove the following.

\begin{proposition}\label{t:symm_exceptional}
For even $n$, Definition~\ref{def:wcDes_for_even_symm} gives an exceptional cyclic extension $(\wcDes,p)$ of the usual descent map $\Des$ on $\symm_n$, with the same cyclic map $p$ as in Cellini's extension $(\cDes,p)$.
The fiber sizes satisfy
\begin{equation}
\label{eq:nonstandard}
\#\wcDes^{-1}(J) 
=  \#\cDes^{-1}(J) + (-1)^{\#J} \qquad (\forall J \subseteq [n]).
\end{equation}
In particular, $\#\wcDes^{-1}(\varnothing) = \#\wcDes^{-1}([n]) = 1$. 
\end{proposition}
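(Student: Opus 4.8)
The plan is to verify in turn that $\wcDes$ is well defined, that the pair $(\wcDes,p)$ with $p$ Cellini's rotation satisfies the extension and equivariance axioms of Definition~\ref{def:cDes} while violating non-Escher, and that the fiber identity \eqref{eq:nonstandard} holds. Of these, well-definedness, extension and the Escher property are quick, equivariance is where the real work lies, and the fiber identity reduces to a count. For \emph{well-definedness} I would first record that a layered permutation has a \emph{unique} layer decomposition, the breakpoints being exactly the indices $q$ with $\{\pi_1,\dots,\pi_q\}=\{1,\dots,q\}$; hence the number of layers is an honest invariant of $\pi$, and likewise for colayered permutations. Next I would check that no permutation is simultaneously $k$-layered with $k\ge2$ even and $k'$-colayered with $k'\ge2$ even: writing $q_1,p_1$ for the first and last layer sizes in the layered structure and $r_1,p_1'$ for those in the colayered structure, the layered structure forces the values $\{1,\dots,q_1\}$ into the first $q_1$ positions and the values $\{n-p_1+1,\dots,n\}$ into the last $p_1$ positions, while the colayered structure forces $\{1,\dots,r_1\}$ into the last $r_1$ positions and $\{n-p_1'+1,\dots,n\}$ into the first $p_1'$ positions; disjointness of positions forces $q_1+r_1\ge n+1$ and $p_1+p_1'\ge n+1$, whereas $q_1+p_1\le n$ and $r_1+p_1'\le n$, and adding the four inequalities gives a contradiction. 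The only permutations that are both layered and colayered with one layer count equal to $1$ are the identity ($n$-layered, $1$-colayered) and $w_0=[n,n-1,\dots,1]$ ($1$-layered, $n$-colayered), and since $n$ is even exactly one case of Definition~\ref{def:wcDes_for_even_symm} applies to each.

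\emph{Extension and Escher} are then immediate. In every case $\wcDes(\pi)\,\triangle\,\cDes(\pi)\subseteq\{n\}$, so $\wcDes(\pi)\cap[n-1]=\cDes(\pi)\cap[n-1]=\Des(\pi)$. For Escher, the identity is $n$-layered with $n$ even, so $\wcDes([1,2,\dots,n])=\Des([1,2,\dots,n])=\varnothing$, and $w_0$ is $n$-colayered with $n$ even, so $\wcDes(w_0)=\Des(w_0)\sqcup\{n\}=[n]$.

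\emph{Equivariance} is the main obstacle. Since Cellini's $\cDes$ is already $p$-equivariant, writing $\wcDes(\pi)=\cDes(\pi)\,\triangle\,(\varepsilon(\pi)\cdot\{n\})$ for a correction bit $\varepsilon(\pi)\in\{0,1\}$, the claim reduces to showing that this $\{n\}$-correction transports correctly under the rotation $p$, which I would check one $p$-orbit at a time. The delicate feature is that $p$ does not preserve the partition of $\symm_n$ into layered, colayered and generic permutations --- iterating $p$ on a layered permutation eventually produces a colayered one --- so the verification becomes a case analysis organized by the position of the entry $n$ in $\pi$ and by the layer/colayer type of $\pi$ and of $p\pi$. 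I expect this to be the crux of the argument and the step requiring the most care.

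\emph{The fiber identity} \eqref{eq:nonstandard} I would obtain by bookkeeping. A layered permutation with $k\ge2$ layers has $n\in\cDes(\pi)$, and a colayered one with $k\ge2$ layers has $n\notin\cDes(\pi)$ (so $\cDes(\pi)=\Des(\pi)$); hence $\wcDes$ differs from $\cDes$ precisely by moving each even-layered $\pi$ from the $\cDes$-fiber over $\Des(\pi)\sqcup\{n\}$ to the $\wcDes$-fiber over $\Des(\pi)$, and each even-colayered $\pi$ in the reverse direction between $\Des(\pi)$ and $\Des(\pi)\sqcup\{n\}$. The map $\pi\mapsto\{q_1,\dots,q_{k-1}\}$ is a bijection from $k$-layered permutations to the $(k-1)$-subsets of $[n-1]$, and such a $\pi$ has $|\Des(\pi)|=n-k$; since $n$ is even this yields a bijection between even-layered permutations and the even-size subsets $I\subseteq[n-1]$ via $\pi\mapsto\Des(\pi)$, and, after reversal, a bijection between even-colayered permutations and the odd-size subsets of $[n-1]$ via $\pi\mapsto\Des(\pi)$. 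Therefore, for $J\subseteq[n-1]$, the number of permutations moved into the fiber over $J$ minus the number moved out equals $[\,|J|\text{ even}\,]-[\,|J|\text{ odd}\,]=(-1)^{|J|}$, while for $J=I\sqcup\{n\}$ the two roles switch and one gets $(-1)^{|I|+1}=(-1)^{|J|}$; this is exactly \eqref{eq:nonstandard}. Specializing to $J=\varnothing$ and $J=[n]$ and invoking the non-Escher property of Cellini's $\cDes$ (so $\#\cDes^{-1}(\varnothing)=\#\cDes^{-1}([n])=0$) gives $\#\wcDes^{-1}(\varnothing)=\#\wcDes^{-1}([n])=1$.
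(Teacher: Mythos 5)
Your handling of well-definedness (no permutation is simultaneously even-layered and even-colayered), of the extension and Escher properties, and of the fiber identity \eqref{eq:nonstandard} via the bijections sending even-layered (resp.\ even-colayered) permutations to the even-size (resp.\ odd-size) subsets of $[n-1]$ through $\pi\mapsto\Des(\pi)$ is correct and complete; since the paper offers no proof of this proposition, these parts stand on their own. The genuine problem is precisely the step you defer. Equivariance for the pair $(\wcDes,p)$ with $p$ Cellini's rotation is not a delicate orbit-by-orbit verification awaiting care --- it is false, so the case analysis you announce as ``the crux'' cannot be completed. Concretely: by the extension axiom the identity is the \emph{unique} permutation with $\wcDes=\varnothing$, and $\varnothing$ is fixed by the shift on $2^{[n]}$, so any bijection making $\wcDes$ equivariant must fix the identity, whereas Cellini's $p$ sends $[1,2,\ldots,n]$ to $[n,1,\ldots,n-1]$. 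Equally concretely, for $n=4$ the Cellini orbit $1234\mapsto 4123\mapsto 3412\mapsto 2341$ carries the $\wcDes$-values $\varnothing,\ \{1,4\},\ \{2,4\},\ \{3,4\}$ (exactly the values listed in the paper's example), and these are not successive shifts of one another; already for $n=2$ the orbit $\{12,21\}$ has values $\varnothing$ and $\{1,2\}$.

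What is true, and what your fiber count essentially establishes, is that the numbers $m(J):=\#\wcDes^{-1}(J)=\#\cDes^{-1}(J)+(-1)^{\#J}$ are nonnegative, constant along shift-orbits of $2^{[n]}$ (by Cellini's equivariance and invariance of $\#J$), and satisfy $m(J)+m(J\sqcup\{n\})=\#\Des^{-1}(J)$; hence, by the Escher variant of Lemma~\ref{t.cyclic_extension}(ii) noted at the start of Subsection~\ref{subsec:Escher}, there exists \emph{some} bijection $p'$ for which $(\wcDes,p')$ is an exceptional cyclic extension with fiber sizes \eqref{eq:nonstandard} --- and any such $p'$ must fix the identity and $w_0$, so it necessarily differs from Cellini's rotation at least on layered/colayered permutations. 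That weaker formulation is all that is needed for Theorem~\ref{conj1_Escher}. So you should either route the equivariance claim through Lemma~\ref{t.cyclic_extension}(ii) (or construct an explicit $p'$), or flag that the statement's parenthetical about reusing Cellini's $p$ cannot be taken literally; as written, your plan for the key step would terminate in a counterexample rather than a proof.
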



Combining Equation \eqref{eq:nonstandard} with Corollaries~\ref{cor:fiber-sizes-as-inner-product}
and~\ref{affine-ribbon-hook-pairing}, one deduces

\begin{corollary}
	\[
	\#\wcDes^{-1}(J) 
	= \langle  s_{(1^n)^\oplus}-s_n, \cs_{\ccomp{n}{J}} \rangle
	\qquad (\forall\ \varnothing \ne J \subseteq [n]).
	\]	
\end{corollary}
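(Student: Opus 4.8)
The plan is to combine the three cited results and a small computation. First I would recall that, by Proposition~\ref{t:symm_exceptional} (Equation~\eqref{eq:nonstandard}), for every $J \subseteq [n]$ one has $\#\wcDes^{-1}(J) = \#\cDes^{-1}(J) + (-1)^{\#J}$. Since $n$ is even, the horizontal strip $(1^n)^\oplus$ is a skew shape with $n \ge 2$ components, hence not a connected ribbon, so Corollary~\ref{cor:fiber-sizes-as-inner-product} applies to $\TTT = \SYT((1^n)^\oplus)$; together with the bijection $\symm_n \to \SYT((1^n)^\oplus)$ from Subsection~\ref{known-examples-subsection} that identifies Cellini's $\cDes$ with the horizontal-strip $\cDes$, this gives $\#\cDes^{-1}(J) = \langle s_{(1^n)^\oplus}, \cs_{\ccomp{n}{J}} \rangle$ for all $\varnothing \ne J \subseteq [n]$.

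Next I would handle the correction term $(-1)^{\#J}$. Writing $t := \#J$, I want to realize $(-1)^{t} = (-1)^{t-1}\cdot(-1)$ as $-\langle s_{(n)}, \cs_{\ccomp{n}{J}} \rangle$, i.e. as $\langle -s_n, \cs_{\ccomp{n}{J}}\rangle$. This is exactly the $k=0$ case of Corollary~\ref{affine-ribbon-hook-pairing}: for $\varnothing \ne J \subseteq [n]$ with $t = \#J$, $\langle \cs_{\ccomp{n}{J}}, s_{(n-k,1^k)} \rangle = (-1)^{t-1-k}$ for $0 \le k \le t-1$, and $(n) = (n-0,1^0)$ is the hook with $k=0$, so $\langle \cs_{\ccomp{n}{J}}, s_n \rangle = (-1)^{t-1}$, whence $\langle \cs_{\ccomp{n}{J}}, -s_n \rangle = (-1)^t$. (Note $t\ge 1$ since $J \ne \varnothing$, so this case is never vacuous.)

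Putting the pieces together by bilinearity of $\langle -,-\rangle$:
\[
\#\wcDes^{-1}(J)
= \langle s_{(1^n)^\oplus}, \cs_{\ccomp{n}{J}} \rangle + (-1)^{\#J}
= \langle s_{(1^n)^\oplus}, \cs_{\ccomp{n}{J}} \rangle - \langle s_n, \cs_{\ccomp{n}{J}} \rangle
= \langle s_{(1^n)^\oplus} - s_n, \cs_{\ccomp{n}{J}} \rangle,
\]
which is the claimed identity. I do not expect any real obstacle here — the statement is essentially a bookkeeping corollary — but the one point to be careful about is correctly invoking the Robinson--Schensted-free bijection $\symm_n \leftrightarrow \SYT((1^n)^\oplus)$ so that Cellini's $\cDes$ on $\symm_n$ is literally the cyclic descent map on the horizontal strip, which is what lets Corollary~\ref{cor:fiber-sizes-as-inner-product} be applied verbatim; this compatibility is already recorded in Subsection~\ref{known-examples-subsection}.
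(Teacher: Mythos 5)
Your proposal is correct and follows exactly the route the paper intends: combine Equation~\eqref{eq:nonstandard} with Corollary~\ref{cor:fiber-sizes-as-inner-product} (via the identification of $\symm_n$ with $\SYT((1^n)^\oplus)$) and the $k=0$ case of Corollary~\ref{affine-ribbon-hook-pairing}, then use bilinearity. Nothing further is needed.
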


Note that this also implies that, for $n$ even,
\[
\symm_n^{\wcDes}(\ttt) - \symm_n^\cDes(\ttt)
= \prod\limits_{i=1}^n (1-t_i).
\]
On $\symm_n$, no exceptional cyclic extensions exist for odd $n > 1$, and all exceptional cyclic extensions have the same fiber sizes for even $n$.
These claims follow from 
Theorem~\ref{conj1_Escher} below.

\subsubsection{\bf Standard Young tableaux}
Our main focus in this paper is on standard Young tableaux. For them,  an ``exceptional'' analogue of Theorem~\ref{conj1} is the following.
\begin{theorem}\label{conj1_Escher}
Let $\lambda/\mu$ be a skew shape of size $n \ge 2$.
The usual descent map $\Des$ on $\SYT(\lambda/\mu)$ has an exceptional cyclic extension $(\wcDes,p)$ if and only if $\lambda/\mu$ has one of the following forms. In each case, all the exceptional cyclic extensions share the same cardinalities $\#\wcDes^{-1}(J)$, for all $J \subseteq [n]$.
\begin{enumerate}
\item
$\lambda/\mu$ has a single row. In that case, $\wcDes(T) = \varnothing$ for the unique $T \in \SYT(\lambda/\mu)$.
\item
$\lambda/\mu$ has a single column. In that case, $\wcDes(T) = [n]$ for the unique $T \in \SYT(\lambda/\mu)$.
\item
The size $n$ is even and $\lambda/\mu$ has $n$ connected components, each of size $1$. $\SYT(\lambda/\mu)$ has a natural bijection with the symmetric group $\symm_n$, and the unique value distribution of exceptional cyclic extensions is given by Proposition \ref{t:symm_exceptional} above.
\end{enumerate}
\end{theorem}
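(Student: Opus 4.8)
The plan is to reduce Theorem~\ref{conj1_Escher} to the relaxed form of Lemma~\ref{t.cyclic_extension} described in Subsection~\ref{subsec:Escher}, together with the fiber-size identity \eqref{wcDes-fiber-sizes-formula}. Given any exceptional cyclic extension $(\wcDes,p)$ of $\Des$ on $\TTT=\SYT(\lambda/\mu)$, set $m(J):=\#\wcDes^{-1}(J)$ and, for $\varnothing\ne J=\{j_1<\dots<j_t\}\subseteq[n]$, write
\[
M_0(J):=\sum_{i=1}^{t}(-1)^{i-1}\#\Des^{-1}(\{j_{i+1}-j_i,\dots,j_t-j_i\}),
\]
so that \eqref{wcDes-fiber-sizes-formula} reads $m(J)=M_0(J)+(-1)^{t}m(\varnothing)$. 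Since $M_0$ depends only on $\Des$, the whole distribution of $(\wcDes,p)$ is governed by the single nonnegative integer $m(\varnothing)$; hence the theorem reduces to deciding, for each shape, whether an admissible value with $m(\varnothing)+m([n])\ge 1$ exists, and then showing it is unique.

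For the ``only if'' direction I would first dispose of connected ribbons. If $\lambda/\mu$ is a connected ribbon of height $h$ with $2\le h\le n-1$, the argument in the ``only if'' part of Theorem~\ref{conj1} applies verbatim: one checks that it invokes only the extension and equivariance axioms and never non-Escher, so $\SYT(\lambda/\mu)$ carries no cyclic extension at all, a fortiori no exceptional one. Thus a connected ribbon admitting an exceptional extension must be the single row $(n)$ or the single column $(1^n)$, i.e.\ case (1) or (2). If $\lambda/\mu$ is \emph{not} a connected ribbon, then by Corollary~\ref{cor:fiber-sizes-as-inner-product} we have $M_0(J)=\langle s_{\lambda/\mu},\cs_{\ccomp{n}{J}}\rangle\ge 0$ for all $\varnothing\ne J\subseteq[n]$, and $M_0([n])=0$ (the computation of $m([n])$ in the proof of Theorem~\ref{conj1}). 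Taking $J=[n]$ gives $m([n])=(-1)^{n}m(\varnothing)$. If $n$ is odd this forces $m(\varnothing)=m([n])=0$, contradicting $m(\varnothing)+m([n])\ge 1$, so no non-ribbon shape admits an exceptional extension. If $n$ is even it gives $m(\varnothing)=m([n])$, so $m(\varnothing)\ge 1$; positivity of all fibers $m(J)=M_0(J)+(-1)^{t}m(\varnothing)$ then forces $M_0(J)\ge m(\varnothing)\ge 1$ whenever $t=\#J$ is odd. Applying this to a singleton $J=\{j\}$, where $M_0(\{j\})=\langle s_{\lambda/\mu},h_{(n)}\rangle=K_{\lambda/\mu,(n)}$, forces $\lambda/\mu$ to be a horizontal strip; applying it to $J=[n-1]$, where (using condition (c) together with $M_0([n])=0$) $M_0([n-1])=\#\Des^{-1}([n-1])=\langle s_{\lambda/\mu},s_{(1^n)}\rangle=\langle s_{(\lambda/\mu)'},h_{(n)}\rangle$, forces $\lambda/\mu$ to be a vertical strip. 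A skew shape that is at once a horizontal and a vertical strip has at most one cell in each row and in each column, so no two of its cells are adjacent; hence it is a disjoint union of $n$ single cells, i.e.\ $(1)^{\oplus n}$. This is case (3).

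For the ``if'' direction, cases (1) and (2) are immediate: if $\TTT$ consists of the unique one-row (resp.\ one-column) tableau $T$, the extension axiom allows only $\wcDes(T)\in\{\varnothing,\{n\}\}$ (resp.\ $\{[n-1],[n]\}$), while equivariance (with $p$ the identity on the singleton $\TTT$) forces $\wcDes(T)$ to be a fixed point of the shift on $2^{[n]}$, hence $\varnothing$ (resp.\ $[n]$); this single choice does satisfy extension and equivariance, violates non-Escher, and is clearly unique. For case (3) I would transport everything along the descent-preserving bijection $\symm_n\cong\SYT((1)^{\oplus n})$: an exceptional extension is then supplied by Proposition~\ref{t:symm_exceptional}, and it remains to pin down its distribution. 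Here $M_0(J)=\#\cDes^{-1}(J)$ is Cellini's distribution, $M_0([n])=0$ by non-Escher for permutations, and $M_0([n-1])=\#\Des^{-1}([n-1])=1$ (only the reverse permutation has full ordinary descent set). As in the ``only if'' analysis, $n$ even gives $m(\varnothing)=m([n])\ge 1$, and $m([n-1])=M_0([n-1])-m(\varnothing)=1-m(\varnothing)\ge 0$ forces $m(\varnothing)=1$. Hence every exceptional extension has $\#\wcDes^{-1}(J)=\#\cDes^{-1}(J)+(-1)^{\#J}$ for $\varnothing\ne J\subseteq[n]$ and $\#\wcDes^{-1}(\varnothing)=1$, which is exactly \eqref{eq:nonstandard}.

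I expect the main obstacle to be the $n$-even, non-ribbon case of the ``only if'' direction: one must recognize that the two \emph{a priori} unrelated inequalities coming from $J=\{j\}$ and $J=[n-1]$ together constrain $\lambda/\mu$ to be simultaneously a horizontal and a vertical strip, hence $(1)^{\oplus n}$, and then verify that the resulting forced value $m(\varnothing)=1$ is consistent with the explicit distribution of Proposition~\ref{t:symm_exceptional}. A smaller but essential point is the verification that the connected-ribbon argument in the proof of Theorem~\ref{conj1} nowhere invokes the non-Escher axiom, so that it transfers unchanged to exceptional extensions.
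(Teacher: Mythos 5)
Your proposal is correct, and it reaches the classification by a noticeably different route than the paper, although both arguments pivot on the same relation \eqref{wcDes-fiber-sizes-formula} and both invoke Proposition~\ref{t:symm_exceptional} for existence in case (3). The paper assumes (without loss of generality) that some $T_0$ has $\wcDes(T_0)=\varnothing$, deduces from $\Des(T_0)=\varnothing$ that every component of $\lambda/\mu$ is a single row, computes $\#\Des^{-1}([t-1])=\binom{m-1}{t-1}$ directly on such horizontal strips, and then uses alternating binomial identities at $t=n$ and $t=n-1$ to force $n$ even and $m=n$; this is entirely self-contained and never needs the ribbon-exclusion argument of Theorem~\ref{conj1}, since a connected ribbon of height between $2$ and $n-1$ is already ruled out by the horizontal/vertical strip deduction. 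You instead split into connected ribbons versus non-ribbons, observe (correctly) that the ``only if'' argument of Theorem~\ref{conj1} for ribbons of intermediate height uses only extension and equivariance, and for non-ribbon shapes you import $M_0(J)=\langle s_{\lambda/\mu},\cs_{\ccomp{n}{J}}\rangle$ and $M_0([n])=0$ from Corollary~\ref{cor:fiber-sizes-as-inner-product}, Lemma~\ref{t.cyclic_extension}(iii) and the computation in the proof of Theorem~\ref{conj1}, then constrain the shape via the evaluations at singletons ($\langle s_{\lambda/\mu},h_{(n)}\rangle$, forcing a horizontal strip) and at $[n-1]$ ($\langle s_{\lambda/\mu},s_{(1^n)}\rangle$, forcing a vertical strip). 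What your approach buys is a symmetric treatment of the two Escher values (the identity $m([n])=(-1)^n m(\varnothing)$ handles $\varnothing$ and $[n]$ simultaneously, where the paper says ``analogous and left to the reader'') and a cleaner conceptual reduction to inner products; what it costs is reliance on the symmetric-function machinery already developed for Theorem~\ref{conj1}, where the paper's argument is an elementary counting argument on horizontal strips. All the individual steps check out: the derivation $m(\varnothing)=m([n])\geq 1$ for even $n$, the forcing of $M_0(J)\geq m(\varnothing)$ for odd $\#J$, the identification of a simultaneous horizontal and vertical strip with $(1)^{\oplus n}$, the pinning down $m(\varnothing)=1$ via $m([n-1])=1-m(\varnothing)\geq 0$ (recovering \eqref{eq:nonstandard}), and the singleton-set analysis in cases (1) and (2).
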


\begin{remark}
For $n=1$, the unique $T \in \SYT(\lambda/\mu)$ has $\Des(T) = \varnothing$. In this case there are two distinct exceptional cyclic extensions, one with $\wcDes(T) = \varnothing$ and the other with $\wcDes(T) = [1]$.
\end{remark}

\begin{proof}[Proof of Theorem~\ref{conj1_Escher}.]
Existence, in the last case, follows from Proposition~\ref{t:symm_exceptional}, and is obvious for the other cases.
It remains to show uniqueness, including the claim that an exceptional cyclic extension does not exist in any other case.

Indeed, assume that $(\wcDes,p)$ is an exceptional cyclic extension of the usual descent map $\Des$ on $\SYT(\lambda/\mu)$, for a skew shape $\lambda/\mu$ of size $n \ge 2$. Assume that there is a tableau $T_0 \in \SYT(\lambda/\mu)$ with $\wcDes(T_0) = \varnothing$;
the treatment of the case $\wcDes(T_0) = [n]$ is analogous and is left to the reader. Of course, necessarily $\Des(T_0) = \varnothing$ by the extension axiom, and this implies that each connected component of $\lambda/\mu$ consists of a single row.
If there is only one connected component, we get the first case of the theorem. Assume, therefore, that there are $m \ge 2$ connected components.

For each $1 \le t \le n$, consider the set of all $T \in \SYT(\lambda/\mu)$ with $\Des(T) = [t-1]$. In such a tableau, the entries $1, \ldots, t$ appear in distinct components (rows), in descending order, with entry $t$ at the last (southernmost) row. Each of these entries occupies the first (westernmost) cell in its row. The rest of the tableau must be filled in a unique fashion, and it follows that
\[
\#\Des^{-1}([t-1]) = \binom{m-1}{t-1} \qquad (1 \le t \le n).
\]
of course, this number is zero unless $1 \le t \le m$.
It follows from \eqref{wcDes-fiber-sizes-formula} that
\[
\begin{aligned}
\#\wcDes^{-1}([t]) - (-1)^t \#\wcDes^{-1}(\varnothing) 
&= \sum_{i=1}^{t} (-1)^{i-1} \#\Des^{-1}([t-i]) \\
&= \sum_{j=1}^{t} (-1)^{t-j} \#\Des^{-1}([j-1])
= 0 \qquad (\forall t \ge m).
\end{aligned}
\]
In particular, for $t = n$,
\[
\#\wcDes^{-1}([n]) = (-1)^n \#\wcDes^{-1}(\varnothing).
\]
Together with our assumption $\#\wcDes^{-1}(\varnothing) >0$, this
implies that $n$ is even; and if $m <n$ then, for $t = n-1$,
\[
\#\wcDes^{-1}([n-1]) = (-1)^{n-1} \#\wcDes^{-1}(\varnothing) < 0
\]
gives a contradiction. Thus $m=n$, each component consists of a single cell, and we are in the third case of the theorem. 
Since $\#\Des^{-1}(\varnothing) = 1$ and $\#\wcDes^{-1}(\varnothing) > 0$, necessarily $\#\wcDes^{-1}(\varnothing) = 1$.
\eqref{wcDes-fiber-sizes-formula} shows that the distribution of $\wcDes$ values is unique.
\end{proof}

\subsection{Topological interpretation of affine ribbon Schur functions}
\label{Steinberg-torus-section}

The alternating sum definition \eqref{affine-ribbon-Schur-function} of the affine ribbon Schur function 
$\cs_{\ccomp{n}{J}}:= \sum_{\varnothing \neq I \subseteq J}
(-1)^{\#(J \setminus I)} h_{\ccomp{n}{I}}$ has a topological interpretation,
as the (Frobenius image of) a certain virtual Euler characteristic representation of $\symm_n$.
In particular, the special case $t=n$ of \eqref{e.cyclic-ribbon-expansion},
\[
\label{Steinberg-torus-Euler-Poincare}
\sum_{\varnothing \neq I \subseteq [n]} (-1)^{n-\#I}h_{\ccomp{n}{I}} = \cs_{\ccomp{n}{[n]}} = \sum_{i=0}^{n-1} (-1)^{n-1-i} s_{(n-i,1^i)},
\]
is the {\em Euler-Poincar\'e relation} for the (ordinary, non-reduced) homology of the (type $A_{n-1}$) {\em Steinberg torus} considered in \cite{DPS}.

We first recall the known topological interpretation for the ribbon skew Schur function in terms of the {\em type $A_{n-1}$ Coxeter complex}.
This is a simplicial complex $\Delta$ triangulating an $(n-2)$-dimensional sphere, with a simply transitive action of $\symm_n$ on its maximal simplices.
It also has a {\em balanced} coloring of its vertices by $[n-1]$:  each maximal simplex has exactly one vertex of each color.
Furthermore, for each $J \subseteq [n-1]$, the group $\symm_n$ acts transitively on the simplices whose vertices have color set $J$,
but now with $\symm_n$-stabilizers conjugate to the Young subgroup $\symm_{\comp{n}{J}}$ associated to the composition $\comp{n}{J}$.  Thus the permutation representation of $\symm_n$ on simplices of color set $J$ has image, under the {\it Frobenius characteristic map} $\ch$, equal to the symmetric function $h_{\comp{n}{J}}$.  
This completely describes the action of $\symm_n$ on the simplices of $\Delta$.

On the homological side, since $\Delta$ triangulates a sphere, it is {\em Cohen-Macaulay}, and hence has only top-dimensional (reduced) homology.
This Cohen-Macaulay property is also inherited, for each subset $J \subseteq [n-1]$, by the {\em type-selected subcomplex} $\Delta_J$ consisting of the simplices that only use vertices whose colors lie in $J$. 
The Euler-Poincar\'e relation for $\Delta_J$ says that
\[
\sum_{i \geq -1} (-1)^i \ch( \tilde{H}_i(\Delta_J) )
= \sum_{i \geq -1} (-1)^i \ch( \tilde{C}_i(\Delta_J) )
\]
where $\tilde{C}_i, \tilde{H}_i$ are (augmented/reduced) chain
and homology groups, taken with rational coefficients.
By the above discussion this gives
\[
(-1)^{\#J-1} \ch ( \tilde{H}_{\#J-1}(\Delta_J) )
=\sum_{I \subseteq J} (-1)^{\#I -1} h_{\comp{n}{I}}
\]
Re-writing this last line gives a well-known homological re-interpretation \cite[\S 6]{Bjorner}, \cite{Solomon}, \cite[\S 4]{Stanley} of $s_{\comp{n}{J}}$:
\[
\ch (\tilde{H}_{\#J-1}(\Delta_J))
=\sum_{I \subseteq J} (-1)^{\#(J \setminus I)} h_{\comp{n}{I}}
= s_{\comp{n}{J}}.
\]

We wish to similarly re-interpret, for $\varnothing \neq J \subseteq [n]$, the affine ribbon skew Schur function $\cs_{\ccomp{n}{J}}$
in terms of the type $A_{n-1}$ case of what Dilks, Petersen and Stembridge \cite{DPS} call the {\em Steinberg torus}.
This is a regular cell complex which we shall denote $\SteinbergTorus$.
It is a {\em Boolean cell complex}: all lower intervals in the partial ordering of cells are Boolean algebras, so that the cells are essentially simplices, but their intersections are not necessarily common faces.
It triangulates an $(n-1)$-dimensional torus, with a simply transitive action of $\symm_n$ on the maximal cells.
It also has a balanced coloring of its vertices by $[n]$, so each maximal simplex has exactly one vertex of each color.
Furthermore, for $\varnothing \neq J \subseteq [n]$, the group $\symm_n$ again acts transitively on the set of all cells whose vertices have color set $J$,
but this time with $\symm_n$-stabilizers conjugate to the Young subgroup $\symm_{\ccomp{n}{J}}$ associated to the {\em cyclic} composition $\ccomp{n}{J}$.  
Thus the permutation representation of $\symm_n$ on the cells of color set $J$ has image, under the Frobenius characteristic map $\ch$, equal to the symmetric function $h_{\ccomp{n}{J}}$.  This
describes the action of $\symm_n$ on the simplices of $\SteinbergTorus$.

On the homological side,
since $\SteinbergTorus$ triangulates a torus, it is not Cohen-Macaulay.
In fact, its (non-reduced) cohomology ring $H^*(\SteinbergTorus)$ 
with rational coefficients is isomorphic to an exterior algebra $\wedge V$, where $V = H^1(\SteinbergTorus)$ carries the irreducible reflection representation of $\symm_n$.  Since
$\wedge^i V$ has Frobenius image $\ch(\wedge^i V)=s_{(n-i,1^i)}$,
this describes the $\symm_n$-action on homology.
An analysis via the Euler-Poincar\'e relation, as before, shows that
\[
\sum_{i \geq 0} (-1)^i \ch( C^i(\SteinbergTorus) ) 
= \sum_{i \geq 0} (-1)^i \ch( H^i(\SteinbergTorus) )
\]
which becomes exactly \eqref{Steinberg-torus-Euler-Poincare} by
the above discussion.  
More generally, for each $\varnothing \neq J \subseteq [n]$, the {\em type-selected subcomplex} $\SteinbergTorus_J$ consisting of the cells that only use vertices whose colors lie in $J$ has Euler-Poincar\'e relation
\[
\sum_{i \geq 0} (-1)^i \ch( C^i(\SteinbergTorus_J) ) 
= \sum_{i \geq 0} (-1)^i \ch( H^i(\SteinbergTorus_J) )
\]
giving the re-interpretation
\[
\cs_{\ccomp{n}{J}}
=\sum_{\varnothing \neq I \subseteq J} (-1)^{\#(J \setminus I)} h_{\ccomp{n}{I}}
=\sum_{i \geq 0} (-1)^{\#J-1 - i} \ch (H^i(\SteinbergTorus_J)).
\]



\subsection{Bijective proofs and cyclic sieving}

\subsubsection{\bf Bijective proofs and dihedral group action}
Our proof of the existence of $(\cDes,p)$ in Theorem~\ref{conj1}
is indirect and involves arbitrary choices. It is desired to have a constructive proof,
which will provide an explicit combinatorial definition of the cyclic descent set map.

\begin{problem}
	\label{natural-maps-problem}
	Find a natural, explicit map $\cDes$ and cyclic action $p$ on $\SYT(\lambda/\mu)$ as in Theorem~\ref{conj1}.
\end{problem}

For discussions and solutions for specific shapes, see~\cite{Rhoades, Pechenik, DPS2, AER}.


\medskip

Anders Bj\"orner suggested the problem of finding an explicit dihedral group action
on $\SYT(\lambda/\mu)$ with nice properties. 
More precisely,
recall from Corollary~\ref{cor:fiber-sizes-involution} that the cyclic descent set and its negative are equidistributed over the SYT of any given non-ribbon skew shape.

\begin{problem}\label{involution-problem}
Given a solution of Problem~\ref{natural-maps-problem}, 
find an 
appropriate involution $\iota$ on $\SYT(\lambda/\mu)$ which sends the cyclic descent set to its negative.
\end{problem}

\noindent
One wants $\iota$ to interact well with an explicit cyclic map $p$ which shifts 
the cyclic descent sets, so that they satisfy the relation $\iota p \iota = p^{-1}$.
Letting $\iota$ and $p$ 
be 
evacuation 
and jeu-de-taquin promotion respectively provides a solution for rectangular shapes
~\cite{Rhoades}. 
The general case is wide open.

\medskip

The proof of Theorem~\ref{conj2} is also indirect.

\begin{problem}\label{Schensted-problem}
Find a Robinson-Schensted-style bijective proof of Theorem~\ref{conj2}.
\end{problem}

\subsubsection{\bf Cyclic sieving phenomenon?}
Rhoades \cite{Rhoades} proved that,
for rectangular shapes $\lambda$,
the usual {\em jeu-de-taquin} promotion operator $p: \SYT(\lambda) \rightarrow \SYT(\lambda)$
has order $n:=|\lambda|$ and, for any $k$,
\[
\# \{T \in \SYT(\lambda): p^k(T)=T \}
= \left[ f^\lambda(q) \right]_{q=\zeta^k}\
\]
where $\zeta:=e^{\frac{\pi i}{n}}$, and
\[
f^\lambda(q) 
:= \frac{[n]!_q}{\prod_{x \in \lambda} [h(x)]_q}
= q^{-b(\lambda)} \sum_{T \in \SYT(\lambda)} q^{\operatorname{maj}(T)}
\]
is the usual {\em $q$-hook formula} \cite[Cor. 7.21.5]{EC2}\footnote{
An elegant refinement of this CSP
on rectangular shapes was  conjectured, and most recently proved for two-row shapes, by Ahlbach, Rhoades and Swanson~\cite{ARS}.}.

\begin{problem}
\label{CSP-problem}
For non-hook shapes $\lambda$ besides rectangles,
can one choose the operator $p$ in Theorem~\ref{conj1}
and a polynomial $X(q)$ to replace $f^\lambda(q)$
so that this cyclic sieving phenomenon (CSP)  generalizes?
\end{problem}

Unfortunately, $f^\lambda(q)$ itself will not always work.

\begin{example}
Take $\lambda=(3,2,1)$ as in Example~\ref{non-unique-orbits-example}.
Unfortunately, no matter how one chooses the orbit structure in this example, if one plugs $q=\zeta^2$
with $\zeta:=e^{\frac{2 \pi i}{6}}$ into
$$
f^\lambda(q)= 1+2q+2q^2+3q^3+3q^4+2q^5+2q^6+q^7
$$
one obtains $2(1+\zeta^2)$, instead of a (nonnegative) integer.
\end{example}

On the other hand, the usual jeu-de-taquin promotion operator $p_{jdt}$ on
$\SYT(\lambda)$ is known to have order $N(N-1)=2|\lambda|$
when $\lambda=(N-1,N-2,\ldots,2,1)$, and hence  $p^2_{jdt}$ has order
$|\lambda|$.  Thus one might ask whether
the action of $p^2_{jdt}$ has orbit sizes related to those of $p$
from the cyclic extension $(\cDes,p)$.  For $N=4$, the orbit sizes
of $p_{jdt}$ on $\SYT((3,2,1))$ are $12$ and $4$, so the
orbit sizes of $p^2_{jdt}$ are $6,6,2,2$, which can be consistent with the
$p$-orbits described above.  Unfortunately, neither $p_{jdt}$ nor
$p^2_{jdt}$ makes $\cDes$ equivariant in this case. 


\medskip


A cyclic action on SYT of shape $(k,k,1^{n-2k})$ and a corresponding CSP were introduced by Pechenik~\cite{Pechenik}; see also~\cite{DPS2} and~\cite[\S 2.8]{Iraci}.
This result may be used to define an explicit cyclic descent extension for SYT of this shape.

\medskip

Finally,  
recalling from~\cite{ER1} the cyclic descent extension for $\SYT(\lambda \oplus (1))$, 
Corollary~\ref{cor:fiber-sizes-as-inner-product} in the current paper has been applied 
by Ahlbach, Rhoades, and Swanson in~\cite{ARS}
to obtain a
refined CSP on SYT of these skew shapes.

\end{document}